\numberwithin{equation}{section}
\numberwithin{figure}{section}
\numberwithin{table}{section}
\long\def\MSC#1\EndMSC{\def\arg{#1}\ifx\arg\empty\relax\else
	{\narrower\noindent%
		{2010 Mathematics Subject Classification}: #1\\} \fi}
\long\def\PACS#1\EndPACS{\def\arg{#1}\ifx\arg\empty\relax\else
	{\narrower\noindent%
		{PACS numbers}: #1}\fi}
\long\def\KEY#1\EndKEY{\def\arg{#1}\ifx\arg\empty\relax\else
	{\narrower\noindent%
		Keywords: #1\\}\fi}
\theoremstyle{plain}
\newtheorem{theorem}{Theorem}[section]
\newtheorem{lemma}[theorem]{Lemma}
\newtheorem{proposition}[theorem]{Proposition}
\newtheorem{corollary}[theorem]{Corollary}
\theoremstyle{definition}
\newtheorem{definition}[theorem]{Definition}
\newtheorem{assumption}[theorem]{Assumption}
\theoremstyle{remark}
\newtheorem{remark}[theorem]{Remark}
\newcommand{\norm}[1]{\lVert#1\rVert}
\newcommand{\abs}[1]{\lvert#1\rvert} 
\newcommand{\inner}[1]{\langle#1\rangle}
\newcommand{\essinf}{\mathop{\textup{ess\,inf}}}
\newcommand{\esssup}{\mathop{\textup{ess\,sup}}}
\newcommand{\rum}[1]{\mathbb{#1}}
\newcommand{\I}{\mathrm{i}}    
\newcommand{\e}{\mathrm{e}}    
\newcommand{\di}{\mathrm{d}}   
\newcommand{\DLambda}{D\!\Lambda}
\begin{document}

	\title[Monotonicity-based reconstruction of extreme inclusions in EIT]{Monotonicity-based reconstruction of extreme inclusions in electrical impedance tomography}
	
	\author[V.~Candiani]{Valentina Candiani}
	\address[V.~Candiani]{Department of Mathematics and Systems Analysis, Aalto University, P.O. Box~11100, 00076 Helsinki, Finland.}
	\email{valentina.candiani@aalto.fi}
	
	\author[J.~Dard\'e]{J\'er\'emi Dard\'e}
	\address[J.~Dard\'e]{Institut de Math\'ematiques de Toulouse, Universit\'e Paul Sabatier, 118 route de Narbonne, GF-31062 Toulouse Cedex 9, France.}
	\email{jeremi.darde@math.univ-toulouse.fr}
	
	\author[H.~Garde]{Henrik Garde}
	\address[H.~Garde]{Department of Mathematics, Aarhus University, Ny Munkegade 118, 8000 Aarhus C, Denmark.}
	\email{garde@math.au.dk}
	
	\author[N.~Hyv\"onen]{Nuutti Hyv\"onen}
	\address[N.~Hyv\"onen]{Department of Mathematics and Systems Analysis, Aalto University, P.O. Box~11100, 00076 Helsinki, Finland.}
	\email{nuutti.hyvonen@aalto.fi}
	
	\begin{abstract}
		The monotonicity-based approach has become one of the fundamental methods for reconstructing inclusions in the inverse problem of electrical impedance tomography. Thus far the method has not been proven to be able to handle extreme inclusions that correspond to some parts of the studied domain becoming either perfectly conducting or perfectly insulating. The main obstacle has arguably been establishing suitable monotonicity principles for the corresponding Neumann-to-Dirichlet boundary maps. In this work, we tackle this shortcoming by first giving a convergence result in the operator norm for the Neumann-to-Dirichlet map when the conductivity coefficient decays to zero and/or grows to infinity in some given parts of the domain. This allows passing the necessary monotonicity principles to the limiting case. Subsequently, we show how the monotonicity method generalizes to the definite case of reconstructing either perfectly conducting or perfectly insulating inclusions, as well as to the indefinite case where the perturbed conductivity can take any values between, and including, zero and infinity. 
	\end{abstract}	
	\maketitle
	
	\KEY
	electrical impedance tomography, 
	perfectly insulating,
	perfectly conducting,
	monotonicity method.
	\EndKEY
	
	\MSC
	35R30,
	35R05,
	47H05.
	\EndMSC
	
	\section{Introduction}
	{\em Electrical impedance tomography} (EIT) is an imaging technology for retrieving information about the interior electrical conductivity of a physical object through measurements of electric current and voltage at electrodes placed on its boundary. The corresponding mathematical inverse problem is known as Calder\'on's problem or the \emph{inverse conductivity problem}. In mathematical terms, the inverse conductivity problem consists of reconstructing a coefficient inside a divergence term of a second order elliptic partial differential equation, based on information about the {\em Neumann-to-Dirichlet} (ND) map that encodes all possible Cauchy data (pairs of current densities and voltages) at the domain boundary. For more information on the inverse conductivity problem, we refer to \cite{Borcea2002a,Borcea2002,Uhlmann2009,Calderon1980} and the references therein.
	
	In this paper we consider the related, easier, problem of \emph{inclusion detection} with the goal of retrieving information on an unknown conductivity perturbation in a known background. Typically, inclusion detection algorithms aim at reconstructing the (outer shape of the) support of the perturbation, often referred to as \emph{inclusions}. We distinguish between \emph{definite} inclusions that correspond to conductivity perturbations that are either only positive or only negative and \emph{indefinite} inclusions defined by perturbations that may take both positive and negative values. 
	
	In the standard setting of EIT, where the conductivity is bounded away from zero and infinity, there are several approaches to solving the inclusion detection problem. Among some of the popular techniques are the \emph{factorization method} \cite{Bruhl2001,Bruhl2000,Kirsch2008,Hanke2015,Harrach13b,Gebauer2007,Hyvonen2004,Lechleiter2008b,Lechleiter2006} and the \emph{enclosure method} \cite{Ikehata1999a,Ikehata2000c,Brander_2015}. A more recent approach, which we consider in this article, is the \emph{monotonicity method} \cite{Tamburrino2002,Harrach10,Harrach13} that relies on the monotonicity properties of the map $\sigma\mapsto \Lambda(\sigma)$, where $\sigma$ represents a conductivity coefficient and $\Lambda(\sigma)$ the corresponding ND map. The monotonicity method is flexible in the sense that it directly handles both definite and indefinite inclusions, even when only having access to a part of the object boundary, with non-homogeneous background conductivity, with inclusions having several connected components, and without {\em a priori} information about the locations of the positive and negative inclusions. In the definite case the method gives a criterion on whether a given open set is enclosed by the (outer shape of the) inclusions or not; in the indefinite case the roles are reversed and the criterion is on whether a given set encloses the inclusions or not. By applying either of these criteria to various test sets, the outer shape of the inclusions can be reconstructed. Furthermore, due to the simplicity of the method, it adapts in a natural way to more complicated and realistic electrode models allowing a straightforward numerical implementation \cite{GardeStaboulis_2016,Garde_2019,Harrach15,Garde_2017a}. The ideas behind the monotonicity method have also been used to obtain stability estimates from finitely many measurements \cite{Harrach_2019}, and the method has very recently been applied to the reconstruction of piecewise constant conductivities with a layered structure \cite{Garde_2019b}. 
	
	For \emph{extreme} inclusions, meaning that the conductivity formally takes the value zero (perfectly insulating) or infinity (perfectly conducting) in some parts of the domain, there exist variants of both the factorization method (e.g.~\cite[Section 4.3.1]{BruhlPhD} and \cite[Section 2.2.2]{SchmittPhD}) and the enclosure method \cite{Ikehata1999a,Brander_2018}. Furthermore, there are a number of reconstruction methods designed specifically for extreme inclusions; see \cite{Friedman_1989,Munnier_2016,Kress_2005,Kress_2012,Borman_2009,Moradifam_2012} for information on some of them. A common feature of all these methods is that they have not been proven to be functional if the examined body is simultaneously contaminated by both perfectly conducting and perfectly insulating inclusions.
	
	The monotonicity method has not previously been generalized to allow extreme inclusions inside the domain of interest; fixing this flaw is the main topic of this paper. The main obstacle for such a generalization has arguably been establishing suitable monotonicity principles for the ND map in the presence of extreme inclusions. To this end, we first show that any ND map corresponding to extreme inclusions can be reached in the natural operator topology as a limit of a sequence of standard ND maps when the conductivity coefficient decays to zero and/or grows to infinity in the appropriate parts of the domain. In fact, the convergence rate equals the square root of the truncation parameter $\epsilon > 0$ that via multiplication/division controls the convergence of the conductivity to zero/infinity. Utilizing the standard monotonicity relations for the ND maps corresponding to the truncated conductivities, we show that both definite and indefinite inclusions can be reconstructed with the monotonicity method in the presence of  extreme inclusions, that is, we obtain natural generalizations of the results in \cite{Harrach13,Garde_2019}. In particular, our result on the indefinite case covers a wide class of inclusions: the perturbed conductivity can simultaneously have parts that are perfectly insulating, parts that are perfectly conducting, as well as other perturbed parts bounded away from zero and infinity.
	
	This article is organized as follows. Section~\ref{sec:continuum} gives a brief introduction to the mathematical formulation of the forward problem of EIT in the presence of extreme inclusions. Section~\ref{sec:main} lists the main results of the paper, while Sections~\ref{sec:NDconv}--\ref{sec:monomethodindefinite} are dedicated to the proofs of these main results. We also include Appendix~\ref{sec:appendixA} on some generic monotonicity estimates for ND maps with various combinations of extreme inclusions. Appendix~\ref{sec:appendixB} gives an example on different limiting behaviors of potentials in the perfectly insulating parts of the domain.
	
	To be somewhat more precise, Section~\ref{sec:NDconv} proves the convergence of the electric potential corresponding to a conductivity bounded away from zero and infinity toward its counterpart with extreme inclusions as the conductivity decays to zero and grows to infinity in the appropriate parts of the domain.  The actual result is stated in Theorem~\ref{thm:uconverge}, and it also covers the convergence of the associated ND maps.  Sections~\ref{sec:monomethoddefinite} and~\ref{sec:monomethodindefinite} prove the functionality of the monotonicity method in a framework that also allows extreme inclusions. In the definite case, the result for the linearized version of the method is given in Theorem~\ref{thm:monofast} and for the nonlinear version in Theorem~\ref{thm:monoslow}. Our main result on the characterization of indefinite inclusions, incorporating the extreme cases, is given as Theorem~\ref{thm:general}. 
	
	\subsection{Notational remarks}
	
	Let $\mathscr{L}(X,Y)$ denote the space of bounded linear operators between Banach spaces $X$ and $Y$, with $\mathscr{L}(X) := \mathscr{L}(X,X)$. For a Hilbert space $X$ and self-adjoint operators $A,B\in \mathscr{L}(X)$, we write $A\geq B$ if $A-B$ is positive semidefinite. 
	
 We denote the interior of a set $E\subseteq\rum{R}^d$ by $E^\circ$ and its characteristic function by $\chi_E$. For a real measurable essentially bounded function $\varsigma$, we write $\inf_E(\varsigma)$ and $\sup_E(\varsigma)$ in place of $\essinf(\varsigma|_E)$ and $\esssup(\varsigma|_E)$, and avoid the subscript notation if no restriction is used. 
	
	$K$ will denote a generic positive constant.
	
	\section{The continuum model}
	\label{sec:continuum}
	
	Assume $\Omega\subset \rum{R}^d$, $d\geq 2$, is a bounded Lipschitz domain with $\rum{R}^d\setminus\overline{\Omega}$ connected. Let $\Gamma\subseteq \partial\Omega$ be a relatively open nonempty boundary piece; $\inner{\cdot,\cdot}$ denotes the associated inner product on $L^2(\Gamma)$. For any measurable set $\tilde{\Omega}\subseteq\Omega$, we define
	\begin{equation*}
	L^\infty_+(\tilde{\Omega}) := \{ \varsigma\in L^\infty(\tilde{\Omega};\rum{R}) \mid \inf(\varsigma) > 0 \}.
	\end{equation*}
	If $\tilde{\Omega}$ is a Lipschitz domain with $\Gamma\subseteq \partial\tilde{\Omega}$, we define the ``$\Gamma$-mean free spaces'':
	\begin{align*}
	H^1_\diamond(\tilde{\Omega}) &:= \{ w\in H^1(\tilde{\Omega}) \mid \inner{1,w|_\Gamma} = 0\}, \\
	L^2_\diamond(\Gamma) &:= \{ g\in L^2(\Gamma) \mid \inner{1,g} = 0 \}.
	\end{align*}
	
	A set $C_0$ will represent a part of the domain that is perfectly insulating (zero conductivity) and $C_\infty$ will represent a part of the domain that is perfectly conducting (infinite conductivity). We assume these extreme inclusions satisfy the following conditions.
	\begin{assumption} \label{assump}
		Let $C = C_0\cup C_\infty$, where $C_0,C_\infty\subset \Omega$ satisfy:
		\begin{enumerate}[(i)]
			\item $C_0$ and $C_\infty$ are closures of open sets with finitely many components and Lipschitz boundaries.
			\item $C_0 \cap C_\infty = \emptyset$. 
			\item $\Omega\setminus C_0$ is connected.
		\end{enumerate}
	\end{assumption}
	\noindent Note that $C_0$ and $C_\infty$ can consist of several connected components, and the sets are also allowed to be empty.
	
	For $\varsigma \in L^\infty_+(\Omega)$, $\sigma = \sigma(\varsigma,C_0,C_\infty)$ denotes the nonnegative conductivity 
	\begin{equation*} 
	\sigma := \begin{cases}
	\varsigma & \text{in } \Omega\setminus C, \\
	0 & \text{in } C_0, \\
	\infty & \text{in } C_\infty.
	\end{cases}
	\end{equation*}
	Throughout this work we are interested in the following elliptic boundary value problem:
	\begin{align*}
	-\nabla \cdot(\sigma \nabla u) &= 0 \text{ in } \Omega \setminus C, \\[1mm]
	\nu\cdot(\sigma\nabla u) &= \begin{cases}
	f & \text{on } \Gamma, \\
	0 & \text{on } \partial (\Omega\setminus C_0)\setminus\Gamma,
	\end{cases} \\[1mm]
	\nabla u &=0 \text{ in } C_\infty^\circ, \\
	\int_{\partial C_i} \nu\cdot(\sigma\nabla u)\,\di S &= 0 \text{ for each component $C_i$ of $C_\infty$,}
	\end{align*}
	where  $\nu$ is the outer unit normal to $\Omega\setminus C$ and $f\in L_\diamond^2(\Gamma)$ represents an applied current density on $\Gamma$, while $u$ is the corresponding electric potential in $\Omega \setminus C_0$. In the last condition, the trace of $\nu\cdot(\sigma\nabla u)$ is taken from the exterior of $C_\infty$.
	
	Formally, we write the above problem as
	\begin{equation} \label{eq:continuum}
	-\nabla\cdot(\sigma\nabla u) = 0 \text{ in } \Omega, \qquad \nu\cdot(\sigma\nabla u) = \begin{cases}
	f & \text{on } \Gamma, \\
	0 & \text{on } \partial \Omega\setminus\Gamma,
	\end{cases}
	\end{equation}
        where the values $0$ and $\infty$ of $\sigma$ are to be interpreted via the appropriate boundary conditions for $C_0$ and $C_\infty$, and $u$ is only defined in $\Omega\setminus C_0$. If $C_0 = C_\infty = \emptyset$, then \eqref{eq:continuum} reduces to the standard conductivity equation for a conductivity coefficient in $L^\infty_+(\Omega)$. 
	
	Because of Assumption~\ref{assump}(iii), we may equip $H^1_\diamond(\Omega\setminus C_0)$ with either of the inner products
	\begin{align}
	\inner{v,w}_\varsigma &:= \int_{\Omega\setminus C_0} \varsigma \nabla v\cdot\nabla \overline{w}\,\di x, \label{eq:innerprodsigma} \\
	\inner{v,w}_* &:= \inner{\nabla v,\nabla w}_{L^2(\Omega\setminus C_0; \rum{C}^d)}. \notag
	\end{align}
	The corresponding norms $\norm{\cdot}_\varsigma$ and $\norm{\cdot}_*$ are equivalent to the usual $H^1$-norm on $H^1_\diamond(\Omega\setminus C_0)$, due to a Poincar\'e-type inequality related to the zero-mean condition on $\Gamma$:
	\begin{equation*}
	\frac{1}{\sup(\varsigma)^{1/2}}\norm{v}_\varsigma \leq \norm{v}_* \leq \norm{v}_{H^1(\Omega\setminus C_0)} \leq K\norm{v}_* \leq \frac{K}{\inf(\varsigma)^{1/2}}\norm{v}_\varsigma, \quad v\in H^1_\diamond(\Omega\setminus C_0). 
	\end{equation*}
	We will often make use of the above equivalences, and will do so without explicitly mentioning it.
	
	In order to rigorously define the variational problem associated to \eqref{eq:continuum}, we introduce the function space
	\begin{equation*}
		\mathscr{H}_\diamond = \mathscr{H}_\diamond(C_0,C_\infty) := \{ w \in H^1_\diamond(\Omega\setminus C_0) \mid \nabla w = 0 \text{ in } C_\infty^\circ \}.
	\end{equation*}
	It follows straightforwardly that $\mathscr{H}_\diamond$ is a closed subspace (thus itself a Hilbert space) of $H_\diamond^1(\Omega\setminus C_0)$ when equipped with either of the above inner products. In particular, we denote by $P(\varsigma,C_0,C_\infty)$ the orthogonal projection of $H^1_\diamond(\Omega\setminus C_0)$ onto $\mathscr{H}_\diamond$ in the inner product $\inner{\cdot,\cdot}_\varsigma$ (cf.~\eqref{eq:innerprodsigma}), while $P_\perp(\varsigma,C_0,C_\infty)$ is the orthogonal projection onto the orthogonal complement $\mathscr{H}_\diamond^\perp$. 

	Employing the Lax--Milgram lemma, it easily follows that \eqref{eq:continuum} has a unique weak solution $u = u_f^\sigma$ in $\mathscr{H}_\diamond$ satisfying
	\begin{equation}
	\int_{\Omega\setminus C} \sigma \nabla u \cdot \nabla \overline{v}\,\di x = \inner{f,v|_\Gamma}, \quad \forall v\in \mathscr{H}_\diamond. \label{eq:weakform}
	\end{equation} 
	This furthermore gives the estimate
	\begin{equation*}
	\norm{u}_{H^1(\Omega\setminus C_0)} \leq \frac{K}{\inf(\varsigma)}\norm{f}_{L^2(\Gamma)}. 
	\end{equation*}
	\begin{remark} \label{remark:weakform}
		To obtain \eqref{eq:weakform} from \eqref{eq:continuum}, we note that introducing a perfectly insulating inclusion to such a Neumann problem corresponds to starting out with a smaller domain $\Omega\setminus C_0$ to begin with. The requirement $\nabla u = 0$ in $C_\infty^\circ$ is naturally introduced via the choice of the variational space $\mathscr{H}_\diamond$. More precisely, deducing \eqref{eq:weakform} is a matter of integration by parts in $\Omega\setminus C$, which leaves no boundary terms related to $\partial C_0$ or $\partial C_\infty$ due to the homogeneous Neumann condition on $\partial C_0$ and because $\nu \cdot (\sigma \nabla u)$ is mean free and the test function $v \in \mathscr{H}_\diamond$ is constant on the boundary of each component in $C_\infty$. On the other hand, the existence and uniqueness of a solution for \eqref{eq:weakform} implies that the weak solution coincides with the strong solution, whenever the latter exists.

    	It is also straightforward to see that the solution to \eqref{eq:weakform} always satisfies \eqref{eq:continuum} in a suitable weak sense. Indeed, as $C_{\textup{c}}^\infty(\Omega\setminus C) \subset \mathscr{H}_\diamond$ via zero continuation to $C_\infty$, the solution of \eqref{eq:weakform} satisfies the conductivity equation in the distributional sense in $\Omega \setminus C$. Moreover, the `boundary condition' in $C_\infty$ is automatic due to the choice of the variational space $\mathscr{H}_\diamond$, and the Neumann conditions on $\partial \Omega$ and $\partial C_0$ hold in an appropriate sense of traces based on suitable Green's formulas (cf.~\cite{Lions1972,Necas2012}).
	\end{remark}
	\begin{remark}
		Throughout this article, we will use the notation $u_f^\sigma$ to indicate the solution to \eqref{eq:weakform} for Neumann condition $f$ and nonnegative conductivity $\sigma$. Similar notation will also be used for conductivity coefficients with other choices of background conductivity, perfectly conducting parts, and perfectly insulating parts. In particular, for $\varsigma\in L^\infty_+(\Omega)$ we note that $\varsigma = \sigma(\varsigma,\emptyset,\emptyset)$, and therefore $u_f^\varsigma\in H^1_\diamond(\Omega)$ satisfies
	\begin{equation*}
		\int_{\Omega} \varsigma \nabla u_f^\varsigma \cdot \nabla \overline{v}\,\di x = \inner{f,v|_\Gamma}, \quad \forall v\in H^1_\diamond(\Omega).
	\end{equation*} 
	\end{remark}
	
	Another useful characterization of $u_f^\sigma$ relates it to the inclusion-free potential $u_f^\varsigma$.
	\begin{proposition} \label{prop:altcharacterization}
		Let $C = C_0\cup C_\infty$ satisfy Assumption~\ref{assump}, $\sigma = \sigma(\varsigma,C_0,C_\infty)$, and $P = P(\varsigma,C_0,C_\infty)$. For any $f\in L^2_\diamond(\Gamma)$, $u_f^\sigma$ can be uniquely characterized as follows: 
		\begin{equation*}
			u_f^\sigma = P(u_f^\varsigma|_{\Omega\setminus C_0} - w),
		\end{equation*}
		where $w$ is the unique solution in $H^1_\diamond(\Omega\setminus C_0)$ to
		\begin{align*}
			-\nabla\cdot(\varsigma\nabla w) &= 0 \text{ in } \Omega \setminus C_0, \\
			\nu\cdot (\varsigma \nabla w) &= \nu\cdot(\varsigma \nabla u_f^\varsigma|_{\Omega\setminus C_0}) \text{ on } \partial C_0, \\
			\nu\cdot (\varsigma \nabla w) &= 0 \text{ on } \partial\Omega.
		\end{align*}
	\end{proposition}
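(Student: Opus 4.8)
The plan is to verify that the proposed expression $\tilde{u} := P(u_f^\varsigma|_{\Omega\setminus C_0} - w)$ satisfies the variational identity that characterizes $u_f^\sigma$ uniquely within $\mathscr{H}_\diamond$, and then to invoke uniqueness. First I would record that, since $\sigma = \varsigma$ on $\Omega\setminus C$ and every element of $\mathscr{H}_\diamond$ (including $u_f^\sigma$ itself) has vanishing gradient on $C_\infty^\circ$, the weak formulation \eqref{eq:weakform} may be rewritten as
\begin{equation*}
\inner{u_f^\sigma, v}_\varsigma = \inner{f, v|_\Gamma}, \qquad \forall v\in\mathscr{H}_\diamond,
\end{equation*}
so that $u_f^\sigma$ is precisely the Riesz representative in $(\mathscr{H}_\diamond, \inner{\cdot,\cdot}_\varsigma)$ of the bounded functional $v\mapsto\inner{f,v|_\Gamma}$. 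Because $\tilde{u}$ lies in $\mathscr{H}_\diamond$ by construction and $P$ is the $\inner{\cdot,\cdot}_\varsigma$-orthogonal projection onto $\mathscr{H}_\diamond$, for any $v\in\mathscr{H}_\diamond$ one has $\inner{\tilde{u}, v}_\varsigma = \inner{u_f^\varsigma|_{\Omega\setminus C_0} - w, v}_\varsigma$. Hence the entire claim reduces to establishing
\begin{equation*}
\inner{u_f^\varsigma|_{\Omega\setminus C_0} - w, v}_\varsigma = \inner{f, v|_\Gamma}, \qquad \forall v\in\mathscr{H}_\diamond.
\end{equation*}

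Before this, I would confirm that $w$ is well defined. Its weak form—find $w\in H^1_\diamond(\Omega\setminus C_0)$ with $\inner{w,v}_\varsigma = \int_{\partial C_0}\nu\cdot(\varsigma\nabla u_f^\varsigma)\,\overline{v}\,\di S$ for all $v\in H^1_\diamond(\Omega\setminus C_0)$—is solvable by Lax--Milgram: the bilinear form $\inner{\cdot,\cdot}_\varsigma$ is coercive on $H^1_\diamond(\Omega\setminus C_0)$ thanks to the Poincar\'e-type inequality valid on this space (note that $\Omega\setminus C_0$ is Lipschitz with $\Gamma\subseteq\partial(\Omega\setminus C_0)$), and the right-hand side is bounded because $\varsigma\nabla u_f^\varsigma\in H(\mathrm{div},\Omega)$ admits a normal trace in $H^{-1/2}(\partial C_0)$ to be paired against $v|_{\partial C_0}\in H^{1/2}(\partial C_0)$.

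The core of the argument is then two applications of Green's formula on $\Omega\setminus C_0$, whose boundary splits as $\partial\Omega\cup\partial C_0$ (the set $C_\infty$ lies in the interior and contributes no boundary term). Testing the inclusion-free equation $-\nabla\cdot(\varsigma\nabla u_f^\varsigma)=0$ against $v\in\mathscr{H}_\diamond\subset H^1(\Omega\setminus C_0)$ and using the Neumann conditions of $u_f^\varsigma$ on $\partial\Omega$ gives
\begin{equation*}
\inner{u_f^\varsigma|_{\Omega\setminus C_0}, v}_\varsigma = \inner{f, v|_\Gamma} + \int_{\partial C_0}\nu\cdot(\varsigma\nabla u_f^\varsigma)\,\overline{v}\,\di S,
\end{equation*}
while the defining boundary conditions of $w$ (zero flux on $\partial\Omega$, matched flux on $\partial C_0$) yield $\inner{w, v}_\varsigma = \int_{\partial C_0}\nu\cdot(\varsigma\nabla u_f^\varsigma)\,\overline{v}\,\di S$. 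Subtracting cancels the $\partial C_0$ contributions and leaves exactly $\inner{u_f^\varsigma|_{\Omega\setminus C_0}-w, v}_\varsigma = \inner{f,v|_\Gamma}$, as required; uniqueness of the Riesz representative then forces $\tilde{u} = u_f^\sigma$.

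I expect the main technical obstacle to be the rigorous handling of the normal trace $\nu\cdot(\varsigma\nabla u_f^\varsigma)$ on $\partial C_0$ and the justification of the two Green's formulas at this regularity, since $u_f^\varsigma$ is only $H^1$ and the flux lives in $H^{-1/2}(\partial C_0)$; accordingly, the boundary integrals over $\partial C_0$ must be interpreted as the $H^{-1/2}$--$H^{1/2}$ duality pairing. Everything else—the reduction to the variational identity, the projection identity, and the final cancellation—is essentially bookkeeping, the crucial structural point being simply that the two $\partial C_0$ flux terms are identical by the very construction of $w$.
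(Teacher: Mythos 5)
Your argument is correct and follows essentially the same route as the paper: the paper first observes (as ``clear from the boundary value problems'') that $u_f^\varsigma|_{\Omega\setminus C_0} - w$ is the solution $u_f^{\hat{\sigma}}$ for $\hat{\sigma} = \sigma(\varsigma,C_0,\emptyset)$ and then uses the self-adjointness of $P$ together with unique solvability of \eqref{eq:weakform}, which is exactly your Riesz-representative reduction. Your two Green's formulas on $\Omega\setminus C_0$ (with the $\partial C_0$ terms read as $H^{-1/2}$--$H^{1/2}$ pairings) merely spell out the step the paper leaves implicit, so this is a sound, slightly more detailed version of the same proof.
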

	\begin{proof}
		It is clear from the boundary value problems defining $u_f^\varsigma$ and $w$ that $u_f^{\hat{\sigma}} = u_f^\varsigma|_{\Omega\setminus C_0} - w$ where $\hat{\sigma} = \sigma(\varsigma,C_0,\emptyset)$. Recall that $P$ is an orthogonal projection and hence self-adjoint with respect to the inner product \eqref{eq:innerprodsigma}. Due to the weak formulation for $u_f^{\hat{\sigma}}$, the following thus holds for all $v = Pv\in \mathscr{H}_\diamond(C_0,C_\infty)$:
		\begin{equation*}
			\inner{f,v|_\Gamma} = \int_{\Omega\setminus C_0} \varsigma \nabla u_f^{\hat{\sigma}} \cdot \nabla \overline{Pv} \,\di x = \int_{\Omega\setminus C} \varsigma \nabla Pu_f^{\hat{\sigma}} \cdot \nabla \overline{v} \,\di x.
		\end{equation*}
		By the unique solvability of the variational problem \eqref{eq:weakform}, we conclude that $u_f^\sigma=Pu_f^{\hat{\sigma}}$.
	\end{proof}
	Finally, we define the local Neumann-to-Dirichlet (ND) map as $\Lambda(\sigma) : f\mapsto u_f^\sigma|_\Gamma$, which is a compact self-adjoint operator in $\mathscr{L}(L^2_\diamond(\Gamma))$. 
	
	\section{Main results}
	\label{sec:main}
	
	Our first result is on retrieving the electric potential, and thereby also the ND map, in the case of perfectly insulating and perfectly conducting inclusions via a limit of a sequence of truncated conductivities that belong to $L^\infty_+(\Omega)$. This theorem will be used for the proofs of all the other main results.
	
	\begin{theorem} \label{thm:uconverge}
		Let $C = C_0\cup C_\infty$ satisfy Assumption~\ref{assump}, $\sigma = \sigma(\varsigma,C_0,C_\infty)$, and define the $\epsilon$-truncated version of $\sigma$, with $\epsilon>0$, by
		\begin{equation*} 
		\sigma_\epsilon := \begin{cases}
		\varsigma & \textup{in } \Omega\setminus C, \\
		\epsilon\varsigma & \textup{in } C_0, \\
		\epsilon^{-1}\varsigma & \textup{in } C_\infty.
		\end{cases}
		\end{equation*}
		Then the following estimate holds
		\begin{equation}
		\norm{u_f^{\sigma_\epsilon} - u_f^\sigma}_{H^1(\Omega\setminus C_0)} \leq K \epsilon^{1/2} \norm{f}_{L^2(\Gamma)}, \label{eq:convrate}
		\end{equation}
        with $K>0$ independent of $f$ and $\epsilon$.
		As a direct consequence
		\begin{equation*}
		\norm{\Lambda(\sigma_\epsilon) - \Lambda(\sigma)}_{\mathscr{L}(L^2_\diamond(\Gamma))} \leq K\epsilon^{1/2}.
		\end{equation*}
	\end{theorem}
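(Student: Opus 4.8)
\emph{Setup and energy estimate.} Throughout write $u_\epsilon := u_f^{\sigma_\epsilon}$, $u := u_f^\sigma$, and $\tilde u_\epsilon := u_\epsilon|_{\Omega\setminus C_0}\in H^1_\diamond(\Omega\setminus C_0)$, and assume $0<\epsilon\le 1$. The plan is to compare $\tilde u_\epsilon$ with $u$ using the orthogonal decomposition induced by $P = P(\varsigma,C_0,C_\infty)$. First I would test the full-domain weak form \eqref{eq:weakform} for $\sigma_\epsilon$ against $u_\epsilon$, which gives
\[
\int_{\Omega\setminus C}\varsigma\abs{\nabla u_\epsilon}^2\,\di x + \epsilon\int_{C_0}\varsigma\abs{\nabla u_\epsilon}^2\,\di x + \epsilon^{-1}\int_{C_\infty}\varsigma\abs{\nabla u_\epsilon}^2\,\di x = \inner{f,u_\epsilon|_\Gamma}.
\]
Bounding the right-hand side by $\norm{f}_{L^2(\Gamma)}\norm{u_\epsilon|_\Gamma}_{L^2(\Gamma)}$, controlling the trace by $\norm{\nabla u_\epsilon}_{L^2(\Omega\setminus C_0)}$ via the trace theorem and the Poincaré-type inequality on $\Omega\setminus C_0$, and noting that the $C_\infty$-contribution to $\norm{\nabla u_\epsilon}_{L^2(\Omega\setminus C_0)}^2$ is at most $\epsilon$ times the energy, one absorbs and concludes that the whole left-hand side is $\le K\norm{f}_{L^2(\Gamma)}^2$. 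The two consequences I need are
\[
\norm{\nabla u_\epsilon}_{L^2(C_0)} \le K\epsilon^{-1/2}\norm{f}_{L^2(\Gamma)}, \qquad \norm{\nabla u_\epsilon}_{L^2(C_\infty)} \le K\epsilon^{1/2}\norm{f}_{L^2(\Gamma)}.
\]

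\emph{Decomposition and the $\mathscr{H}_\diamond$-part.} Since $u\in\mathscr{H}_\diamond$, I split $\tilde u_\epsilon - u = (P\tilde u_\epsilon - u) + P_\perp\tilde u_\epsilon$ with the first summand in $\mathscr{H}_\diamond$ and the second in $\mathscr{H}_\diamond^\perp$, so that $\norm{\tilde u_\epsilon - u}_\varsigma^2 = \norm{P\tilde u_\epsilon - u}_\varsigma^2 + \norm{P_\perp\tilde u_\epsilon}_\varsigma^2$. For the first summand, fix a bounded extension operator $E\colon H^1(\Omega\setminus C_0)\to H^1(\Omega)$ leaving boundary values on $\Gamma$ unchanged. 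For $g\in\mathscr{H}_\diamond$, self-adjointness of $P$ and $\nabla g = 0$ in $C_\infty^\circ$ give $\inner{P\tilde u_\epsilon,g}_\varsigma = \int_{\Omega\setminus C}\varsigma\nabla u_\epsilon\cdot\nabla\overline{g}\,\di x$, while testing \eqref{eq:weakform} for $u_\epsilon$ against $Eg$ and again using $\nabla g=0$ in $C_\infty^\circ$ yields $\int_{\Omega\setminus C}\varsigma\nabla u_\epsilon\cdot\nabla\overline{g}\,\di x = \inner{f,g|_\Gamma} - \epsilon\int_{C_0}\varsigma\nabla u_\epsilon\cdot\nabla\overline{Eg}\,\di x$. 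Since $\inner{u,g}_\varsigma = \inner{f,g|_\Gamma}$ by the weak form for $u$, subtraction gives $\inner{P\tilde u_\epsilon - u,g}_\varsigma = -\epsilon\int_{C_0}\varsigma\nabla u_\epsilon\cdot\nabla\overline{Eg}\,\di x$, and hence, by Cauchy--Schwarz, the $C_0$-bound of Step~1, and boundedness of $E$,
\[
\abs{\inner{P\tilde u_\epsilon - u,g}_\varsigma} \le \epsilon\sup(\varsigma)\norm{\nabla u_\epsilon}_{L^2(C_0)}\norm{\nabla Eg}_{L^2(C_0)} \le K\epsilon^{1/2}\norm{f}_{L^2(\Gamma)}\norm{g}_\varsigma.
\]
Choosing $g = P\tilde u_\epsilon - u$ gives $\norm{P\tilde u_\epsilon - u}_\varsigma \le K\epsilon^{1/2}\norm{f}_{L^2(\Gamma)}$.

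\emph{The $\mathscr{H}_\diamond^\perp$-part and conclusion.} Here $\norm{P_\perp\tilde u_\epsilon}_\varsigma = \dist_\varsigma(\tilde u_\epsilon,\mathscr{H}_\diamond)$, so it suffices to produce one competitor $w\in\mathscr{H}_\diamond$ with $\norm{\tilde u_\epsilon - w}_\varsigma$ small. On each component $C_i$ of $C_\infty$ let $c_i$ be the mean of $u_\epsilon$ over $C_i$, and set $w := \tilde u_\epsilon - \theta$, where $\theta = u_\epsilon - c_i$ on $C_i$ (so $\nabla w = 0$ there) and $\theta$ is extended into pairwise disjoint neighborhoods of the $C_i$ within $\Omega\setminus C_0$ avoiding $\Gamma$ by a bounded right inverse of the trace, and by zero elsewhere. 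Then $w\in\mathscr{H}_\diamond$, and the trace estimate together with the Poincaré inequality $\norm{u_\epsilon - c_i}_{H^1(C_i)}\le K\norm{\nabla u_\epsilon}_{L^2(C_i)}$ gives
\[
\norm{P_\perp\tilde u_\epsilon}_\varsigma \le \norm{\theta}_\varsigma \le K\norm{\nabla u_\epsilon}_{L^2(C_\infty)} \le K\epsilon^{1/2}\norm{f}_{L^2(\Gamma)}.
\]
Combining the two summands through the Pythagorean identity and the equivalence of $\norm{\cdot}_\varsigma$ with the $H^1$-norm yields \eqref{eq:convrate}; the operator-norm bound then follows since $(\Lambda(\sigma_\epsilon)-\Lambda(\sigma))f = (\tilde u_\epsilon - u)|_\Gamma$ and the trace map $H^1(\Omega\setminus C_0)\to L^2(\Gamma)$ is bounded.

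\emph{Main obstacle.} I expect the delicate step to be the $\mathscr{H}_\diamond^\perp$-part: whereas the perfectly insulating inclusion $C_0$ is handled transparently by the extension argument above, the perfectly conducting part forces me to replace $u_\epsilon$ by a genuinely admissible element of $\mathscr{H}_\diamond$ — one with vanishing gradient throughout $C_\infty^\circ$ — without destroying $H^1$-conformity across $\partial C_\infty$. Arranging the flattening correction $\theta$ to be simultaneously supported away from $C_0$ and $\Gamma$, continuous across $\partial C_\infty$, and bounded in energy by $\norm{\nabla u_\epsilon}_{L^2(C_\infty)}$ is the crux, and it is precisely here that Assumption~\ref{assump} (Lipschitz boundaries, finitely many components, and $C_0\cap C_\infty=\emptyset$, guaranteeing positive mutual distances) is used.
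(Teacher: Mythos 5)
Your proof is correct and shares the paper's overall skeleton --- the $\epsilon$-weighted energy estimates obtained by testing against $u_\epsilon$, followed by the orthogonal decomposition with respect to $\mathscr{H}_\diamond$ in the $\inner{\cdot,\cdot}_\varsigma$ inner product, with the $C_0$-error carried by the $\mathscr{H}_\diamond$-component and the $C_\infty$-error by the $\mathscr{H}_\diamond^\perp$-component --- but both key estimates are implemented differently. For the $\mathscr{H}_\diamond$-part, the paper reduces $\inner{u-u_\epsilon,P(u-u_\epsilon)}_\varsigma$ to a duality pairing on $\partial C_0$ by integrating by parts in $C_0^\circ$ (using that $u_\epsilon$ solves the conductivity equation there) and then invokes the boundedness of the normal trace $H_{\textup{div}}(C_0^\circ)\to H^{-1/2}(\partial C_0)$ together with \eqref{eq:ueps_0}; your extension-operator argument reaches the same $\epsilon\cdot\epsilon^{-1/2}=\epsilon^{1/2}$ rate by plain Cauchy--Schwarz on the volume integral over $C_0$, avoiding the $H_{\textup{div}}$ trace machinery entirely (at the mild cost of fixing a bounded extension $H^1(\Omega\setminus C_0)\to H^1(\Omega)$, which exists since $\Omega\setminus C_0$ is Lipschitz, and checking $Eg\in H^1_\diamond(\Omega)$, which is automatic as $E$ only modifies $g$ inside $C_0$). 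For the $\mathscr{H}_\diamond^\perp$-part, the paper's Lemma~\ref{lemma:perp} produces the competitor by solving an auxiliary Dirichlet problem in $\Omega\setminus C$ with the componentwise mean $v^{C_\infty}$ as data on $\partial C_\infty$, whereas you build it by hand with a trace lifting supported in disjoint annular neighborhoods of the components of $C_\infty$; both hinge on the same trace-plus-Poincar\'e estimate $\norm{u_\epsilon-c_i}_{H^1(C_i)}\le K\norm{\nabla u_\epsilon}_{L^2(C_i)}$ and on Assumption~\ref{assump} supplying positive mutual distances, but the paper's formulation is packaged as a reusable lemma that is needed again in Lemma~\ref{lemma:locpot2} and Appendix~\ref{sec:appendixA}, while yours is local to this theorem. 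A final cosmetic difference: your clean Pythagorean splitting $\norm{\tilde u_\epsilon-u}_\varsigma^2=\norm{P\tilde u_\epsilon-u}_\varsigma^2+\norm{P_\perp\tilde u_\epsilon}_\varsigma^2$ avoids the paper's step of dividing through by $\norm{u-u_\epsilon}_\varsigma$ in \eqref{eq:estu}. All steps check out, including the correct identification of the crux in your closing remark.
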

	As a side note for Theorem~\ref{thm:uconverge}, while there exist several $H^1$-extensions of $u_f^\sigma$ into $C_0$, there is a natural extension that coincides with the limiting behavior of $u_f^{\sigma_\epsilon}|_{C_0}$ for $\epsilon\to 0$.
	\begin{corollary} \label{coro:extended}
		Let $C$, $\sigma$, and $\sigma_\epsilon$ be as in Theorem~\ref{thm:uconverge}. Consider the $H^1$-extension operator $E = E(\varsigma,C_0) : H^1(\Omega\setminus C_0) \to H^1(\Omega)$, defined for $w\in H^1(\Omega\setminus C_0)$ as
		\begin{equation*}
		E w := \begin{cases}
		w & \textup{in } \Omega\setminus C_0, \\
		\tilde{w} & \textup{in } C_0,
		\end{cases}
		\end{equation*}
		where $\tilde{w}$ is the unique solution in $H^1(C_0^\circ)$ to the Dirichlet problem
		\begin{equation*} 
		-\nabla\cdot(\varsigma\nabla \tilde{w}) = 0 \text{ in } C_0^\circ, \qquad \tilde{w} = w \text{ on } \partial C_0.
		\end{equation*}
		Then it holds
		\begin{equation*}
		\norm{u_f^{\sigma_{\epsilon}} - Eu_f^\sigma}_{H^1(\Omega)} \leq K\epsilon^{1/2}\norm{f}_{L^2(\Gamma)}.
		\end{equation*}
	\end{corollary}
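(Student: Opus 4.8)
The plan is to exploit that, by construction, $Eu_f^\sigma$ agrees with $u_f^\sigma$ on $\Omega\setminus C_0$, so Theorem~\ref{thm:uconverge} already controls the difference there and the only genuinely new work is to estimate $u_f^{\sigma_\epsilon}-Eu_f^\sigma$ on the insulating part $C_0^\circ$. Since $Eu_f^\sigma\in H^1(\Omega)$ precisely because its traces onto $\partial C_0$ match from both sides, and since $\partial C_0$ has Lebesgue measure zero, the squared $H^1(\Omega)$-norm splits additively as
\[
\norm{u_f^{\sigma_\epsilon}-Eu_f^\sigma}_{H^1(\Omega)}^2 = \norm{u_f^{\sigma_\epsilon}-u_f^\sigma}_{H^1(\Omega\setminus C_0)}^2 + \norm{u_f^{\sigma_\epsilon}-Eu_f^\sigma}_{H^1(C_0^\circ)}^2 .
\]
The first summand is at most $K\epsilon\norm{f}_{L^2(\Gamma)}^2$ by \eqref{eq:convrate}, so it remains to bound the second by a quantity of the same order.

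The key observation is that $u_f^{\sigma_\epsilon}|_{C_0^\circ}$ solves the same equation as $\tilde{w}=Eu_f^\sigma|_{C_0^\circ}$. Indeed, $\sigma_\epsilon=\epsilon\varsigma\in L^\infty_+(\Omega)$ involves no extreme inclusions, so $u_f^{\sigma_\epsilon}$ is the standard weak solution of the conductivity equation throughout $\Omega$; in particular $-\nabla\cdot(\epsilon\varsigma\nabla u_f^{\sigma_\epsilon})=0$ in $C_0^\circ$, and dividing by $\epsilon$ gives $-\nabla\cdot(\varsigma\nabla u_f^{\sigma_\epsilon})=0$ in $C_0^\circ$, which is exactly the equation defining $\tilde{w}$. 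Hence the difference $d:=u_f^{\sigma_\epsilon}-\tilde{w}$ solves the homogeneous Dirichlet problem
\[
-\nabla\cdot(\varsigma\nabla d)=0 \ \textup{ in } C_0^\circ, \qquad d|_{\partial C_0}=(u_f^{\sigma_\epsilon}-u_f^\sigma)|_{\partial C_0},
\]
where the boundary value arises because $\tilde{w}|_{\partial C_0}=u_f^\sigma|_{\partial C_0}$ by the definition of $E$.

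To close the argument I would invoke the well-posedness of this weighted Dirichlet problem on the Lipschitz domain $C_0^\circ$: subtracting a bounded $H^1$-lift of the boundary data and applying the Lax--Milgram lemma shows that the solution operator $H^{1/2}(\partial C_0)\to H^1(C_0^\circ)$ is bounded, whence $\norm{d}_{H^1(C_0^\circ)}\leq K\norm{(u_f^{\sigma_\epsilon}-u_f^\sigma)|_{\partial C_0}}_{H^{1/2}(\partial C_0)}$. Because $C_0\subset\Omega$ is compact, $\partial C_0$ is an interior Lipschitz surface forming part of $\partial(\Omega\setminus C_0)$, so the exterior trace operator $H^1(\Omega\setminus C_0)\to H^{1/2}(\partial C_0)$ is likewise bounded and
\[
\norm{(u_f^{\sigma_\epsilon}-u_f^\sigma)|_{\partial C_0}}_{H^{1/2}(\partial C_0)}\leq K\norm{u_f^{\sigma_\epsilon}-u_f^\sigma}_{H^1(\Omega\setminus C_0)}\leq K\epsilon^{1/2}\norm{f}_{L^2(\Gamma)},
\]
the last step being \eqref{eq:convrate}. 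Feeding this into the additive splitting yields the assertion. The only delicate point is the transfer of the convergence rate from the exterior region into the inclusion $C_0$; this hinges entirely on recognizing the shared interior PDE and on chaining the two bounded maps (the trace out of $\Omega\setminus C_0$ and the Dirichlet solution operator into $C_0^\circ$), after which the square-root rate is inherited verbatim with no further estimation of $f$ required.
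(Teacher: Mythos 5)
Your proposal is correct and follows essentially the same route as the paper's own proof: both observe that $u_f^{\sigma_\epsilon}$ and $\tilde{w}$ satisfy the same conductivity equation in $C_0^\circ$, so their difference solves a Dirichlet problem whose boundary data is controlled via the trace theorem in $\Omega\setminus C_0$ and the rate from Theorem~\ref{thm:uconverge}. The only cosmetic issue is the phrase ``$\sigma_\epsilon=\epsilon\varsigma\in L^\infty_+(\Omega)$'', which should read that $\sigma_\epsilon=\epsilon\varsigma$ on $C_0$ while $\sigma_\epsilon\in L^\infty_+(\Omega)$ globally; this does not affect the argument.
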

	\begin{proof}
		Since $\nabla\cdot(\varsigma \nabla u_f^{\sigma_{\epsilon}}) = \nabla\cdot(\sigma_\epsilon \nabla u_f^{\sigma_{\epsilon}}) = 0$ in $C_0^\circ$, the restriction $v := (u_f^{\sigma_{\epsilon}} - E u_f^\sigma)|_{C_0}$ satisfies the Dirichlet problem
		\begin{equation*} 
		-\nabla\cdot(\varsigma\nabla v) = 0 \text{ in } C_0^\circ, \qquad v = u_f^{\sigma_{\epsilon}} - u_f^\sigma \text{ on } \partial C_0.
		\end{equation*}
		Hence, by the continuous dependence on the Dirichlet boundary value, the trace theorem in $\Omega\setminus C_0$, and the result of Theorem~\ref{thm:uconverge}, we have
		\begin{align*}
		\norm{v}_{H^1(C_0^\circ)} \leq K\norm{(u_f^{\sigma_{\epsilon}} - u_f^\sigma)|_{\partial C_0}}_{H^{1/2}(\partial C_0)} \leq K\norm{u_f^{\sigma_{\epsilon}} - u_f^\sigma}_{H^1(\Omega\setminus C_0)} \leq K\epsilon^{1/2}\norm{f}_{L^2(\Gamma)}.
		\end{align*}
		This estimate, combined with the result of Theorem~\ref{thm:uconverge}, concludes the proof.
	\end{proof}
	
	Next we will consider the reconstruction of inclusions via the monotonicity method. For this purpose we introduce a family of admissible test inclusions as 
	\begin{align*}
		\mathcal{A} &:= \{ C \subset \Omega \mid C \text{ is the closure of an open set,}  \\
		&\hphantom{{}:= \{C \subset \Omega \mid{}}\text{has connected complement,} \\
		&\hphantom{{}:= \{C \subset \Omega \mid{}}\text{and has Lipschitz boundary } \partial C \}.
	\end{align*}
	Before discussing the more general indefinite case, we first assume $D\in \mathcal{A}$ represents either perfectly insulating or perfectly conducting inclusions that we seek to recover. These partial results are stated as separate theorems because they allow to recover $D$ based on positive semidefiniteness tests in comparison to conductivities that belong to $L^\infty_+(\Omega)$ (see~Theorem~\ref{thm:monoslow}), which is straightforward from the standpoint of a numerical implementation. We also introduce a linearized version of the reconstruction method that only relies on the ND map and its Fr\'echet derivative at the background conductivity, hence leading to a very fast algorithm.
	
	Let $\gamma_0\in L^\infty_+(\Omega)$ denote the so-called background conductivity and let $C\in \mathcal{A}$. We use the following notation:
	\begin{enumerate}[(i)]
		\item $\DLambda(\gamma_0;\,\cdot\,) \in \mathscr{L}(L^\infty(\Omega;\rum{R}),\mathscr{L}(L^2_\diamond(\Gamma)))$ denotes the Fr\'echet derivative of $\Lambda|_{L^\infty_+(\Omega)}$ evaluated at $\gamma_0$, characterized by the well-known quadratic formula
		\begin{equation} \label{eq:DNfrechet}
		\inner{\DLambda(\gamma_0;\eta)f,f} = -\int_{\Omega} \eta \abs{\nabla u_f^{\gamma_0}}^2\,\di x, \quad \eta\in L^\infty(\Omega;\rum{R}), \; f\in L^2_\diamond(\Gamma).
		\end{equation}
		\item $\Lambda_0(C) := \Lambda(\sigma(\gamma_0,C,\emptyset))$ denotes the ND map for the conductivity characterized by perfectly insulating inclusions in $C$ and $\gamma_0$ in $\Omega\setminus C$.
		\item $\Lambda_\infty(C) := \Lambda(\sigma(\gamma_0,\emptyset,C))$ denotes the ND map for the conductivity characterized by perfectly conducting inclusions in $C$ and $\gamma_0$ in $\Omega\setminus C$.
	\end{enumerate}
	We will furthermore require that $\gamma_0$ satisfies a unique continuation principle.
	\begin{definition} \label{defi:ucp}
		Suppose $U\subseteq \overline{\Omega}$ is a relatively open and connected set that intersects $\Gamma$. We say that $\varsigma\in L^\infty_+(\Omega)$ satisfies the weak \emph{unique continuation principle} (UCP) in $U$ for the conductivity equation, provided that only the trivial solution of 
		\begin{equation*}
			-\nabla \cdot(\varsigma\nabla v) = 0 \text{ in } U^\circ
		\end{equation*}
		can be identically zero in a nonempty open subset of $U$, and likewise only the trivial solution has vanishing Cauchy data on $\partial U\cap \Gamma$. If $U = \overline{\Omega}$ we simply say that $\varsigma$ satisfies the UCP.
	\end{definition}
	We are now ready to introduce the linearized and nonlinear reconstruction methods for recovering $D$ based on boundary data given either as $\Lambda_0(D)$ or $\Lambda_\infty(D)$.
	\begin{theorem} \label{thm:monofast}
		Let $\gamma_0\in L^\infty_+(\Omega)$ satisfy the UCP, let $0 < \beta \leq \inf(\gamma_0)$, and let $B\subseteq\Omega$ be a nonempty open set. For $D\in\mathcal{A}$, it holds
		\begin{align*}
		B\subset D &\qquad \text{if and only if} \qquad \Lambda_0(D) \geq \Lambda(\gamma_0)-\beta \DLambda(\gamma_0;\chi_B) \\
		&\qquad \text{if and only if} \qquad \Lambda(\gamma_0) + \beta \DLambda(\gamma_0;\chi_B) \geq \Lambda_\infty(D).
		\end{align*}
	\end{theorem}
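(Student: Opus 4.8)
The plan is to test both operator inequalities against arbitrary $f\in L^2_\diamond(\Gamma)$, turning them into quadratic-form inequalities, and then to reduce each of the two claimed equivalences to a pair of monotonicity estimates for the extreme ND maps taken from Appendix~\ref{sec:appendixA} (these being exactly the estimates produced by passing the standard non-extreme monotonicity relations for the truncated conductivities $\sigma_\epsilon$ to the limit $\epsilon\to 0$ via Theorem~\ref{thm:uconverge}). Using \eqref{eq:DNfrechet}, the insulating inequality $\Lambda_0(D)\geq\Lambda(\gamma_0)-\beta\DLambda(\gamma_0;\chi_B)$ is equivalent to
\begin{equation*}
\inner{(\Lambda_0(D)-\Lambda(\gamma_0))f,f}\geq\beta\int_B\abs{\nabla u_f^{\gamma_0}}^2\,\di x\quad\text{for all }f,
\end{equation*}
and the conducting inequality $\Lambda(\gamma_0)+\beta\DLambda(\gamma_0;\chi_B)\geq\Lambda_\infty(D)$ is equivalent to
\begin{equation*}
\inner{(\Lambda(\gamma_0)-\Lambda_\infty(D))f,f}\geq\beta\int_B\abs{\nabla u_f^{\gamma_0}}^2\,\di x\quad\text{for all }f.
\end{equation*}

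For the implications ``$B\subset D\Rightarrow$ both inequalities'' I would invoke the two lower monotonicity bounds
\begin{align*}
\inner{(\Lambda_0(D)-\Lambda(\gamma_0))f,f}&\geq\int_D\gamma_0\abs{\nabla u_f^{\gamma_0}}^2\,\di x, \\
\inner{(\Lambda(\gamma_0)-\Lambda_\infty(D))f,f}&\geq\int_D\gamma_0\abs{\nabla u_f^{\gamma_0}}^2\,\di x.
\end{align*}
Each follows by testing the relevant variational problem with the background potential: in the insulating case one uses $u_f^{\gamma_0}|_{\Omega\setminus D}$ as a competitor in the maximization characterizing $\inner{\Lambda_0(D)f,f}$, and in the conducting case one uses that $u_f^{\sigma(\gamma_0,\emptyset,D)}=P\,u_f^{\gamma_0}$ (Proposition~\ref{prop:altcharacterization} with $C_0=\emptyset$) together with the Pythagorean identity for the orthogonal projection $P$ onto $\mathscr{H}_\diamond$ in $\inner{\cdot,\cdot}_{\gamma_0}$. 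Since $0<\beta\leq\inf(\gamma_0)\leq\gamma_0$ a.e.\ and $B\subset D$, the right-hand side dominates $\beta\int_B\abs{\nabla u_f^{\gamma_0}}^2\,\di x$, giving both inequalities at once.

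For the converses I argue by contraposition, assuming $B\not\subset D$. As $D$ is closed, some point of $B$ lies outside $D$, and since $B$ is open I can fix a ball $\mathcal{O}$ with $\overline{\mathcal{O}}\subset B$ and $\overline{\mathcal{O}}\cap D=\emptyset$, together with an open neighborhood $D'$ of $D$ satisfying $\overline{D'}\cap\overline{\mathcal{O}}=\emptyset$. The second ingredient from Appendix~\ref{sec:appendixA} are the matching upper bounds
\begin{align*}
\inner{(\Lambda_0(D)-\Lambda(\gamma_0))f,f}&\leq K\norm{u_f^{\gamma_0}}_{H^1(D')}^2, \\
\inner{(\Lambda(\gamma_0)-\Lambda_\infty(D))f,f}&\leq K\norm{u_f^{\gamma_0}}_{H^1(D')}^2,
\end{align*}
which express that the ND-map perturbation produced by an extreme inclusion in $D$ is controlled by the energy of the background potential in a neighborhood of $D$. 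I would then apply the localized potentials technique to the background conductivity $\gamma_0$: exploiting the UCP for $\gamma_0$ (Definition~\ref{defi:ucp}), the admissibility $D\in\mathcal{A}$, and the separation of $\overline{\mathcal{O}}$ and $\overline{D'}$, one constructs a sequence $f_k\in L^2_\diamond(\Gamma)$ with
\begin{equation*}
\int_{\mathcal{O}}\abs{\nabla u_{f_k}^{\gamma_0}}^2\,\di x\to\infty\qquad\text{and}\qquad\norm{u_{f_k}^{\gamma_0}}_{H^1(D')}^2\to 0.
\end{equation*}
Inserting this into the upper bounds gives $\inner{(\Lambda_0(D)-\Lambda(\gamma_0))f_k,f_k}\to 0$ and $\inner{(\Lambda(\gamma_0)-\Lambda_\infty(D))f_k,f_k}\to 0$, while $\beta\int_B\abs{\nabla u_{f_k}^{\gamma_0}}^2\,\di x\geq\beta\int_{\mathcal{O}}\abs{\nabla u_{f_k}^{\gamma_0}}^2\,\di x\to\infty$. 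Hence both reduced quadratic-form inequalities fail for large $k$, so neither operator inequality can hold, which completes both equivalences.

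The main obstacle lies in the converse direction, in making the two limiting monotonicity estimates and the localized potentials cooperate. The delicate point is that the ND-map perturbation cannot be bounded by the energy of $u_f^{\gamma_0}$ in $D$ alone but only in a neighborhood $D'$ of $D$ (the insulating correction is driven by the normal flux, and the conducting correction by the boundary trace, of $u_f^{\gamma_0}$ on $\partial D$); accordingly the localized potentials must be forced to vanish on a full neighborhood of $D$ while concentrating on $\mathcal{O}$, which is possible precisely because $\overline{\mathcal{O}}$ is separated from $D$ and $\gamma_0$ satisfies the UCP. Verifying that the required extreme monotonicity estimates indeed survive the passage to the limit through Theorem~\ref{thm:uconverge}, with constants independent of $f$, is the technical heart of the argument and is exactly what Appendix~\ref{sec:appendixA} supplies.
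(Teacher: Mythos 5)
Your proposal is correct in substance and follows the same overall strategy as the paper: lower monotonicity bounds for the extreme ND maps (obtained by passing the truncated conductivities to the limit via Theorem~\ref{thm:uconverge}, i.e.\ the left-hand inequalities of Lemma~\ref{lemma:monoappendix}(ii)--(iii)) give ``$\Rightarrow$'' once $\beta\le\inf(\gamma_0)$ and $B\subset D$ are used, and localized potentials combined with matching upper bounds give ``$\Leftarrow$''. The genuine difference is in the converse. For the conducting case the paper already bounds $\inner{(\Lambda(\gamma_0)-\Lambda_\infty(D))f,f}$ by $K\int_D\abs{\nabla u_f^{\gamma_0}}^2$ (Lemma~\ref{lemma:monoappendix}(ii)), which is sharper than your neighborhood bound; but for the insulating case the paper's estimate (Corollary~\ref{coro:mononew}) is in terms of the \emph{perturbed} extended potential $\hat u$ on $D$, which is why it needs Lemma~\ref{lemma:locpot2} to localize the perturbed potentials with the same currents. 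You instead bound the insulating defect by the \emph{background} potential on a neighborhood $D'$ of $D$, so that only Lemma~\ref{lemma:locpot} is required --- a legitimate and arguably more economical route. Two imprecisions should be repaired, though. First, your insulating upper bound is not literally one of the Appendix~\ref{sec:appendixA} estimates: converting $\int_D\gamma_0\abs{\nabla\hat u}^2$ into $K\int_{D'}\abs{\nabla u_f^{\gamma_0}}^2$ requires the $H_{\textup{div}}$-trace estimate for $\nu\cdot(\gamma_0\nabla u_f^{\gamma_0})$ on $\partial D$ taken in the annulus $D'\setminus D$, together with continuous dependence of the correction $w$ of Proposition~\ref{prop:altcharacterization} on that Neumann datum --- exactly the content of \eqref{eq:wiest} in the proof of Lemma~\ref{lemma:locpot2}; you gesture at this mechanism but it is the one nontrivial step of your converse and must be written out. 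Second, the quantity that vanishes along the localized sequence is $\int_{D'}\abs{\nabla u_{f_k}^{\gamma_0}}^2$, not $\norm{u_{f_k}^{\gamma_0}}^2_{H^1(D')}$: the potentials only tend to zero modulo additive constants, so the full $H^1(D')$-norm in your upper bounds must be replaced by the gradient seminorm (which is all the flux and trace estimates actually need). With these repairs the argument closes and reproduces the theorem without the paper's $\alpha_0$-bookkeeping in the forward direction.
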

	\begin{theorem} \label{thm:monoslow}
		Let $\gamma_0\in L^\infty_+(\Omega)$ satisfy the UCP and let $B\subseteq\Omega$ be a nonempty open set. For $D\in\mathcal{A}$, it holds:
		\begin{alignat*}{2}
		\text{If } 0 < \beta < \inf(\gamma_0), \text{ then} &\qquad B\subset D& \qquad &\text{if and only if} \qquad \Lambda_0(D) \geq \Lambda(\gamma_0-\beta\chi_B). \\
		\text{If } \beta >0, \text{ then} &\qquad B\subset D&  &\text{if and only if} \qquad \Lambda(\gamma_0 + \beta\chi_B) \geq \Lambda_\infty(D).
		\end{alignat*}
	\end{theorem}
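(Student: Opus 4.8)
The plan is to prove the two equivalences separately, exploiting that they are structurally dual: I describe the perfectly insulating case and indicate the changes for the perfectly conducting one. Throughout, write $\sigma_0 = \sigma(\gamma_0,D,\emptyset)$ so that $\Lambda_0(D)=\Lambda(\sigma_0)$, and $\sigma_\infty = \sigma(\gamma_0,\emptyset,D)$ so that $\Lambda_\infty(D)=\Lambda(\sigma_\infty)$, and set $\kappa = \gamma_0-\beta\chi_B\in L^\infty_+(\Omega)$ (using $\beta<\inf(\gamma_0)$) and $\tau = \gamma_0+\beta\chi_B\in L^\infty_+(\Omega)$.

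For the forward implications I would argue by truncation and Theorem~\ref{thm:uconverge}. Suppose $B\subset D$ and let $\sigma_\epsilon$ be the $\epsilon$-truncation of $\sigma_0$. Comparing $\sigma_\epsilon$ and $\kappa$ on the three regions $\Omega\setminus D$, $D\setminus B$, and $B$ (where $\beta<\inf(\gamma_0)$ forces $\epsilon\gamma_0\leq\gamma_0-\beta$ once $\epsilon$ is small) shows $\sigma_\epsilon\leq\kappa$ a.e.\ for small $\epsilon$. The elementary monotonicity of $\Lambda$ on $L^\infty_+(\Omega)$ then gives $\Lambda(\sigma_\epsilon)\geq\Lambda(\kappa)$, and since $\Lambda(\sigma_\epsilon)\to\Lambda_0(D)$ in operator norm by Theorem~\ref{thm:uconverge} and the cone of positive semidefinite operators is closed, $\Lambda_0(D)\geq\Lambda(\kappa)$. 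The conducting forward direction is identical, now with the $\epsilon$-truncation of $\sigma_\infty$ and the reversed ordering $\tau\leq\sigma_\epsilon$.

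For the reverse implications I would argue by contraposition. Assume $B\not\subset D$; since $D$ is closed and $B$ open, there is a ball $W'$ with $\overline{W'}\subset B\setminus D$, so $\overline{W'}$ is separated from $D$. The tool is a monotonicity estimate for the extreme map, obtained by passing the standard quadratic monotonicity inequalities to the limit through Theorem~\ref{thm:uconverge} (the generic estimates of Appendix~\ref{sec:appendixA}). In the insulating case this yields a \emph{lower} bound $\inner{(\Lambda(\kappa)-\Lambda_0(D))f,f}\geq \beta\int_{W'}|\nabla u_f^{\sigma_0}|^2\,\di x-\int_D\gamma_0|\nabla Eu_f^{\sigma_0}|^2\,\di x$, with $E$ the extension operator of Corollary~\ref{coro:extended} and the last term finite; in the conducting case, where $\nabla u_f^{\sigma_\infty}=0$ on $D$, one obtains an \emph{upper} bound for $\inner{(\Lambda(\tau)-\Lambda_\infty(D))f,f}$ whose leading term is $-\beta\int_{W'}|\nabla u_f^{\sigma_\infty}|^2\,\di x$. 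I would then invoke localized potentials, whose existence is guaranteed by the assumed UCP for $\gamma_0$ together with the separation of $W'$ from $D$: there is a sequence $(f_k)$ concentrating $\int_{W'}|\nabla u_{f_k}^{\bullet}|^2$ while keeping the remainder of lower order. Substituting, the two bounds become, respectively, positive and negative for large $k$, so the claimed operator inequality fails.

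As a shortcut worth recording, the insulating reverse implication can instead be deduced from Theorem~\ref{thm:monofast}: the quadratic monotonicity inequality gives the Loewner sandwich $\Lambda(\gamma_0-\beta\chi_B)\geq\Lambda(\gamma_0)-\beta\DLambda(\gamma_0;\chi_B)$, so the hypothesis $\Lambda_0(D)\geq\Lambda(\gamma_0-\beta\chi_B)$ is stronger than the linearized test and $B\subset D$ follows; the analogous sandwich $\Lambda(\gamma_0+\beta\chi_B)\geq\Lambda(\gamma_0)+\beta\DLambda(\gamma_0;\chi_B)$ points the wrong way, so the perfectly conducting reverse implication is the genuinely new ingredient. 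I expect the main obstacle to be precisely this localized-potential step: constructing currents compatible with the extreme inclusion and controlling the remainder — in the insulating case the extension energy $\int_D\gamma_0|\nabla Eu_{f_k}^{\sigma_0}|^2$, which must be shown to grow more slowly than the concentrated energy in $W'$ even though the current is forced to pile up near $\partial D$ — while ensuring through Theorem~\ref{thm:uconverge} that the estimates and the potentials survive the extreme limit.
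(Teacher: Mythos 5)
Your proposal is correct and follows essentially the same route as the paper: the forward implications by comparing the $\alpha$-truncated conductivity $\gamma_{\alpha,D}$ with $\gamma_0\mp\beta\chi_B$, applying Lemma~\ref{lemma:mono}, and passing to the limit via Theorem~\ref{thm:uconverge}; the reverse implications by contraposition, the limiting monotonicity bounds of Corollary~\ref{coro:mononew}, and localized potentials that survive the extreme inclusion (Lemmas~\ref{lemma:locpot} and~\ref{lemma:locpot2}) — and your observation that the insulating reverse direction also follows from Theorem~\ref{thm:monofast} via $\Lambda(\gamma_0-\beta\chi_B)\geq\Lambda(\gamma_0)-\beta\DLambda(\gamma_0;\chi_B)$ is a valid shortcut not used in the paper. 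One minor imprecision: in the conducting case the derivable remainder over $D$ is $K\int_D\abs{\nabla u_f^{\gamma_0}}^2\,\di x$ with the \emph{non-extreme} potential (cf.\ Lemma~\ref{lemma:monoappendix}(ii)), not a term that vanishes because $\nabla u_f^{\sigma_\infty}=0$ in $D$, but this does not matter since localization kills it anyway.
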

	Finally, we consider the setting where the conductivity may be both larger and smaller than the background conductivity $\gamma_0$, and it may also have parts that are perfectly insulating and parts that are perfectly conducting. 
	\begin{definition}[indefinite inclusion] \label{def:indefinite}
		Suppose a set $D\in\mathcal{A}$ can be decomposed as
		\begin{equation*}
			D = D^+ \cup D^-, \qquad D^- = D_\textup{F}^- \cup D_0, \qquad \text{and} \qquad D^+ = D_\textup{F}^+ \cup D_\infty,
		\end{equation*}
		where we assume:
		\begin{enumerate}[(i)]
			\item $D_0\cup D_\infty$ satisfies Assumption~\ref{assump}, with $D_0$ in place of $C_0$ and $D_\infty$ in place of $C_\infty$.
			\item $D_{\textup{F}}^{\pm}$ are measurable and mutually disjoint with the set $D_0\cup D_\infty$.
			\item For every open neighborhood $W$ of $x\in \partial D$, there exists a relatively open set $V\subset D$ that intersects $\partial D$ and either $V \subset D^-\cap W$ or $V\subset D^+ \cap W$. \label{indefiniteassump4}
		\end{enumerate}
		Let $\gamma_0\in L^\infty_+(\Omega)$ satisfy the UCP and define $0\leq \gamma \leq \infty$ by
		\begin{equation*}
		\gamma := \begin{cases}
		0 & \text{in } D_0, \\
		\infty & \text{in } D_\infty, \\
		\gamma_- & \text{in } D_\textup{F}^-, \\
		\gamma_+ & \text{in } D_\textup{F}^+, \\
		\gamma_0 & \text{in } \Omega\setminus D,
		\end{cases}
		\end{equation*}
		where $\gamma_{\pm}\in L^\infty_+(D_\textup{F}^\pm)$ satisfy $\sup_{D_\textup{F}^-}(\gamma_--\gamma_0)<0$ and $\inf_{D_\textup{F}^+}(\gamma_+-\gamma_0)>0$, i.e.\ $\gamma$ is essentially bounded away from $\gamma_0$ in $D$. Then $D$ is called an \emph{indefinite inclusion} with respect to the background conductivity $\gamma_0$.
	\end{definition}
	This prelude leads to the following general approach to finding inclusions in EIT.
	\begin{theorem} \label{thm:general}
		Let $\gamma$ and $D$ be as in Definition~\ref{def:indefinite}. For $C\in\mathcal{A}$, it holds
		\begin{equation*}
		D\subseteq C \qquad \text{if and only if} \qquad \Lambda_0(C) \geq \Lambda(\gamma) \geq \Lambda_\infty(C).
		\end{equation*}
		In particular, $D = \cap\{ C\in\mathcal{A} \mid \Lambda_0(C) \geq \Lambda(\gamma) \geq \Lambda_\infty(C) \}$.
	\end{theorem}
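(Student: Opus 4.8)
The plan is to prove the equivalence $D\subseteq C\Leftrightarrow\Lambda_0(C)\ge\Lambda(\gamma)\ge\Lambda_\infty(C)$ by establishing the two implications separately and then reading off the intersection formula. For the forward implication I would compare the indefinite conductivity $\gamma$ against the two definite reference conductivities $\sigma(\gamma_0,C,\emptyset)$ and $\sigma(\gamma_0,\emptyset,C)$ via monotonicity of $\Lambda$; for the converse I would argue by contraposition, using a localized-potentials construction to exhibit a Neumann datum that violates one of the two operator inequalities whenever part of $D$ escapes $C$. Throughout, Theorem~\ref{thm:uconverge} is the device that renders the extreme values $0$ and $\infty$ admissible in otherwise standard monotonicity arguments: every extreme ND map is an operator-norm limit of ordinary ones, so Loewner inequalities between extreme maps are obtained by truncating, invoking the classical monotonicity relations for $L^\infty_+$ conductivities, and passing to the limit.

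\emph{Forward direction.} Assume $D\subseteq C$. Then $\Omega\setminus C\subseteq\Omega\setminus D$, so $\gamma=\gamma_0$ off $C$, while on $C$ the values of $\gamma$ lie in $[0,\infty]$. Comparing with the perfectly insulating reference, the conductivity $\sigma(\gamma_0,C,\emptyset)$ is pointwise no larger than $\gamma$ (it equals $0\le\gamma$ on $C$ and equals $\gamma_0=\gamma$ off $C$), which should yield $\Lambda_0(C)\ge\Lambda(\gamma)$; symmetrically $\sigma(\gamma_0,\emptyset,C)$ is pointwise no smaller than $\gamma$, giving $\Lambda(\gamma)\ge\Lambda_\infty(C)$. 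The only subtlety is that all three conductivities may be extreme, so the Loewner comparison is not the classical one; I would justify it by the generic extreme-inclusion monotonicity estimates of Appendix~\ref{sec:appendixA}, which are themselves derived from the $L^\infty_+$ case through the $\epsilon$-truncation of Theorem~\ref{thm:uconverge}.

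\emph{Reverse direction.} I argue the contrapositive: if $D\not\subseteq C$, one of the inequalities fails. As $C$ is closed with unbounded connected complement while $D\subset\Omega$ is bounded, the hypothesis $D\not\subseteq C$ forces $\partial D\setminus C\neq\emptyset$ (otherwise $D^\circ\setminus C$ would be nonempty and relatively clopen in the connected set $\rum{R}^d\setminus C$, hence all of it, contradicting boundedness of $D$). Fixing $x\in\partial D\setminus C$ and a neighborhood $W\subseteq\Omega\setminus C$ and applying Definition~\ref{def:indefinite}\,(iii) produces a relatively open, sign-definite patch $V\subset D\cap W\subseteq\Omega\setminus C$ meeting $\partial D$, say $V\subset D^+$ (the case $V\subset D^-$ being symmetric with the roles of $\Lambda_0$ and $\Lambda_\infty$ interchanged). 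On $V$ the conductivity exceeds the background, with $\gamma\ge\gamma_0+\delta$ on $V\cap D_\textup{F}^+$ and $\gamma=\infty$ on $V\cap D_\infty$. I would then construct localized potentials for the perfectly conducting reference $\sigma(\gamma_0,\emptyset,C)$ — built again through truncation and Theorem~\ref{thm:uconverge}, and legitimate because $\gamma_0$ satisfies the UCP and $C$ has connected complement, so that $V$ is reachable from $\Gamma$ through $\Omega\setminus C$ — yielding Neumann data $f_k$ whose gradient energy concentrates in $V$ while staying negligible on the oppositely signed set $\overline{D^-}\setminus C$. Inserting these into the extreme monotonicity estimate of Appendix~\ref{sec:appendixA}, the dominant positive contribution from $V$ forces $\langle(\Lambda_\infty(C)-\Lambda(\gamma))f_k,f_k\rangle>0$ for large $k$, i.e.\ $\Lambda(\gamma)\ge\Lambda_\infty(C)$ fails.

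The hard part, and the reason Theorem~\ref{thm:uconverge} is indispensable, is making the localized-potentials-plus-monotonicity machinery run when the conductivity literally attains $0$ and $\infty$: both the energy identities underlying the Loewner estimates and the blow-up construction must be validated in the constrained space $\mathscr{H}_\diamond$, and this is exactly what the operator-norm convergence supplies by reducing each step to a uniform limit of $L^\infty_+$ statements, thereby generalizing the non-extreme results of \cite{Harrach13,Garde_2019}. A secondary, geometric difficulty is guaranteeing that the definite patch $V$ is separated from the opposite-sign region, so that the concentrated energy is not cancelled; this is precisely the content of the local structure condition in Definition~\ref{def:indefinite}\,(iii) together with the connectedness built into $\mathcal{A}$. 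Finally, the intersection formula is immediate: the equivalence shows $\{C\in\mathcal{A}\mid\Lambda_0(C)\ge\Lambda(\gamma)\ge\Lambda_\infty(C)\}=\{C\in\mathcal{A}\mid D\subseteq C\}$, and since $D\in\mathcal{A}$ belongs to this family and is contained in every member of it, the intersection equals $D$.
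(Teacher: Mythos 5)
Your forward direction and the intersection formula coincide with the paper's argument: truncate $\gamma$ to $\gamma_\epsilon\in L^\infty_+(\Omega)$, observe that $D\subseteq C$ gives the pointwise squeeze $\gamma_{\epsilon,C}\leq\gamma_\epsilon\leq\gamma_{\epsilon^{-1},C}$ for small $\epsilon$, apply the classical monotonicity of Lemma~\ref{lemma:mono}, and pass to the limit via Theorem~\ref{thm:uconverge}. Your fallback suggestion of justifying the forward comparison directly through Appendix~\ref{sec:appendixA} would not work as stated, since no estimate there compares $\Lambda_0(C)$ with $\Lambda(\gamma)$ when $\gamma$ carries its own extreme inclusions strictly inside $C$ (Lemma~\ref{lemma:monoappendix}(iii), for instance, only handles ``with versus without'' insulating parts, not two nested nonempty ones); the truncate-and-limit route is the one that closes.

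The reverse direction has a genuine gap at exactly the point where the difficulty sits. First, distinguishing only $V\subset D^+$ from $V\subset D^-$ is insufficient: the paper must refine this to four subcases according to whether the escaping patch lies in $D_\textup{F}^+$, $D_\textup{F}^-$, $D_\infty$, or $D_0$ (using the Lipschitz regularity of $\partial D_0$ and $\partial D_\infty$ to sharpen Definition~\ref{def:indefinite}(iii)), because the blow-up mechanism is structurally different in the finite-contrast and extreme subcases. If the concentration ball lies in $D_\infty$, your inequality ``$\gamma\geq\gamma_0+\delta$ on $V$'' is vacuous, and the divergent term is instead $-\inf_{D_\infty}(\gamma_0)\int_{D_\infty}\abs{\nabla \tilde{u}_{2,i}}^2\,\di x$ coming from Lemma~\ref{lemma:monoappendix}(ii) applied to a potential whose conductivity has the conducting inclusion removed; a patch meeting both $D_\textup{F}^+$ and $D_\infty$ gives no control over which term carries the diverging energy. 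Second, ``inserting these into the extreme monotonicity estimate of Appendix~\ref{sec:appendixA}'' conceals the actual work: there is no single estimate comparing $\Lambda(\gamma)$ with $\Lambda_\infty(C)$. The paper telescopes, e.g.\ in its case (a),
\begin{equation*}
\Lambda(\gamma)-\Lambda_\infty(C)=\bigl(\Lambda(\gamma)-\Lambda(\gamma_4)\bigr)+\bigl(\Lambda(\gamma_4)-\Lambda(\gamma_2)\bigr)+\bigl(\Lambda(\gamma_2)-\Lambda(\gamma_0)\bigr)+\bigl(\Lambda(\gamma_0)-\Lambda_\infty(C)\bigr),
\end{equation*}
bounds each difference by a different part of Lemma~\ref{lemma:monoappendix}, and therefore needs the gradient energies of several potentials for \emph{distinct} auxiliary conductivities to localize simultaneously with the \emph{same} currents $f_i$; this is supplied by Lemma~\ref{lemma:locpot2} together with \cite[Lemma~3.7]{Harrach13}, not by a single localized-potential construction for $\sigma(\gamma_0,\emptyset,C)$. (Your preliminary observation that $D\not\subseteq C$ forces $\partial D\setminus C\neq\emptyset$ is correct and consistent with what the paper uses implicitly.)
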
	
	
	\subsection{Additional remarks on the main results} \label{sec:remarks}
	
	This subsection provides some additional comments, examples, and extensions on the definitions and results in Section~\ref{sec:main}.
	
	\subsubsection*{On Theorem~\ref{thm:uconverge}:}
	\begin{enumerate}[(i)]
	\item We only insist on a positive distance between $\partial C_0$, $\partial C_\infty$, and $\partial \Omega$ for conceptual and notational convenience in Theorem~\ref{thm:uconverge} and its proof. In fact, one could deduce the variational form \eqref{eq:weakform} and the estimates in the proof of Theorem~\ref{thm:uconverge} by replacing the conditions $C_0,C_\infty\subset \Omega$ and (ii) in Assumption~\ref{assump} by the following weaker ones: $C_0,C_\infty\subset \overline{\Omega}$, $C_0^\circ \cap C_\infty^\circ = \emptyset$, $C\cap \Gamma = \emptyset$, and $C$ and $\Omega\setminus E$ have Lipschitz boundaries for $E\in\{C_0,C_\infty,C\}$. Under these weaker assumptions, the boundary conditions in the original boundary value problem \eqref{eq:continuum} need to be slightly modified: the current density $\nu \cdot (\sigma \nabla u)$ vanishes on $\partial (\Omega \setminus C_0) \setminus (\Gamma \cup \partial C_\infty)$ and its integral is zero over the part of $\partial C_i$ that touches $\Omega \setminus C$.          
 		\item For the sake of keeping the presentation of Theorem~\ref{thm:uconverge} and its proof clean, we have not included the precise dependence of the constant $K$ on $\varsigma$. Actually, $K$ only depends on $\varsigma$ as a multiple of a positive power of $\sup(\varsigma)$ and a negative power of $\inf(\varsigma)$. Furthermore, $K$ depends on $\Omega$, $C_0$, and $C_\infty$ via the constants that appear in the corresponding trace theorems and Poincar\'e inequalities. \label{remark:converge1}
		\item Let $(\epsilon_n)\subset (0,\infty)$ with $\lim_{n\to \infty} \epsilon_n = 0$. Consider a sequence $(\sigma_n)\subset L^\infty_+(\Omega)$ satisfying $\sigma_n \equiv \varsigma$ in $\Omega\setminus C$ and
		%
		\begin{align*}
		K^{-1}\epsilon_n \leq \sigma_n &\leq K \epsilon_n \textup{ in } C_0, \\
		K^{-1}\epsilon_n^{-1} \leq \sigma_n &\leq K \epsilon_n^{-1} \textup{ in } C_\infty
		\end{align*}
                for some constant $K\geq 1$.
		Then Theorem~\ref{thm:uconverge} and item \eqref{remark:converge1} above imply the convergence of the associated potentials in $H^1_\diamond(\Omega\setminus C_0)$ and of the corresponding ND maps in the operator norm of $\mathscr{L}(L^2_\diamond(\Gamma))$, both with the rate $\epsilon_n^{1/2}$.
		\item Consider a sequence $(\sigma_n)\subset L^\infty_+(\Omega)$ satisfying $\sigma_n \equiv \varsigma$ in $\Omega\setminus C_0$ and 
		\begin{equation} \label{eq:limitcond1}
		\lim_{n\to \infty}\sup_{C_0}(\sigma_n) \to 0.
		\end{equation}
		Theorem~\ref{thm:uconverge} with $C_\infty=\emptyset$ and a truncated conductivity coefficient with the constant value $\sup_{C_0}(\sigma_n)$ in $C_0$, combined with the monotonicity principles in Lemma~\ref{lemma:monoappendix} and the squeeze theorem, demonstrates that $\Lambda(\sigma_n)$ converges in the operator norm. The same conclusion also holds if instead $\sigma_n\equiv \varsigma$ in $\Omega\setminus C_\infty$ and
		\begin{equation} \label{eq:limitcond2}
		\lim_{n\to \infty}\inf_{C_\infty}(\sigma_n)\to\infty.
		\end{equation}
		Hence, in the `one-sided' cases with \emph{either} decay to zero in $C_0$ {\em or} growth to infinity in $C_\infty$, the spatial variations in the decay (or growth) rate within the region $C_0$ (or $C_\infty$) are irrelevant for the corresponding ND map to converge in the operator norm, as long as the convergence of the conductivity coefficient is uniform in the sense of \eqref{eq:limitcond1} and \eqref{eq:limitcond2}. \label{remark:NDconverge}
	      \item It is not obvious whether a convergence result like the one presented in item \eqref{remark:NDconverge} holds if the sequence of conductivity coefficients converges (necessarily in a nonmonotonic way) to a limit conductivity with both perfectly insulating and perfectly conducting inclusions. However, by considering two iterated limits, one for $C_0$ and one for $C_\infty$, the ND maps converge in the operator norm if conditions of the form \eqref{eq:limitcond1} and \eqref{eq:limitcond2} are satisfied and the conductivities in the double-sequence equal $\varsigma$ in $\Omega\setminus C$. Unfortunately, this does not guarantee that one can pass to the limit simultaneously in both $C_0$ and $C_\infty$ with spatially varying rates.
	\end{enumerate}
	
	\subsubsection*{On Corollary~\ref{coro:extended}:}
	\begin{enumerate}[(i)]
		\setcounter{enumi}{5}
		\item In general, the limiting behavior of an electric potential in $C_0$ is ambiguous, as it depends on spatial variations in the decay rate for the conductivity in $C_0$. Hence, to obtain the simple characterization in Corollary~\ref{coro:extended}, it is required that the decay rate is the same throughout $C_0$. In Appendix~\ref{sec:appendixB} we give an example illustrating this ambiguity by letting the decay rate of the conductivity be governed by $\epsilon$ in some parts of $C_0$ and by $\epsilon^2$ in other parts. As a side note, the form of the limit potential inside $C_\infty$ is unambiguous as it is uniquely defined by the variational problem \eqref{eq:weakform}.
	\end{enumerate}
	
	\subsubsection*{On Definition~\ref{defi:ucp}:}
	\begin{enumerate}[(i)]
		\setcounter{enumi}{6}
		\item As a simple example, a piecewise analytic conductivity coefficient $\varsigma \in L^\infty_+(\Omega)$ satisfies the UCP; see~e.g.~\cite{Kohn1985} for the case $\partial \Omega \in \mathscr{C}^\infty$.
	\end{enumerate}

	\subsubsection*{On Definition~\ref{def:indefinite}:}
	\begin{enumerate}[(i)]
		\setcounter{enumi}{7}
		\item The last assumption \eqref{indefiniteassump4} in Definition~\ref{def:indefinite} implies the reasonable property that the sign of $\gamma-\gamma_0$ cannot change arbitrarily often near $\partial D$; see e.g.~\cite[Remark~2(iv)]{Garde_2019} for an example of such behavior. This condition is required to guarantee the existence of relatively open subsets of $D$ near $\partial D$ that only belong to either $D^+$ or $D^-$. Moreover, the Lipschitz regularity of $\partial D_0$ and $\partial D_\infty$ guarantees that near $\partial D$ we can choose relatively open subsets of $D$  that only belong to one of the subregions $D_\textup{F}^+$, $D_\textup{F}^-$, $D_0$, and $D_\infty$.
	\end{enumerate}

	\subsubsection*{On Theorem~\ref{thm:monofast}, Theorem~\ref{thm:monoslow}, and Theorem~\ref{thm:general}:}
	\begin{enumerate}[(i)]
		\setcounter{enumi}{8}
		\item If $\mathcal{B}$ is the set of open balls in $\Omega$, for the appropriate values of $\beta$ in Theorem~\ref{thm:monofast} and Theorem~\ref{thm:monoslow}, we have 
		\begin{align*}
		D^\circ &= \cup\{ B\in\mathcal{B} \mid \Lambda_0(D) \geq \Lambda(\gamma_0)-\beta \DLambda(\gamma_0;\chi_B) \} \\
		&= \cup\{ B\in\mathcal{B} \mid \Lambda_0(D) \geq \Lambda(\gamma_0-\beta\chi_B) \} \\
		&= \cup\{ B\in\mathcal{B} \mid \Lambda(\gamma_0) + \beta \DLambda(\gamma_0;\chi_B) \geq \Lambda_\infty(D) \} \\
		&= \cup\{ B\in\mathcal{B} \mid \Lambda(\gamma_0 + \beta\chi_B) \geq \Lambda_\infty(D) \}.
		\end{align*}
		Based on these characterizations, we readily obtain $D$ as the closure of $D^\circ$, if necessary. In a numerical implementation, one may wish to replace ``balls'' by ``pixels/voxels/triangulation'' or other appropriate open sets.
		\item If the set $D$ does not have a connected complement, there are parts of $\partial D$ that cannot be reached by the localized potentials in Lemma~\ref{lemma:locpot} and Lemma~\ref{lemma:locpot2} without $U$ intersecting $D$, and one would only obtain a reconstruction up to the outer shape of $D$; see e.g.\ \cite{Harrach13,Garde_2019}.
		\item According to the proof of Theorem~\ref{thm:general}, its result also applies to the definite cases in the following way: if $D^+ = \emptyset$, we only need to consider the inequality $\Lambda_0(C)\geq \Lambda(\gamma)$, and if $D^- = \emptyset$, we only need to consider $\Lambda(\gamma)\geq \Lambda_\infty(C)$.
		\item Note that in all the extreme cases, we assume that the background conductivity remains essentially bounded away from zero and infinity. It remains an open problem to figure out how to employ the monotonicity method if the conductivity tends to zero (a degenerate problem) or tends to infinity (a singular problem) near the inclusion boundary.
	\end{enumerate}
	
	\section{Proof of Theorem~\ref{thm:uconverge}} \label{sec:NDconv}
	
	For sake of brevity, we write $u = u_f^\sigma$ and $u_\epsilon = u_f^{\sigma_\epsilon}$. Recall that $\mathscr{H}_\diamond = \mathscr{H}_\diamond(C_0,C_\infty)$ is the function space related to $u$; the space related to $u_\epsilon$ is $H^1_\diamond(\Omega)$ since the corresponding conductivity is bounded away from zero and infinity in $\Omega$. As in \eqref{eq:innerprodsigma}, the inner product $\inner{v,w}_\varsigma$ is defined via an integral over $\Omega\setminus C_0$. To more easily distinguish between the weak formulations of $u$ and $u_\epsilon$, we write down \eqref{eq:weakform} specifically for $u_\epsilon$ here:
	\begin{equation}
	\int_{\Omega} \sigma_\epsilon \nabla u_\epsilon \cdot \nabla \overline{v}\,\di x = \inner{f,v|_\Gamma}, \quad \forall v\in H_\diamond^1(\Omega). \label{eq:weakformeps}
	\end{equation} 

        First we give some estimates related to $u_\epsilon$ in terms of the $L^2(\Gamma)$-norm of $f$. Using \eqref{eq:weakformeps}, the ``Cauchy inequality with $\varepsilon$'' with $\alpha>0$ taking the role of $\varepsilon$, and the trace theorem in $\Omega\setminus C$ leads to
	\begin{align*}
	\int_{\Omega} \sigma_\epsilon\abs{\nabla u_\epsilon}^2\,\di x &= \inner{f,u_\epsilon|_\Gamma} \leq \alpha\norm{f}_{L^2(\Gamma)}^2 + \frac{1}{4\alpha}\norm{u_\epsilon|_{\partial\Omega}}^2_{H^{1/2}(\partial\Omega)} \\
	&\leq \alpha\norm{f}_{L^2(\Gamma)}^2 + \frac{K}{4\alpha} \int_{\Omega\setminus C} \varsigma \abs{\nabla u_\epsilon}^2\,\di x.
	\end{align*}
	Choosing $\alpha = K$ and restructuring gives
	\begin{equation*}
	\frac{3}{4}\int_{\Omega\setminus C} \varsigma\abs{\nabla u_\epsilon}^2\,\di x + \epsilon^{-1}\int_{C_\infty} \varsigma\abs{\nabla u_\epsilon}^2\,\di x + \epsilon\int_{C_0} \varsigma\abs{\nabla u_\epsilon}^2\,\di x \leq K \norm{f}_{L^2(\Gamma)}^2.
	\end{equation*}
	As direct consequences,
	\begin{align}
	\norm{\nabla u_\epsilon}_{L^2(C_\infty;\rum{C}^d)} &\leq \epsilon^{1/2} K \norm{f}_{L^2(\Gamma)}, \label{eq:ueps_infty} \\
	\norm{\epsilon^{1/2}\varsigma\nabla u_\epsilon}_{L^2(C_0;\rum{C}^d)} &\leq K \norm{f}_{L^2(\Gamma)}, \label{eq:ueps_0} 
	\end{align}
        where $K>0$ remains a generic constant independent of $f$ and $\epsilon$. These bounds will be used later to finalize the proof.
	
	We proceed now to prove the first part of Theorem~\ref{thm:uconverge}. Let $P = P(\varsigma,C_0,C_\infty)$ and $P_\perp = P_\perp(\varsigma,C_0,C_\infty)$. Since the projections $P$ and $P_\perp$ are self-adjoint and $u \in \mathscr{H}_\diamond$, Cauchy--Schwarz' inequality yields
	\begin{align}
	\norm{u-u_\epsilon}_\varsigma^2 &=  \norm{P(u-u_\epsilon)}_\varsigma^2 + \norm{P_\perp(u-u_\epsilon)}_\varsigma^2 \notag \\  
	&\leq \inner{u-u_\epsilon,P(u-u_\epsilon)}_\varsigma  + \norm{P_\perp u_\epsilon}_\varsigma \norm{u-u_\epsilon}_\varsigma, \label{eq:estu} 
	\end{align} 
        where we have suppressed the notation indicating that the projections are applied to the restriction of $u_\epsilon$ onto $\Omega\setminus C_0$. The rest of the proof will focus on estimating the two terms on the right-hand side of \eqref{eq:estu}, with the aim of finally obtaining the desired convergence result by dividing with $\norm{u-u_\epsilon}_\varsigma$.

	\subsection{Estimating the first term in \eqref{eq:estu}}
	
	By using the fact $\nabla P(u-u_\epsilon)=0$ in $C_\infty^\circ$, it follows that
	\begin{align}
	\inner{u-u_\epsilon,P(u-u_\epsilon)}_\varsigma &= \int_{\Omega\setminus C} \varsigma \nabla u \cdot \nabla \overline{P(u-u_\epsilon)}\,\di x - \int_{\Omega\setminus C_0} \sigma_\epsilon \nabla u_\epsilon \cdot \nabla \overline{P(u-u_\epsilon)}\,\di x \notag \\
	&= \int_{C_0} \epsilon \varsigma \nabla u_\epsilon \cdot \nabla \overline{P(u-u_\epsilon)}\,\di x, 
	\label{eq:firstterm}
	\end{align} 
	where the second step is based on the weak formulations \eqref{eq:weakform} and \eqref{eq:weakformeps} for $u$ and $u_\epsilon$, respectively.
    
	We adopt the notation that $\varsigma\nabla u_\epsilon$ equals $(\varsigma\nabla u_\epsilon)_\textup{int}$ in $C_0^\circ$ and $(\varsigma\nabla u_\epsilon)_\textup{ext}$ in $\Omega\setminus C_0$ to respect the jump-relation $\nu\cdot(\varsigma \nabla u_{\epsilon})_\textup{ext} = \epsilon\nu\cdot(\varsigma \nabla u_{\epsilon})_\textup{int}$ on $\partial C_0$ induced by \eqref{eq:continuum}. Since $u_\epsilon$ satisfies the conductivity equation in $C_0^\circ$, we may rewrite \eqref{eq:firstterm} as
	\begin{align}
	\inner{u-u_\epsilon,P(u-u_\epsilon)}_\varsigma &= \epsilon^{1/2} \inner{\epsilon^{1/2}\nu \cdot (\varsigma \nabla u_\epsilon)_\textup{int}|_{\partial C_0}, P(u-u_\epsilon)|_{\partial C_0}}_{\partial C_0}, \label{eq:firstterm2}
	\end{align}
	where $\inner{\cdot,\cdot}_{\partial C_0} = \inner{\cdot,\cdot}_{H^{-1/2}(\partial C_0),H^{1/2}(\partial C_0)}$ is the dual bracket on $\partial C_0$. Now we can straightforwardly estimate \eqref{eq:firstterm2} using the boundedness of the dual bracket, the trace theorem in $\Omega\setminus C_0$, and the boundedness of $P$,
	\begin{align}
	\inner{u-u_\epsilon,P(u-u_\epsilon)}_\varsigma &\leq \epsilon^{1/2}\norm{\epsilon^{1/2}\nu\cdot (\varsigma\nabla u_{\epsilon})_\textup{int}|_{\partial C_0}}_{H^{-1/2}(\partial C_0)}\norm{P(u-u_\epsilon)|_{\partial C_0}}_{H^{1/2}(\partial C_0)} \notag \\
	&\leq K\epsilon^{1/2}\norm{\epsilon^{1/2}\nu\cdot (\varsigma\nabla u_{\epsilon})_\textup{int}|_{\partial C_0}}_{H^{-1/2}(\partial C_0)}\norm{u-u_\epsilon}_\varsigma. \label{eq:firstterm3}
	\end{align}
	Taking the trace of the normal component is a bounded map from $H_\textup{div}(C_0^\circ)$ to $H^{-1/2}(\partial C_0)$. Hence, the equality $\nabla\cdot(\epsilon^{1/2}\varsigma\nabla u_\epsilon) = \nabla\cdot(\epsilon\varsigma\nabla u_\epsilon) = 0$ in $C_0^\circ$ implies with \eqref{eq:ueps_0} that
	\begin{align}
	\norm{\epsilon^{1/2}\nu\cdot (\varsigma\nabla u_{\epsilon})_\textup{int}|_{\partial C_0}}_{H^{-1/2}(\partial C_0)} &\leq K \left( \norm{\epsilon^{1/2}\varsigma \nabla u_\epsilon}_{L^2(C_0;\rum{C}^d)}^2 + \norm{\nabla\cdot (\epsilon^{1/2}\varsigma\nabla u_\epsilon)}_{L^2(C_0)}^2 \right)^{1/2} \notag \\
	&\leq K\norm{f}_{L^2(\Gamma)}. \label{eq:est2}
	\end{align}
	Combining \eqref{eq:firstterm3} and \eqref{eq:est2} handles the first term of \eqref{eq:estu}. 
	
	\subsection{Estimating the second term in \eqref{eq:estu}} \label{sec:est2}
    The following lemma gives the needed estimate for $\norm{P_\perp u_\epsilon}_\varsigma$ in the second term on the right-hand side of \eqref{eq:estu}. However, the lemma will also be useful in proving some other results in what follows.
	\begin{lemma} \label{lemma:perp}
		Let $C = C_0\cup C_\infty$ satisfy Assumption~\ref{assump}, $\varsigma\in L^\infty_+(\Omega)$, and $P_\perp = P_\perp(\varsigma,C_0,C_\infty)$. If $v \in H_\diamond^1(\Omega\setminus C_0)$ satisfies $\nabla\cdot(\varsigma\nabla v) = 0$ in $\Omega\setminus C$, then there is a constant $K>0$ (independent of $v$) such that
		\begin{equation*}
		\int_{\Omega\setminus C_0} \varsigma \abs{\nabla P_\perp v}^2 \,\di x \leq K\int_{C_\infty} \abs{\nabla v}^2\,\di x.
		\end{equation*}
	\end{lemma}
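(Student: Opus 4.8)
The plan is to exploit the orthogonality of the decomposition $v = Pv + g$, where I write $g := P_\perp v$. Since $Pv\in\mathscr{H}_\diamond$ and $g\in\mathscr{H}_\diamond^\perp$ are $\inner{\cdot,\cdot}_\varsigma$-orthogonal, the energy of $g$ equals the off-diagonal pairing,
\[
\norm{g}_\varsigma^2 = \inner{g,v}_\varsigma = \int_{\Omega\setminus C_0}\varsigma\,\nabla g\cdot\nabla\overline{v}\,\di x .
\]
I would then split this integral along $\Omega\setminus C_0 = C_\infty\cup(\Omega\setminus C)$. On $C_\infty^\circ$ one has $\nabla g = \nabla v$, because $\nabla Pv = 0$ there, so the contribution of $C_\infty$ is exactly $\int_{C_\infty}\varsigma\abs{\nabla v}^2\,\di x \le \sup(\varsigma)\int_{C_\infty}\abs{\nabla v}^2\,\di x$, which is already of the desired form. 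Everything therefore reduces to controlling $\int_{\Omega\setminus C}\varsigma\,\nabla g\cdot\nabla\overline{v}\,\di x$ by the same right-hand side.

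To handle this term, I would first test the defining relation $\inner{g,\phi}_\varsigma = 0$ against $\phi\in\Ccinf(\Omega\setminus C)$ (extended by zero, so that $\phi\in\mathscr{H}_\diamond$); this gives $\nabla\cdot(\varsigma\nabla g) = 0$ in $\Omega\setminus C$, whence $\varsigma\nabla g\in H_{\textup{div}}(\Omega\setminus C)$ admits a normal trace in $H^{-1/2}(\partial(\Omega\setminus C))$ and Green's formula yields
\[
\int_{\Omega\setminus C}\varsigma\,\nabla g\cdot\nabla\overline{v}\,\di x = \inner{\nu\cdot(\varsigma\nabla g),\,v}_{\partial(\Omega\setminus C)} .
\]
The key step is to show that this boundary pairing only sees $\partial C_\infty$. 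Given $v$, I would build a test function $\phi_v\in\mathscr{H}_\diamond$ that coincides with $v$ on $\partial\Omega\cup\partial C_0$ and equals the constant $v_i := \langle v\rangle_{C_i}$ on each component $C_i$ of $C_\infty$; this is possible precisely because membership in $\mathscr{H}_\diamond$ only forces $\phi$ to be constant on the components of $C_\infty$, and $v$ already has zero mean on $\Gamma$. Applying the same Green's formula to $\phi_v$ shows $\inner{\nu\cdot(\varsigma\nabla g),\,\phi_v}_{\partial(\Omega\setminus C)} = 0$, and since the pieces $\partial\Omega$, $\partial C_0$, $\partial C_i$ lie at positive distance from one another (Assumption~\ref{assump}), the duality pairing splits across them and subtracting $\phi_v$ removes the $\partial\Omega$ and $\partial C_0$ contributions, leaving
\[
\inner{\nu\cdot(\varsigma\nabla g),\,v}_{\partial(\Omega\setminus C)} = \sum_i \inner{\nu\cdot(\varsigma\nabla g),\,v-v_i}_{\partial C_i} .
\]

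Each summand I would estimate by $\norm{\nu\cdot(\varsigma\nabla g)}_{H^{-1/2}(\partial C_i)}\,\norm{v-v_i}_{H^{1/2}(\partial C_i)}$. The first factor is controlled by the $H_{\textup{div}}(\Omega\setminus C)$-norm of $\varsigma\nabla g$, hence by $K\norm{g}_\varsigma$ after using $\nabla\cdot(\varsigma\nabla g)=0$ and the norm equivalences. For the second factor, the choice $v_i=\langle v\rangle_{C_i}$ together with the trace theorem on $C_i^\circ$ and the Poincaré inequality gives $\norm{v-v_i}_{H^{1/2}(\partial C_i)}\le K\norm{\nabla v}_{L^2(C_i)}$; summing over the finitely many components and applying Cauchy--Schwarz produces the bound $K\norm{\nabla v}_{L^2(C_\infty)}\norm{g}_\varsigma$. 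Collecting the two contributions yields $\norm{g}_\varsigma^2 \le \sup(\varsigma)\norm{\nabla v}_{L^2(C_\infty)}^2 + K\norm{\nabla v}_{L^2(C_\infty)}\norm{g}_\varsigma$, and a final Young's inequality absorbs the factor $\norm{g}_\varsigma$ on the right to give the claim. I expect the main obstacle to be the rigorous reduction of the boundary pairing to $\partial C_\infty$: it relies on constructing $\phi_v\in\mathscr{H}_\diamond$ and on the $\Gamma$-mean-freeness of $v$, which is exactly what makes the $\partial\Omega$ contribution vanish, while the Lipschitz regularity and mutual separation of the inclusion boundaries in Assumption~\ref{assump} are what legitimize the $H_{\textup{div}}$ normal traces, Green's formula, and the splitting of the duality pairing.
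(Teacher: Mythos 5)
Your proof is correct, but it takes a genuinely different route from the paper's. The paper argues by exhibiting an explicit competitor: it solves an auxiliary Dirichlet problem for $w\in\mathscr{H}_\diamond$ with data $v$ on $\partial(\Omega\setminus C_0)$ and the componentwise mean $v^{C_\infty}$ in $C_\infty$, observes that $v-w$ is then $\varsigma$-harmonic in $\Omega\setminus C$ with nontrivial Dirichlet data only on $\partial C_\infty$, bounds $\norm{v-w}_{H^1(\Omega\setminus C_0)}$ by $K\norm{\nabla v}_{L^2(C_\infty;\rum{C}^d)}$ via continuous dependence plus the trace theorem and Poincar\'e inequality on each component of $C_\infty$, and concludes in one line from the best-approximation property $\norm{P_\perp v}_\varsigma=\inf_{\hat v\in\mathscr{H}_\diamond}\norm{v-\hat v}_\varsigma\leq\norm{v-w}_\varsigma$. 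You instead work directly with $g=P_\perp v$: orthogonality gives the energy identity, the equation $\nabla\cdot(\varsigma\nabla g)=0$ in $\Omega\setminus C$ follows from $g\perp\mathscr{H}_\diamond$ alone, and the $H_{\textup{div}}$ normal trace together with the lifting $\phi_v\in\mathscr{H}_\diamond$ reduces everything to an $H^{-1/2}$--$H^{1/2}$ pairing on $\partial C_\infty$, closed by Young's inequality. The decisive common ingredient is identical in both proofs: subtracting the componentwise mean of $v$ on $C_\infty$ and using trace plus Poincar\'e to get $\norm{v-v_i}_{H^{1/2}(\partial C_i)}\leq K\norm{\nabla v}_{L^2(C_i;\rum{C}^d)}$. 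The paper's version is shorter and avoids the trace machinery for $H_{\textup{div}}$ and the splitting of the duality pairing across separated boundary components (both of which you correctly justify via Assumption~\ref{assump}); your version is longer but, notably, never uses the hypothesis $\nabla\cdot(\varsigma\nabla v)=0$ in $\Omega\setminus C$ --- only the equation for $g$, which comes for free --- so it actually establishes the estimate for arbitrary $v\in H^1_\diamond(\Omega\setminus C_0)$, whereas the paper's continuous-dependence step genuinely relies on the harmonicity of $v$.
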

	\begin{proof}
	  Let $v^{C_\infty}$ denote the piecewise constant function in $H^1(C_\infty^\circ)$ that in each connected component $C_i$ of $C_\infty$ equals the mean value of the corresponding restriction $v|_{C_i}$. We define $w$ as the unique solution in $\mathscr{H}_\diamond$ to the auxiliary Dirichlet problem
		\begin{align*}
		-\nabla \cdot(\varsigma \nabla w) &= 0 \text{ in } \Omega \setminus C, \\
		w &= v \text{ on } \partial(\Omega\setminus C_0), \\
		w &= v^{C_\infty} \text{ in } C_\infty^\circ.
		\end{align*}
		It easily follows that $v-w$ satisfies a similar boundary value problem in $\Omega\setminus C$ with the only nontrivial boundary condition being a Dirichlet condition on $\partial C_\infty$. Using the continuous dependence on the Dirichlet condition for an elliptic equation, the trace theorem in $C_\infty$, and the Poincar\'e inequality in each connected component of $C_\infty$, we obtain the bound
		\begin{align}
		\norm{v - w}^2_{H^1(\Omega\setminus C_0)} &= \norm{v - w}^2_{H^1(\Omega\setminus C)} + \norm{v-v^{C_\infty}}^2_{H^1(C_\infty^\circ)} \notag \\
		&\leq K\left(\norm{(v-v^{C_\infty})|_{\partial C_\infty}}^2_{H^{1/2}(\partial C_\infty)} + \norm{v-v^{C_\infty}}^2_{H^1(C_\infty^\circ)} \right) \notag\\
		&\leq K\norm{v-v^{C_\infty}}^2_{H^1(C_\infty^\circ)} \notag\\
		&\leq K\norm{\nabla v}^2_{L^2(C_\infty;\rum{C}^d)}. \label{eq:appendixest}
		\end{align}
		The projection theorem and \eqref{eq:appendixest} finally yield
		\begin{equation*}
		\norm{P_\perp v}_{\varsigma} = \inf_{\hat{v}\in \mathscr{H}_\diamond}\norm{v - \hat{v}}_\varsigma \leq \norm{v - w}_\varsigma \leq K\norm{\nabla v}_{L^2(C_\infty;\rum{C}^d)}, 
		\end{equation*}
		hence concluding the proof.
	\end{proof}
	Applying Lemma~\ref{lemma:perp} to $u_\epsilon|_{\Omega\setminus C_0}$ and subsequently using \eqref{eq:ueps_infty} gives the bound
	\begin{equation*}
	  \norm{P_\perp u_\epsilon}_{\varsigma} \leq \epsilon^{1/2}K \norm{f}_{L^2(\Gamma)}
	\end{equation*}
        that we were looking for.
        \subsection{Completing the proof}
	Combining the obtained bounds for the two terms on the right-hand side of \eqref{eq:estu} leads to \eqref{eq:convrate} and proves the first part of Theorem~\ref{thm:uconverge}. To prove the second part, we write
	\begin{align*}
	\norm{\Lambda(\sigma_\epsilon) - \Lambda(\sigma)}_{\mathscr{L}(L^2_\diamond(\Gamma))} &= \sup_{\norm{f}_{L^2(\Gamma)}=1} \norm{(u_f^{\sigma_{\epsilon}} - u_f^\sigma)|_\Gamma}_{L^2(\Gamma)} \\
	&\leq \sup_{\norm{f}_{L^2(\Gamma)}=1} K\norm{u_f^{\sigma_{\epsilon}} - u_f^\sigma}_{H^1(\Omega\setminus C_0)} \leq K \epsilon^{1/2},
	\end{align*}
    where we used the trace theorem in $\Omega \setminus C_0$.~\hfill $\square$

	\section{Proofs of Theorem~\ref{thm:monofast} and Theorem~\ref{thm:monoslow}} \label{sec:monomethoddefinite}
	
	We denote by $\Lambda_\alpha(C) := \Lambda(\gamma_{\alpha,C})$, $\alpha > 0$, the ND map for the conductivity
	\begin{equation} \label{eq:truncgamma}
	\gamma_{\alpha,C} := \begin{cases}
	\gamma_0 & \text{in } \Omega\setminus C, \\
	\alpha\gamma_0 & \text{in } C.
	\end{cases}
	\end{equation}
	By Theorem~\ref{thm:uconverge}, we have $\Lambda_\alpha(C) \to \Lambda_0(C)$ in $\mathscr{L}(L^2_\diamond(\Gamma))$ as $\alpha\to 0$ and $\Lambda_\alpha(C)\to \Lambda_\infty(C)$ in $\mathscr{L}(L^2_\diamond(\Gamma))$ as $\alpha\to \infty$ for any $C\in\mathcal{A}$. This will allow us to apply to $\Lambda_\infty(C)$ and $\Lambda_0(C)$ the well known monotonicity properties of non-extreme ND maps via a limiting procedure.
	
    To this end, we recall some monotonicity principles for $\Lambda|_{L^\infty_+(\Omega)}$ as well as a fundamental result on so-called localizing solutions to \eqref{eq:continuum}.
	\begin{lemma}[Monotonicity principles] \label{lemma:mono}
		For $f\in L^2_\diamond(\Gamma)$ and $\varsigma_1, \varsigma_2 \in L^\infty_+(\Omega)$, it holds
		\begin{equation*}
		\int_{\Omega} \frac{\varsigma_2}{\varsigma_1}(\varsigma_1-\varsigma_2) \abs{\nabla u_f^{\varsigma_2}}^2\, \di x \leq \inner{(\Lambda(\varsigma_2) - \Lambda(\varsigma_1))f,f} \leq \int_{\Omega} (\varsigma_1-\varsigma_2)\abs{\nabla u_f^{\varsigma_2}}^2 \, \di x.
		\end{equation*}
	\end{lemma}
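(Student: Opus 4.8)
The plan is to reduce both inequalities to a single exact identity for the quadratic form $f \mapsto \inner{(\Lambda(\varsigma_2)-\Lambda(\varsigma_1))f,f}$ and then to estimate one remainder term in two different ways. Write $u_j := u_f^{\varsigma_j}$ and set $a_j(v,w) := \int_\Omega \varsigma_j \nabla v \cdot \nabla \overline{w}\,\di x$, so that the weak formulation reads $a_j(u_j,v) = \inner{f,v|_\Gamma}$ for every $v \in H^1_\diamond(\Omega)$. Testing with $v = u_j$ gives $\inner{\Lambda(\varsigma_j)f,f} = a_j(u_j,u_j) = \int_\Omega \varsigma_j \abs{\nabla u_j}^2\,\di x$, and the crucial cross-relations $a_1(u_1,v) = a_2(u_2,v) = \inner{f,v|_\Gamma}$ will let me interchange the two solutions inside the bilinear forms.

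First I would expand $a_1(u_1-u_2,u_1-u_2) = \int_\Omega \varsigma_1 \abs{\nabla(u_1-u_2)}^2\,\di x$ using bilinearity and symmetry, replacing $a_1(u_1,u_1)=\inner{f,u_1|_\Gamma}$ and $a_1(u_1,u_2)=\inner{f,u_2|_\Gamma}=\inner{\Lambda(\varsigma_2)f,f}$ via the weak forms. After collecting the terms and using $\inner{\Lambda(\varsigma_2)f,f}=\int_\Omega \varsigma_2\abs{\nabla u_2}^2\,\di x$, this yields the exact identity
\begin{equation*}
\inner{(\Lambda(\varsigma_2)-\Lambda(\varsigma_1))f,f} = \int_\Omega (\varsigma_1-\varsigma_2)\abs{\nabla u_2}^2\,\di x - \int_\Omega \varsigma_1 \abs{\nabla(u_1-u_2)}^2\,\di x.
\end{equation*}
The upper bound is then immediate: the second integral is nonnegative because $\varsigma_1 \in L^\infty_+(\Omega)$, so discarding it gives exactly the right-hand inequality.

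The only nontrivial part is the lower bound, which requires controlling the remainder $\int_\Omega \varsigma_1\abs{\nabla(u_1-u_2)}^2\,\di x$ from above. Here the key step is to subtract the two weak formulations to obtain $a_1(u_1-u_2,v) = \int_\Omega (\varsigma_2-\varsigma_1)\nabla u_2 \cdot \nabla\overline{v}\,\di x$ for all $v \in H^1_\diamond(\Omega)$, and then to test with $v = u_1-u_2$. Writing the integrand as $\bigl(\varsigma_1^{-1/2}(\varsigma_2-\varsigma_1)\nabla u_2\bigr)\cdot\bigl(\varsigma_1^{1/2}\nabla\overline{(u_1-u_2)}\bigr)$ and applying the Cauchy--Schwarz inequality, one cancels a factor of $\bigl(\int_\Omega \varsigma_1\abs{\nabla(u_1-u_2)}^2\,\di x\bigr)^{1/2}$ to get $\int_\Omega \varsigma_1\abs{\nabla(u_1-u_2)}^2\,\di x \leq \int_\Omega \varsigma_1^{-1}(\varsigma_1-\varsigma_2)^2\abs{\nabla u_2}^2\,\di x$. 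Substituting this bound into the identity and using the pointwise algebraic simplification $(\varsigma_1-\varsigma_2)-\varsigma_1^{-1}(\varsigma_1-\varsigma_2)^2 = \varsigma_1^{-1}\varsigma_2(\varsigma_1-\varsigma_2)$ yields precisely the left-hand inequality. I expect this Cauchy--Schwarz estimate — the weighted splitting that manufactures the factor $\varsigma_2/\varsigma_1$ — to be the decisive step, while everything else is routine bookkeeping with the bilinear forms. For complex $f$ one simply works with the real part of the quadratic form throughout, which leaves each of the steps unchanged.
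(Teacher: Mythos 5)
Your argument is correct, and the constants come out exactly right. The paper itself does not prove Lemma~\ref{lemma:mono} but cites \cite{Harrach13,Harrach10}; the closest in-paper analogue is the proof of Lemma~\ref{lemma:monoappendix}(i) in Appendix~\ref{sec:appendixA}. Your treatment of the upper bound coincides with that argument: both rest on the exact identity
\begin{equation*}
\inner{(\Lambda(\varsigma_2)-\Lambda(\varsigma_1))f,f} = \int_\Omega (\varsigma_1-\varsigma_2)\abs{\nabla u_2}^2\,\di x - \int_\Omega \varsigma_1 \abs{\nabla(u_1-u_2)}^2\,\di x,
\end{equation*}
from which the right-hand inequality follows by discarding the nonnegative remainder. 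For the lower bound you diverge from the standard route: the cited proofs (and the appendix) rearrange the same quadratic form into a second exact identity,
\begin{equation*}
\inner{(\Lambda(\varsigma_2)-\Lambda(\varsigma_1))f,f} = \int_{\Omega} \varsigma_1 \Bigl| \nabla u_1 - \frac{\varsigma_2}{\varsigma_1}\nabla u_2\Bigr|^2 \,\di x + \int_{\Omega} \frac{\varsigma_2}{\varsigma_1}(\varsigma_1-\varsigma_2)\abs{\nabla u_2}^2\,\di x,
\end{equation*}
so that the left-hand inequality is immediate from the manifest nonnegativity of the completed square, whereas you instead bound the remainder $\int_\Omega \varsigma_1\abs{\nabla(u_1-u_2)}^2\,\di x$ from above by a weighted Cauchy--Schwarz estimate applied to the subtracted weak formulations and then use the pointwise factorization $(\varsigma_1-\varsigma_2)-\varsigma_1^{-1}(\varsigma_1-\varsigma_2)^2 = \varsigma_1^{-1}\varsigma_2(\varsigma_1-\varsigma_2)$. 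The two routes are mathematically equivalent (your Cauchy--Schwarz inequality is precisely the statement that the completed square is nonnegative), but the identity-based version is slightly cleaner, avoids the division-and-cancellation step, and hands you an explicit formula for the gap in the inequality, which is occasionally useful elsewhere; your version has the merit of being easier to discover without knowing the answer in advance. Your closing remark about complex $f$ is not really needed: since $\Lambda(\varsigma_1)$ and $\Lambda(\varsigma_2)$ are self-adjoint the quadratic forms are automatically real, and the cross terms pair off into real combinations exactly as in the appendix computation.
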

	\begin{proof}
		This type of results go back to \cite{Kang1997b,Ikehata1998a}. See \cite[Lemma~3.1]{Harrach13} or \cite[Lemma~2.1]{Harrach10} for a proof of this particular version with $\Gamma = \partial \Omega$, readily modifiable to the case of local ND maps using the weak formulation \eqref{eq:weakform}. See also \cite[Section~4.3]{Harrach13} for remarks on such an extension.
	\end{proof}
	\begin{lemma}[Localized potentials] \label{lemma:locpot}
		Let $U\subset \overline{\Omega}$ be a relatively open connected set that intersects $\Gamma$. Let $B\subset U$ be a nonempty open set and assume $\varsigma\in L^\infty_+(\Omega)$ satisfies the UCP in $U$. Then there are sequences $(f_i)\subset L^2_\diamond(\Gamma)$ and $(u_i)\subset H^1_\diamond(\Omega)$ with $u_i = u_{f_i}^\varsigma$ such that
		\begin{equation}
			\lim_{i\to\infty} \int_B \abs{\nabla u_i}^2\,\di x = \infty \qquad \text{and} \qquad \lim_{i\to\infty}\int_{\Omega\setminus U} \abs{\nabla u_i}^2\,\di x = 0. \label{eq:locpot}
		\end{equation}
	\end{lemma}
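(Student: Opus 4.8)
The plan is to deduce the existence of the localizing sequence from a functional-analytic non-inclusion of ranges, following the localized-potentials technique of Gebauer. For a measurable $\omega\subseteq\Omega$ I would introduce the bounded \emph{virtual measurement operator} $L_\omega\colon L^2_\diamond(\Gamma)\to L^2(\omega;\rum{C}^d)$, $L_\omega f:=\nabla u_f^\varsigma|_\omega$. The two relevant choices are $\omega=B$ and $\omega=\Omega\setminus U$, and \eqref{eq:locpot} is exactly the statement that there is a sequence $(f_i)$ with $\norm{L_B f_i}\to\infty$ and $\norm{L_{\Omega\setminus U}f_i}\to 0$. By a standard functional-analytic lemma (Douglas' range-inclusion theorem, applied to $L_B^*$ and $L_{\Omega\setminus U}^*$), the \emph{non}existence of a constant $C$ with $\norm{L_B f}\le C\norm{L_{\Omega\setminus U}f}$ for all $f$ is equivalent to $\operatorname{Ran}(L_B^*)\not\subseteq\operatorname{Ran}(L_{\Omega\setminus U}^*)$; and once no such $C$ exists, a suitable rescaling of a sequence realizing $\norm{L_B f_i}/\norm{L_{\Omega\setminus U}f_i}\to\infty$ yields both the blow-up and the decay in \eqref{eq:locpot}. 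Since enlarging $B$ only increases $\int_B\abs{\nabla u_i}^2\,\di x$, I may first shrink $B$ to a small ball with $\overline B\subset(U\cap\Omega)$, so that $\overline B$ is disjoint from both $\Gamma$ and $\Omega\setminus U$, while $U\setminus\overline B$ is still connected and meets $\Gamma$.

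Next I would identify the adjoints. For $G\in L^2(\omega;\rum{C}^d)$ let $v_G\in H^1_\diamond(\Omega)$ be the unique solution of $\int_\Omega\varsigma\nabla v_G\cdot\nabla\overline\psi\,\di x=\int_\omega G\cdot\nabla\overline\psi\,\di x$ for all $\psi\in H^1_\diamond(\Omega)$, which exists by Lax--Milgram. Testing this identity with $\psi=u_f^\varsigma$ and, conversely, testing the weak form of $u_f^\varsigma$ with $v_G$ gives $\inner{L_\omega f,G}_{L^2(\omega)}=\inner{f,v_G|_\Gamma}$, so that $L_\omega^*G=v_G|_\Gamma$. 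Thus $\operatorname{Ran}(L_\omega^*)$ is precisely the set of boundary traces on $\Gamma$ produced by interior sources supported in $\omega$.

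The heart of the proof, and the step I expect to be the main obstacle, is the unique-continuation argument showing $\operatorname{Ran}(L_B^*)\not\subseteq\operatorname{Ran}(L_{\Omega\setminus U}^*)$. Arguing by contradiction, suppose the inclusion holds, fix $G\in \Ccinf(B;\rum{R}^d)$ with support in a ball $S\Subset B$, and obtain $H\in L^2(\Omega\setminus U;\rum{R}^d)$ with $v_G|_\Gamma=v_H|_\Gamma$. Set $\phi:=v_G-v_H\in H^1_\diamond(\Omega)$. Then $\phi|_\Gamma=0$, and since the two sources are supported in $S$ and in $\Omega\setminus U$ respectively, $\phi$ is $\varsigma$-harmonic in $U\setminus S$ and carries vanishing Neumann data on the nonempty relatively open piece $\Gamma\cap U$; hence $\phi$ has vanishing Cauchy data there. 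The UCP of $\varsigma$ in $U$ then forces $\phi\equiv 0$ throughout the connected set $U\setminus S$. Consequently $V:=v_G-v_H$ is supported in $\overline S$ and solves $-\nabla\cdot(\varsigma\nabla V)=-\nabla\cdot G$ with vanishing Cauchy data on $\partial S$; testing against an arbitrary locally $\varsigma$-harmonic $w$ and integrating by parts yields the compatibility condition $\int_S G\cdot\nabla\overline w\,\di x=0$ for every such $w$. This is violated by the choice $G=\rho\,\nabla w_0$ with $w_0$ a nonconstant local $\varsigma$-harmonic function and $\rho\in\Ccinf(S)$ nonnegative and positive where $\nabla w_0\neq 0$, since then $\int_S G\cdot\nabla\overline{w_0}\,\di x=\int_S\rho\abs{\nabla w_0}^2\,\di x>0$, a contradiction.

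The genuine difficulty lies in the careful bookkeeping of this last step: reducing to $\overline B\subset U\cap\Omega$ so that the Neumann data of both $v_G$ and $v_H$ vanish on $\Gamma\cap U$, verifying that $U\setminus S$ is connected and meets $\Gamma$ so that the UCP applies and propagates $\phi\equiv 0$ up to $\partial S$, and extracting the over-determined (Cauchy-data-free) problem on $S$ whose solvability obstruction is exactly orthogonality to $\varsigma$-harmonic fields. Once $\operatorname{Ran}(L_B^*)\not\subseteq\operatorname{Ran}(L_{\Omega\setminus U}^*)$ is established, Douglas' lemma and the rescaling described in the first paragraph complete the proof, with $u_i=u_{f_i}^\varsigma\in H^1_\diamond(\Omega)$ as required.
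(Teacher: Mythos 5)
Your argument is correct in substance, but it is worth knowing that the paper does not prove this lemma at all: it simply observes that the statement follows from Theorem~2.7 of Gebauer's localized potentials paper, after shrinking $B$ to a ball $\tilde B$ with $\overline{\tilde B}\subset B$. What you have written is essentially a self-contained reconstruction of that cited theorem -- the virtual measurement operators $L_\omega$, the range-inclusion/norm-comparison lemma, the identification $L_\omega^* G = v_G|_\Gamma$, and the unique-continuation argument ruling out $\operatorname{Ran}(L_B^*)\subseteq\operatorname{Ran}(L_{\Omega\setminus U}^*)$ -- so it is the same route, just unpacked. That unpacking is valuable, but three details need repair. First, the claim that $V=v_G-v_H$ is ``supported in $\overline S$'' is false: the UCP only yields $\phi\equiv 0$ on $U\setminus\overline S$, and $V$ may well be nonzero on $\Omega\setminus U$ where the source $H$ lives; fortunately your subsequent integration by parts only uses the equation in $S$ and the vanishing Cauchy data on $\partial S$ seen from the $U$-side, so the conclusion $\int_S G\cdot\nabla\overline w\,\di x=0$ survives. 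Second, the UCP as defined in the paper is a hypothesis on solutions in all of $U^\circ$, whereas $\phi$ solves the equation only in $U^\circ\setminus\overline S$; you therefore need the (connected, $\Gamma$-touching) set $U\setminus\overline S$ to itself support unique continuation, which is automatic for the piecewise-analytic backgrounds the paper has in mind but is a genuine mismatch with the literal hypothesis and should be stated. Third, your final choice $G=\rho\,\nabla w_0$ is generally only in $L^2(S;\rum{R}^d)$ rather than $\Ccinf$, which is harmless since $L_\omega^*$ acts on $L^2$ sources, but it contradicts your earlier restriction to smooth $G$. With these points fixed, the proof is complete and, unlike the paper's one-line citation, fully self-contained.
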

	\begin{proof}
		This result is a corollary of \cite[Theorem~2.7]{Gebauer2008b}, which can be seen by using \cite[Theorem~2.7]{Gebauer2008b} to localize potentials inside an open ball $\tilde{B}$ satisfying $\overline{\tilde{B}}\subset B$.
	\end{proof}
	We note that one also obtains localized potentials in the case of extreme inclusions, with the same sequence of current densities $(f_i)$ as for the background conductivity, as long as the extreme inclusions are located in the part of the domain where the gradient tends to zero. In the following result, it may be worth recalling the definition of the extension operator into the insulating parts from Corollary~\ref{coro:extended}.
	\begin{lemma} \label{lemma:locpot2}
		Let the sets $U$ and $B$ be given as in Lemma~\ref{lemma:locpot}. For $\varsigma\in L^\infty_+(\Omega)$ and $(f_i)\subset L^2_\diamond(\Gamma)$, suppose that $u_i = u_{f_i}^\varsigma$ satisfies
		\begin{equation}
			\lim_{i\to\infty} \int_B \abs{\nabla u_i}^2\,\di x = \infty \qquad \text{and} \qquad \lim_{i\to\infty}\int_{\Omega\setminus U} \abs{\nabla u_i}^2\,\di x = 0. \label{eq:locpotagain}
		\end{equation}
		If $C = C_0\cup C_\infty$ satisfies Assumption~\ref{assump} with $C \subset \Omega\setminus \overline{U}$ and moreover $\hat{u}_i = E(\varsigma,C_0) u_{f_i}^\sigma$ with $\sigma = \sigma(\varsigma,C_0,C_\infty)$, then it also holds
		\begin{equation}
		\lim_{i\to\infty} \int_B \abs{\nabla \hat{u}_i}^2\,\di x = \infty \qquad \text{and} \qquad \lim_{i\to\infty}\int_{\Omega\setminus U} \abs{\nabla \hat{u}_i}^2\,\di x = 0. \label{eq:locpot2}
		\end{equation}
	\end{lemma}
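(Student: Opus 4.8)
The plan is to reduce everything to the single energy estimate
\begin{equation*}
\norm{\hat u_i - u_i}_{H^1(\Omega)} \leq K \norm{\nabla u_i}_{L^2(\Omega\setminus U;\rum{C}^d)},
\end{equation*}
with $K$ independent of $i$. Once this is available, the second limit in \eqref{eq:locpotagain} forces the right-hand side to $0$, and \eqref{eq:locpot2} follows from two elementary triangle-inequality arguments. Since $B\subset U$ and $C\subset\Omega\setminus\overline U\subset\Omega\setminus U$ force $B$ to be disjoint from $C_0$, on $B$ we have $\hat u_i = u_{f_i}^\sigma$, and the reverse triangle inequality together with $\int_B\abs{\nabla u_i}^2\,\di x\to\infty$ gives $\int_B\abs{\nabla\hat u_i}^2\,\di x\to\infty$; on $\Omega\setminus U$ the forward triangle inequality combined with both limits in \eqref{eq:locpotagain} and the estimate above gives $\int_{\Omega\setminus U}\abs{\nabla\hat u_i}^2\,\di x\to 0$.

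To prove the energy estimate I set $\psi_i := \hat u_i - u_i$ and exploit the structure of the extension operator $E = E(\varsigma,C_0)$ from Corollary~\ref{coro:extended}. The key observation is that $E(u_i|_{\Omega\setminus C_0}) = u_i$: indeed $u_i = u_{f_i}^\varsigma$ is $\varsigma$-harmonic in $C_0^\circ$, so by uniqueness of the Dirichlet problem defining $E$ it coincides with its own harmonic extension. Since $\hat u_i = E u_{f_i}^\sigma$ and $E$ is linear and bounded, it follows that
\begin{equation*}
\psi_i = E\bigl(u_{f_i}^\sigma - u_i|_{\Omega\setminus C_0}\bigr), \qquad \norm{\psi_i}_{H^1(\Omega)} \leq K\norm{u_{f_i}^\sigma - u_i}_{H^1(\Omega\setminus C_0)},
\end{equation*}
which reduces the task to estimating $u_{f_i}^\sigma - u_i$ on $\Omega\setminus C_0$.

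For the latter I would invoke Proposition~\ref{prop:altcharacterization} with $\varsigma$ as background, writing $u_{f_i}^\sigma = P(u_i|_{\Omega\setminus C_0} - w)$ with $P = P(\varsigma,C_0,C_\infty)$ and $w$ the auxiliary solution appearing there. This yields the clean decomposition $u_{f_i}^\sigma - u_i = -P_\perp u_i - Pw$, whence $\norm{u_{f_i}^\sigma - u_i}_\varsigma \leq \norm{P_\perp u_i}_\varsigma + \norm{w}_\varsigma$. The first term is controlled by Lemma~\ref{lemma:perp} applied to $u_i|_{\Omega\setminus C_0}$ (which is $\varsigma$-harmonic in $\Omega\setminus C$), giving $\norm{P_\perp u_i}_\varsigma \leq K\norm{\nabla u_i}_{L^2(C_\infty;\rum{C}^d)}$. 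For the second term I test the weak form of $w$ with $w$ itself and bound the Neumann datum $\nu\cdot(\varsigma\nabla u_i)|_{\partial C_0}$ in $H^{-1/2}(\partial C_0)$ — exactly as in \eqref{eq:est2}, using $\nabla\cdot(\varsigma\nabla u_i)=0$ in $C_0^\circ$ and the $H_\textup{div}(C_0^\circ)$ normal-trace theorem — to obtain $\norm{w}_\varsigma \leq K\norm{\nabla u_i}_{L^2(C_0;\rum{C}^d)}$. Since $C_0, C_\infty \subset C \subset \Omega\setminus U$, both bounds are dominated by $\norm{\nabla u_i}_{L^2(\Omega\setminus U;\rum{C}^d)}$, completing the estimate.

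The genuinely new point is the reduction itself: recognizing that far-away extreme inclusions perturb the localized potential only through quantities supported in $C=C_0\cup C_\infty\subset\Omega\setminus\overline U$, where $\nabla u_i$ is small by hypothesis. The routine-but-careful steps are the identity $E(u_i|_{\Omega\setminus C_0})=u_i$ and the $H^{-1/2}(\partial C_0)$ bound on the Neumann datum; I expect the latter — controlling the exterior normal trace purely by the interior energy $\norm{\nabla u_i}_{L^2(C_0;\rum{C}^d)}$ via the fact that $\nabla\cdot(\varsigma\nabla u_i)=0$ holds across $\partial C_0$ — to be the most delicate bookkeeping, though it is fully parallel to the argument already carried out in the proof of Theorem~\ref{thm:uconverge}.
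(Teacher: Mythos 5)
Your proof is correct, and its core is the same as the paper's: both rest on the decomposition of $u_{f_i}^\sigma$ via Proposition~\ref{prop:altcharacterization} into $u_i$ minus the auxiliary correction $w$ minus the $P_\perp$-component, with Lemma~\ref{lemma:perp} controlling the $C_\infty$-contribution and an $H_\textup{div}$ normal-trace bound controlling $w$. Where you genuinely diverge is in the packaging, and each divergence buys something. First, you compress everything into the single quantitative estimate $\norm{\hat u_i-u_i}_{H^1(\Omega)}\leq K\norm{\nabla u_i}_{L^2(\Omega\setminus U;\rum{C}^d)}$ and finish with two triangle inequalities, whereas the paper tracks the limits region by region; your version is cleaner and even yields a rate, and it is legitimate because $u_{f_i}^\sigma-u_i|_{\Omega\setminus C_0}\in H^1_\diamond(\Omega\setminus C_0)$, so the Poincar\'e inequality upgrades the $\norm{\cdot}_\varsigma$-bound to a full $H^1$-bound. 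Second, you bound the Neumann datum on $\partial C_0$ by the \emph{interior} $H_\textup{div}(C_0^\circ)$ trace, i.e.\ by $\norm{\nabla u_i}_{L^2(C_0;\rum{C}^d)}$, using that the flux of $\varsigma\nabla u_i$ is continuous across $\partial C_0$ (valid since $u_i$ solves the conductivity equation in all of $\Omega$); the paper instead takes the exterior trace through an annulus $\Omega\setminus(V\cup C_0)$ and gets $\norm{\nabla u_i}_{L^2(\Omega\setminus(V\cup C_0);\rum{C}^d)}$ -- both are dominated by $\norm{\nabla u_i}_{L^2(\Omega\setminus U;\rum{C}^d)}$, so the difference is cosmetic, but your route does require explicitly invoking the no-jump relation, which you correctly flag. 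Third, and most usefully, your identity $E(u_i|_{\Omega\setminus C_0})=u_i$ together with linearity and $H^1$-boundedness of $E$ disposes of the behavior of $\hat u_i$ inside $C_0$ in one line, replacing the paper's quotient-space argument ($H^{1/2}(\partial C_0)/\rum{C}$ and continuous dependence of the Dirichlet problem modulo constants); this works precisely because your estimate controls the difference in the full $H^1$-norm rather than only modulo constants. All steps check out; no gap.
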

	\begin{proof}
		Set $\hat{v}_i = u_{f_i}^\sigma$ and use Proposition~\ref{prop:altcharacterization} to write
		\begin{equation*}
			\hat{v}_i = Pv_i,
		\end{equation*}		
		where $v_i = u_i|_{\Omega\setminus C_0} - w_i$ and $w_i \in H^1_\diamond(\Omega \setminus C_0)$ solves the auxiliary problem in Proposition~\ref{prop:altcharacterization} with $u^\varsigma_f$ replaced by $u_i$. In the following we denote $(\varsigma\nabla u_i)_\textup{ext} := \varsigma\nabla u_i|_{\Omega\setminus C_0}$.

              By the continuous dependence of $w_i$ on the Neumann condition and a bound similar to \eqref{eq:est2} involving $H_{\rm div}(\Omega \setminus (V\cup C_0))$ instead of $H_{\rm div}(C_0)$, we have
		\begin{align} \label{eq:wiest}
			\norm{\nabla w_i}_{L^2(\Omega\setminus C_0;\rum{C}^d)} &\leq \norm{w_i}_{H^1(\Omega\setminus C_0)} \leq K\norm{\nu\cdot (\varsigma\nabla u_i)_\textup{ext}|_{\partial C_0}}_{H^{-1/2}(\partial C_0)} \notag\\
			&\leq K\norm{\nabla u_i}_{L^2(\Omega\setminus (V\cup C_0);\rum{C}^d)} \leq K\norm{\nabla u_i}_{L^2(\Omega\setminus (U\cup C_0);\rum{C}^d)},
		\end{align}
		where $V$ is chosen such that $C_0 \subset \Omega\setminus\overline{V}\subset \Omega\setminus \overline{U}$ and $\partial(\Omega\setminus \overline{V})$ is Lipschitz continuous. From \eqref{eq:locpotagain} and \eqref{eq:wiest}, and because $K>0$ is independent of $f_i$, we have that $\nabla w_i$ converges to a zero-vector in $L^2(\Omega\setminus C_0;\rum{C}^d)$. Therefore the limiting behavior of $\nabla v_i$ coincides with the limiting behavior of $\nabla u_i|_{\Omega\setminus C_0}$. By applying \eqref{eq:locpotagain} again we deduce that
		\begin{equation} \label{eq:vhatest1}
			\lim_{i\to\infty} \int_B \abs{\nabla v_i}^2\,\di x = \infty \qquad \text{and} \qquad \lim_{i\to\infty}\int_{\Omega\setminus (U\cup C_0)} \abs{\nabla v_i}^2\,\di x = 0.
		\end{equation}

                We write $\hat{v}_i = v_i - P_\perp v_i$ and focus on the latter term. Lemma~\ref{lemma:perp} gives the bound
		\begin{equation} \label{eq:vhatest2}
			\norm{\nabla P_\perp v_i}_{L^2(\Omega\setminus C_0;\rum{C}^d)} \leq K\norm{P_\perp v_i}_\varsigma \leq K\norm{\nabla v_i}_{L^2(C_\infty;\rum{C}^d)}.
		\end{equation}
		Since $C_\infty \subset \Omega\setminus (\overline{U}\cup C_0)$, the combination of \eqref{eq:vhatest1} and \eqref{eq:vhatest2} gives
		\begin{equation*} 
		\lim_{i\to\infty} \int_B \abs{\nabla \hat{v}_i}^2\,\di x = \infty \qquad \text{and} \qquad \lim_{i\to\infty}\int_{\Omega\setminus (U\cup C_0)} \abs{\nabla \hat{v}_i}^2\,\di x = 0.
		\end{equation*}
		As $\hat{u}_i|_{\Omega\setminus C_0} = \hat{v}_i$, what remains is to investigate $\hat{u}_i|_{C_0}$.
		
		Since $\hat{u}_i|_{\Omega\setminus(\overline{U}\cup C_0)}$ converges to zero in the quotient space $H^1(\Omega\setminus(\overline{U}\cup C_0))/\rum{C}$, it follows from an obvious extension of the trace theorem that $\hat{u}_i|_{\partial C_0}$ tends to zero in $H^{1/2}(\partial C_0)/\rum{C}$. Note that $\nabla \hat{u}_i = \nabla \tilde{u}_i$ in $C_0^\circ$, where $\tilde{u}_i$ is the unique solution in $H^1(C_0^\circ)/\rum{C}$ to $\nabla\cdot(\varsigma\nabla \tilde{u}_i) = 0$ in $C_0^\circ$ with its Dirichlet trace on $\partial C_0$ being the element of $H^{1/2}(\partial C_0)/\rum{C}$ defined by $\hat{u}_i|_{\partial C_0} \in H^{1/2}(\partial C_0)$. Due to the continuous dependence of $\tilde{u}_i$ on its Dirichlet trace in the respective quotient topologies, we have 
		\begin{equation*}
			\lim_{i\to\infty}\int_{C_0} \abs{\nabla \hat{u}_i}^2\,\di x = \lim_{i\to\infty}\int_{C_0} \abs{\nabla \tilde{u}_i}^2\,\di x = 0,
		\end{equation*} 
		thereby concluding the proof.
	\end{proof}
	
	\begin{remark}
	  In the proofs of Theorem~\ref{thm:monofast}, Theorem~\ref{thm:monoslow}, and Theorem~\ref{thm:general}, we first employ the monotonicity principles from Lemma~\ref{lemma:mono} for truncated conductivities, and then show ``$\Rightarrow$'' of the theorem statements by passing to the limit with the help of Theorem~\ref{thm:uconverge}. It is tempting to also try something similar in the opposite direction ``$\Leftarrow$'' by proving the contrapositive of the theorem statements. Indeed, the $\epsilon$-truncated conductivities define an analytic (see e.g.~\cite[Theorem~A.2]{Garde_2019c}) family of compact self-adjoint operators, converging in the operator norm, such that for each small enough $\epsilon>0$ one of the listed inequalities is contradicted. Hence, for the associated operators, the bottom of the spectrum is strictly negative for $\epsilon >0$. Going to the limit $\epsilon\to 0$ implies that the bottom of the spectrum, say $\lambda$, is less than or equal to zero for the limiting compact self-adjoint operator as well. If one could guarantee that $\lambda$ is an eigenvalue of the limiting operator, then one could also easily establish that actually $\lambda < 0$, which would prove the direction ``$\Leftarrow$'' of the theorems. For Theorem~\ref{thm:monofast} and Theorem~\ref{thm:monoslow} this could be done by comparing with the analogous operator with a smaller $\beta$, and for Theorem~\ref{thm:general} by slightly magnifying the test inclusion $C$.
          However, it turns out that it is indeed possible to construct families of compact self-adjoint operators, depending analytically on a positive parameter and converging in the operator norm as this parameter tends to zero, such that the bottom of the spectrum converges from negative to zero, but without zero being an eigenvalue of the limiting operator. To circumvent this problem, we will prove the direction ``$\Leftarrow$'' of the aforementioned theorems by an alternative technique relying on the tighter monotonicity estimates established in Appendix~\ref{sec:appendixA}.
	\end{remark}
	
	We now state a straightforward corollary that will be useful specifically in proving the direction ``$\Leftarrow$'' in Theorem~\ref{thm:monofast} and Theorem~\ref{thm:monoslow}. 
	\begin{corollary} \label{coro:mononew}
		Let $f\in L^2_\diamond(\Gamma)$, $\varsigma\in L_+^\infty(\Omega)$, and $\hat{\gamma}_0 = \sigma(\gamma_0,D,\emptyset)$. By denoting $\hat{u} = E(\gamma_0,D)u_f^{\hat{\gamma}_0}$ and $u = u_f^{\gamma_0}$, it holds
		\begin{align*}
		\inner{(\Lambda_0(D) - \Lambda(\varsigma))f,f} &\leq \int_{\Omega\setminus D} (\varsigma-\gamma_0)\abs{\nabla \hat{u}}^2 \, \di x + \int_{D} \varsigma \abs{\nabla \hat{u}}^2\, \di x, \\
		\inner{(\Lambda(\varsigma) - \Lambda_\infty(D))f,f} &\leq \int_{\Omega} \frac{\gamma_0}{\varsigma}(\gamma_0-\varsigma)\abs{\nabla u}^2\,\di x + K\int_{D} \abs{\nabla u}^2\,\di x,
		\end{align*}
		where $K>0$ is independent of $f$.
	\end{corollary}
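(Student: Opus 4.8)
The plan is to derive both inequalities as limiting versions of the monotonicity principles of Lemma~\ref{lemma:mono}, applied to the $\alpha$-truncated conductivity $\gamma_{\alpha,D}$ of \eqref{eq:truncgamma} and combined with the convergence statements in Theorem~\ref{thm:uconverge} and Corollary~\ref{coro:extended}. The insulating estimate (first inequality) is reached by a clean passage to the limit $\alpha\to 0$, whereas the conducting estimate (second inequality) must be obtained by first splitting off the comparison with $\gamma_0$ and then invoking the sharper estimates of Appendix~\ref{sec:appendixA}, since the crude bound of Lemma~\ref{lemma:mono} degenerates in the perfectly conducting limit.

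For the first inequality I apply the upper bound of Lemma~\ref{lemma:mono} with $\varsigma_2=\gamma_{\alpha,D}$ and $\varsigma_1=\varsigma$, which gives
\[
\inner{(\Lambda_\alpha(D)-\Lambda(\varsigma))f,f}\leq \int_{\Omega\setminus D}(\varsigma-\gamma_0)\abs{\nabla u_f^{\gamma_{\alpha,D}}}^2\,\di x+\int_D(\varsigma-\alpha\gamma_0)\abs{\nabla u_f^{\gamma_{\alpha,D}}}^2\,\di x .
\]
Letting $\alpha\to 0$, Theorem~\ref{thm:uconverge} gives $\Lambda_\alpha(D)\to\Lambda_0(D)$ in operator norm, so the left-hand side tends to $\inner{(\Lambda_0(D)-\Lambda(\varsigma))f,f}$. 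At the same time Corollary~\ref{coro:extended}, applied with $C_0=D$, $C_\infty=\emptyset$, and $\hat{\gamma}_0=\sigma(\gamma_0,D,\emptyset)$, yields $u_f^{\gamma_{\alpha,D}}\to\hat{u}$ in $H^1(\Omega)$, hence $\nabla u_f^{\gamma_{\alpha,D}}\to\nabla\hat{u}$ in $L^2(\Omega;\rum{C}^d)$. Since $\alpha\gamma_0\to 0$ uniformly and $\varsigma$ is bounded, both integrals on the right converge to their counterparts with $\hat{u}$, and the $\alpha\gamma_0$-term vanishes in the limit; this produces exactly $\int_{\Omega\setminus D}(\varsigma-\gamma_0)\abs{\nabla\hat{u}}^2\,\di x+\int_D\varsigma\abs{\nabla\hat{u}}^2\,\di x$, establishing the first claim.

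For the second inequality I decompose
\[
\inner{(\Lambda(\varsigma)-\Lambda_\infty(D))f,f}=\inner{(\Lambda(\varsigma)-\Lambda(\gamma_0))f,f}+\inner{(\Lambda(\gamma_0)-\Lambda_\infty(D))f,f}.
\]
The first summand is controlled by the lower bound of Lemma~\ref{lemma:mono} with $\varsigma_2=\gamma_0$ and $\varsigma_1=\varsigma$, which after rearranging becomes $\inner{(\Lambda(\varsigma)-\Lambda(\gamma_0))f,f}\leq\int_\Omega\frac{\gamma_0}{\varsigma}(\gamma_0-\varsigma)\abs{\nabla u}^2\,\di x$ with $u=u_f^{\gamma_0}$. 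For the second summand I invoke the tighter monotonicity estimate from Appendix~\ref{sec:appendixA} (Lemma~\ref{lemma:monoappendix}), which bounds $\inner{(\Lambda(\gamma_0)-\Lambda_\infty(D))f,f}\leq K\int_D\abs{\nabla u}^2\,\di x$. Adding the two contributions gives the second claim.

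The main obstacle is precisely this last step, and it is the reason the two cases cannot be treated symmetrically. Attempting to obtain the conducting estimate as an $\alpha\to\infty$ limit of Lemma~\ref{lemma:mono} fails: inside $D$ the factor $\tfrac{\gamma_{\alpha,D}}{\varsigma}(\gamma_{\alpha,D}-\varsigma)$ grows like $\alpha^2$, while the decay $\norm{\nabla u_f^{\gamma_{\alpha,D}}}_{L^2(D)}^2=O(\alpha^{-1})$ furnished by \eqref{eq:ueps_infty} only compensates one power of $\alpha$, leaving an $O(\alpha)$ term that diverges. The sharper estimates of Appendix~\ref{sec:appendixA} are exactly what is needed to replace this divergent bound by the clean quantity $K\int_D\abs{\nabla u}^2\,\di x$, so securing that appendix bound is the crux of the argument.
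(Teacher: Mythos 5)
Your proof is correct and follows essentially the same route as the paper: the first inequality is obtained by applying the upper bound of Lemma~\ref{lemma:mono} to $\gamma_{\alpha,D}$ and passing to the limit $\alpha\to 0$ via Theorem~\ref{thm:uconverge} and Corollary~\ref{coro:extended}, and the second by the same splitting $\Lambda(\varsigma)-\Lambda(\gamma_0)$ plus $\Lambda(\gamma_0)-\Lambda_\infty(D)$, handled by Lemma~\ref{lemma:mono} and Lemma~\ref{lemma:monoappendix}(ii) respectively. Your closing observation about why a direct $\alpha\to\infty$ limit of Lemma~\ref{lemma:mono} degenerates is accurate and matches the paper's motivation for the Appendix~\ref{sec:appendixA} estimates.
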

	\begin{proof}
Applying Lemma~\ref{lemma:mono} to the truncated conductivity $\gamma_{\alpha,D}$ from \eqref{eq:truncgamma} gives
		\begin{equation} \label{eq:firstmonoineqtemp}
		\inner{(\Lambda(\gamma_{\alpha,D}) - \Lambda(\sigma))f,f} \leq \int_{\Omega} (\sigma-\gamma_{\alpha,D})\abs{\nabla u_f^{\gamma_{\alpha,D}}}^2 \, \di x.
		\end{equation}
		Since \eqref{eq:firstmonoineqtemp} holds for all $\alpha>0$,  taking the limit $\alpha\to 0$ proves the first inequality by virtue of Theorem~\ref{thm:uconverge} and Corollary~\ref{coro:extended}.
		
		To prove the second inequality, consider
		\begin{equation} \label{eq:secondmonoineqtmp}
			\inner{(\Lambda(\varsigma) - \Lambda_\infty(D))f,f} = \inner{(\Lambda(\varsigma) - \Lambda(\gamma_0))f,f} + \inner{(\Lambda(\gamma_0) - \Lambda_\infty(D))f,f}.
		\end{equation}
		The proof is completed by a direct application of Lemma~\ref{lemma:mono} to the first term on the right-hand side of \eqref{eq:secondmonoineqtmp} and of Lemma~\ref{lemma:monoappendix}(ii) to the second term on the right-hand side of \eqref{eq:secondmonoineqtmp}.
	\end{proof}
	
	\subsection{Proof of Theorem~\ref{thm:monofast}}
	Let $\alpha_0 > 0$ be fixed and assume $0<\beta \leq \frac{\alpha_0}{1+\alpha_0} \inf(\gamma_0)$. At the end of the proof, we will let $\alpha_0\to\infty$, and hence allow $\beta$ to take values up to and including $\inf(\gamma_0)$.
	
	We start by considering the first ``if and only if'' statement. 
	
	\subsubsection*{Proof of ``$\Rightarrow$''} Assume $B\subset D$ and consider values of $\alpha$ in the range $0<\alpha \leq 1 - \frac{\alpha_0}{1+\alpha_0}$. Applying Lemma~\ref{lemma:mono} and \eqref{eq:DNfrechet} leads to
	\begin{align*}
	\inner{(\Lambda_\alpha(D) - \Lambda(\gamma_0) + \beta \DLambda(\gamma_0;\chi_B))f,f} &\geq \int_D \left[ (1-\alpha)\gamma_0 - \beta\chi_B \right] \abs{\nabla u_f^{\gamma_0}}^2\,\di x  \\
	&\geq \frac{\alpha_0}{1+\alpha_0} \int_B \left[\gamma_0 - \inf(\gamma_0)\right]\abs{\nabla u_f^{\gamma_0}}^2\,\di x \geq 0
	\end{align*}
	for all $f\in L^2_\diamond(\Gamma)$. Hence $B\subset D \Rightarrow \Lambda_\alpha(D)\geq \Lambda(\gamma_0)-\beta \DLambda(\gamma_0;\chi_B)$ for any $0<\alpha \leq 1 - \frac{\alpha_0}{1+\alpha_0}$, i.e.\ letting $\alpha\to 0$ gives $B\subset D \Rightarrow \Lambda_0(D)\geq \Lambda(\gamma_0)-\beta \DLambda(\gamma_0;\chi_B)$.
	
	\subsubsection*{Proof of ``$\Leftarrow$''} Assume $B\not\subset D$ so that $B\setminus D$ contains an open ball $\tilde{B}$, in which we may concentrate the localized potentials via a suitable set $U$. To this end, let $U\subset\overline{\Omega}$ be a relatively open connected set that intersects $\Gamma$, contains $\tilde{B}$, and satisfies $\overline{U}\cap D = \emptyset$. Annotating $\hat{\gamma}_0 = \sigma(\gamma_0,D,\emptyset)$, we now pick $(f_i)\subset L^2_\diamond(\Gamma)$ via Lemma~\ref{lemma:locpot} and Lemma~\ref{lemma:locpot2} such that the corresponding potentials $\hat{u}_i = E(\gamma_0,D)u_{f_i}^{\hat{\gamma}_0}$ and $u_i = u_{f_i}^{\gamma_0}$ satisfy
	\begin{equation*}
	\lim_{i\to\infty}\int_D \abs{\nabla \hat{u}_i}^2\,\di x \leq \lim_{i\to\infty}\int_{\Omega\setminus U} \abs{\nabla \hat{u}_i}^2\,\di x = 0, \qquad \lim_{i\to\infty}\int_B\abs{\nabla u_i}^2\,\di x \geq \lim_{i\to\infty}\int_{\tilde{B}}\abs{\nabla u_i}^2\,\di x = \infty. 
	\end{equation*}
	By Corollary~\ref{coro:mononew} and \eqref{eq:DNfrechet}, we have
	\begin{equation*}
	\inner{(\Lambda_0(D) - \Lambda(\gamma_0) + \beta \DLambda(\gamma_0;\chi_B))f_i,f_i} \leq \sup_D(\gamma_0)\int_D \abs{\nabla \hat{u}_i}^2\,\di x - \beta\int_B \abs{\nabla u_i}^2\,\di x \to -\infty,
	\end{equation*}
	and thus $B\not\subset D \Rightarrow \Lambda_0(D) \not\geq \Lambda(\gamma_0) - \beta \DLambda(\gamma_0;\chi_B)$.

        \bigskip
        \bigskip
        
	Now consider the second ``if and only if'' statement, that is, ``$B\subset D \Leftrightarrow \Lambda(\gamma_0) + \beta \DLambda(\gamma_0;\chi_B) \geq \Lambda_\infty(D)$''. 
	
	\subsubsection*{Proof of ``$\Rightarrow$''} Assume $B\subset D$ and $\alpha \geq 1+\alpha_0$. Lemma~\ref{lemma:mono} and \eqref{eq:DNfrechet} imply
	\begin{align*}
	\inner{(\Lambda(\gamma_0) + \beta\DLambda(\gamma_0;\chi_B) - \Lambda_\alpha(D))f,f} & \geq \int_D \left[(1-\alpha^{-1})\gamma_0 - \beta\chi_B\right]\abs{\nabla u_f^{\gamma_0}}^2\,\di x \\
	&\geq \frac{\alpha_0}{1+\alpha_0} \int_B \left[\gamma_0 - \inf(\gamma_0)\right] \abs{\nabla u_f^{\gamma_0}}^2\,\di x \geq 0
	\end{align*}
	for all $f\in L^2_\diamond(\Gamma)$. 
        Taking the limit $\alpha\to\infty$ leads to $B\subset D \Rightarrow \Lambda(\gamma_0) + \beta \DLambda(\gamma_0;\chi_B) \geq \Lambda_\infty(D)$.
	
	\subsubsection*{Proof of ``$\Leftarrow$''} Now assume $B\not\subset D$ and let us localize $u_i$ in the same manner as in the insulating case, i.e.
	\begin{equation*}
	\lim_{i\to\infty}\int_D \abs{\nabla u_i}^2\,\di x \leq \lim_{i\to\infty}\int_{\Omega\setminus U} \abs{\nabla u_i}^2\,\di x = 0, \qquad \lim_{i\to\infty}\int_B\abs{\nabla u_i}^2\,\di x \geq \lim_{i\to\infty}\int_{\tilde{B}}\abs{\nabla u_i}^2\,\di x = \infty. 
	\end{equation*}
	Corollary~\ref{coro:mononew} and \eqref{eq:DNfrechet} thus yield
	\begin{equation*}
	\inner{(\Lambda(\gamma_0) + \beta\DLambda(\gamma_0;\chi_B) - \Lambda_\infty(D))f_i,f_i} \leq K\int_{D} \abs{\nabla u_i}^2\,\di x - \beta \int_B\abs{\nabla u_i}^2\,\di x \to -\infty,
	\end{equation*}
	and we conclude that $B\not\subset D \Rightarrow \Lambda(\gamma_0) + \beta\DLambda(\gamma_0;\chi_B)\not\geq \Lambda_\infty(D)$.

        \bigskip
        
	Since all above results hold for any $\alpha_0 > 0$, we may actually choose $0 < \beta \leq \inf(\gamma_0)$ as in the theorem statement by letting $\alpha_0\to\infty$.~\hfill $\square$

	\subsection{Proof of Theorem~\ref{thm:monoslow}}
	
	Let us consider the first ``if and only if'' statement. Hence, we assume $0 < \beta < \inf(\gamma_0)$ and choose $0 < \alpha_0 < 1$ to be small enough so that $\alpha_0 \gamma_0 \leq \gamma_0-\beta \chi_B$. 
	
	\subsubsection*{Proof of ``$\Rightarrow$''} Assume $B \subset D$, which implies $\gamma_{\alpha,D} \leq \gamma_0-\beta\chi_B$ for $0 < \alpha \leq \alpha_0$. Using the monotonicity principles in Lemma \ref{lemma:mono}, we directly have $\Lambda_\alpha (D) \geq \Lambda (\gamma_0 - \beta\chi_B)$ for all $0<\alpha \leq \alpha_0$. Letting $\alpha\to 0 $ gives $B\subset D \Rightarrow \Lambda_0(D) \geq \Lambda(\gamma_0-\beta\chi_B)$.
	
	\subsubsection*{Proof of ``$\Leftarrow$''} Now assume $B\not\subset D$.  We pick $U$ and $\tilde{B}$ as in the proof of Theorem~\ref{thm:monofast} and localize $(\hat{u}_i)$ in precisely the same manner as for the insulating case in the proof of Theorem~\ref{thm:monofast}:
	\begin{equation*}
	\lim_{i\to\infty}\int_{\Omega\setminus U} \abs{\nabla \hat{u}_i}^2\,\di x = 0 \qquad\text{and}\qquad \lim_{i\to\infty}\int_{\tilde{B}}\abs{\nabla \hat{u}_i}^2\,\di x = \infty. 
	\end{equation*}
	Hence, by Corollary~\ref{coro:mononew},
	\begin{align*}
	\inner{(\Lambda_0(D)-\Lambda(\gamma_0-\beta\chi_B))f_i,f_i} &\leq -\beta\int_{B\setminus D} \abs{\nabla \hat{u}_i}^2\,\di x + \int_D (\gamma_0 - \beta\chi_B)\abs{\nabla \hat{u}_i}^2\,\di x \\
	&\leq -\beta\int_{\tilde{B}} \abs{\nabla \hat{u}_i}^2\,\di x + \sup_D(\gamma_0-\beta\chi_B)\int_{\Omega\setminus U} \abs{\nabla \hat{u}_i}^2\,\di x.
	\end{align*}
	Letting $i\to\infty$ implies $B\not\subset D\Rightarrow \Lambda_0(D) \not\geq\Lambda(\gamma_0-\beta\chi_B$).

        \bigskip
        \bigskip
        
	Let us then consider the second ``if and only if'' statement. We therefore assume $\beta >0$ and choose $\alpha_0 > 0$ large enough so that $\alpha_0\gamma_0 \geq \gamma_0+\beta \chi_B$. 
	
	\subsubsection*{Proof of ``$\Rightarrow$''} Assume $B \subset D$, which implies $\gamma_0+\beta\chi_B \leq \gamma_{\alpha,D}$ for $\alpha\geq \alpha_0$. Due to the monotonicity principles in Lemma~\ref{lemma:mono}, it follows that $\Lambda (\gamma_0+\beta\chi_B) \geq \Lambda_{\alpha} (D)$ for all $\alpha \geq \alpha_0$, so letting $\alpha\to \infty$ gives $B\subset D \Rightarrow \Lambda(\gamma_0+\beta\chi_B) \geq \Lambda_\infty(D)$.
	
	\subsubsection*{Proof of ``$\Leftarrow$''} Now assume $B\not\subset D$ and localize $(u_i)$ as in the proof of Theorem~\ref{thm:monofast}, i.e.
	\begin{equation*}
	\lim_{i\to\infty}\int_{\Omega\setminus U} \abs{\nabla u_i}^2\,\di x = 0 \qquad\text{and}\qquad \lim_{i\to\infty}\int_{\tilde{B}}\abs{\nabla u_i}^2\,\di x = \infty. 
	\end{equation*}
	Once again we apply Corollary~\ref{coro:mononew} to write
	\begin{align*}
	\inner{(\Lambda(\gamma_0+\beta\chi_B)-\Lambda_{\infty}(D))f_i,f_i} &\leq -\beta\int_B \frac{\gamma_0}{\gamma_0+\beta}\abs{\nabla u_i}^2\,\di x + K\int_D\abs{\nabla u_i}^2\,\di x \\
	&\leq -\beta\inf_B\left(\frac{\gamma_0}{\gamma_0+\beta}\right)\int_{\tilde{B}} \abs{\nabla u_i}^2\,\di x + K\int_{\Omega\setminus U} \abs{\nabla u_i}^2\,\di x.
	\end{align*}
	Letting $i\to\infty$ implies $B\not\subset D\Rightarrow \Lambda(\gamma_0+\beta\chi_B)\not\geq \Lambda_{\infty}(D)$, thereby concluding the proof.~\hfill $\square$
	
	\section{Proof of Theorem~\ref{thm:general}} \label{sec:monomethodindefinite}
	
	\subsubsection*{Proof of ``$\Rightarrow$''} 
	
	Define the $\epsilon$-truncation of $\gamma$, with $\epsilon > 0$, as
	\begin{equation*}
		\gamma_\epsilon := \begin{cases}
			\epsilon\gamma_0 & \text{in } D_0, \\
			\epsilon^{-1}\gamma_0 & \text{in } D_\infty, \\
			\gamma_- & \text{in } D_\textup{F}^-, \\
			\gamma_+ & \text{in } D_\textup{F}^+, \\
			\gamma_0 & \text{in } \Omega\setminus D.
		\end{cases}
	\end{equation*}
	In particular, we have $\Lambda(\gamma_{\epsilon}) \to \Lambda(\gamma)$ in $\mathscr{L}(L^2_\diamond(\Gamma))$ as $\epsilon\to 0$ by Theorem~\ref{thm:uconverge}. In the following we assume $0 < \epsilon_0 < 1$ is small enough so that $\epsilon_0\gamma_0 \leq \gamma$ in $D_\textup{F}^-$ and $\epsilon_0^{-1}\gamma_0 \geq \gamma$ in $D_\textup{F}^+$.
	
	Assume $D\subseteq C$ and let $0<\epsilon\leq \epsilon_0$. Then it holds (cf.~\eqref{eq:truncgamma})
	\begin{equation*}
	\gamma_{\epsilon,C} \leq \gamma_{\epsilon} \leq \gamma_{\epsilon^{-1},C}.
	\end{equation*}
	In particular, by virtue of the monotonicity principles in Lemma~\ref{lemma:mono}, we have $\Lambda_{\epsilon}(C) \geq \Lambda(\gamma_{\epsilon}) \geq \Lambda_{\epsilon^{-1}}(C)$ for all $0<\epsilon\leq \epsilon_0$. Letting $\epsilon\to 0$ gives $D\subseteq C \Rightarrow \Lambda_0(C) \geq \Lambda(\gamma) \geq \Lambda_\infty(C)$.
	
	\subsubsection*{Proof of ``$\Leftarrow$''} Now assume $D\not\subseteq C$. Due to Definition \ref{def:indefinite}, there exist an open ball $B\subset D\setminus C$ and a relatively open connected set $U\subset\overline{\Omega}$ such that $U$ intersects $\Gamma$,  $B\subset U$, $\overline{U}\cap C = \emptyset$, and one of the following four options holds:
	\begin{alignat*}{3}
		\text{(a)}&\text{: } \overline{U}\cap (D\setminus D_\textup{F}^+) &= \emptyset, \qquad \text{(b)}&\text{: } \overline{U}\cap (D\setminus D_\textup{F}^-)\, &= \emptyset,\\
		\text{(c)}&\text{: } \overline{U}\cap (D\setminus D_\infty)\, &= \emptyset, \qquad \text{(d)}&\text{: } \overline{U}\cap (D\setminus D_0) &= \emptyset. 
	\end{alignat*}
	We will consider these four cases separately. However, first we need to introduce some auxiliary conductivity coefficients and potentials. 
	
	Let us define $\gamma^\textup{F} \in L^\infty_+(\Omega)$ as
	\begin{equation*}
		\gamma^\textup{F} := \begin{cases}
			\gamma_- & \text{in } D_\textup{F}^-, \\
			\gamma_+ & \text{in } D_\textup{F}^+, \\
			\gamma_0 & \text{in } \Omega\setminus (D_\textup{F}^- \cup D_\textup{F}^+)
		\end{cases}
	\end{equation*}
	and introduce six (extreme) conductivity coefficients:
	\begin{alignat*}{2}
		\gamma_1 &= \sigma(\gamma_0,C,\emptyset), \qquad\quad\qquad & \\
		\gamma_2 &= \sigma(\gamma_0,D_0,\emptyset), &\gamma_{2}^\textup{F} &= \sigma(\gamma^\textup{F},D_0,\emptyset), \\
		\gamma_3 &= \sigma(\gamma_0,\emptyset,D_\infty),   &\gamma_{3}^\textup{F} &= \sigma(\gamma^\textup{F},\emptyset,D_\infty), \\
		\gamma_4 &= \sigma(\gamma_0,D_0,D_\infty).
	\end{alignat*}
	Furthermore, for $(f_i), (g_i) \subset L^2_\diamond(\Gamma)$, we define eight auxiliary potential sequences via
	\begin{alignat*}{2}
		u_{0,i} &= u_{f_i}^{\gamma_0}, \qquad\qquad\qquad\qquad &\tilde{u}_{0,i} &= u_{g_i}^{\gamma_0}, \\
		u_{1,i} &= E(\gamma_0,C) u_{f_i}^{\gamma_1}, &\tilde{u}_{1,i} &= E(\gamma_0,C) u_{g_i}^{\gamma_1}, \\
		u_{2,i} &= E(\gamma_0,D_0)u_{f_i}^{\gamma_2}, &\tilde{u}_{2,i} &= E(\gamma^\textup{F},D_0)u_{g_i}^{\gamma_2^\textup{F}}, \\
		u_{3,i} &= u_{f_i}^{\gamma_3}, & \tilde{u}_{3,i} &= u_{g_i}^{\gamma_3^\textup{F}}, \\
		u_{4,i} &= u_{f_i}^{\gamma_4}, &\tilde{u}_{4,i} &= u_{g_i}^{\gamma^\textup{F}}.
	\end{alignat*}

        In cases (a) and (b) we pick $(f_i)$, via Lemma~\ref{lemma:locpot} and Lemma~\ref{lemma:locpot2}, such that for $j\in\{0,1,2,3,4\}$ it holds
	\begin{equation} \label{eq:caseABlimit}
		\lim_{i\to\infty}\int_{\Omega\setminus U} \abs{\nabla u_{j,i}}^2\,\di x = 0 \qquad \text{and} \qquad \lim_{i\to\infty}\int_{B} \abs{\nabla u_{j,i}}^2\,\di x = \infty. 
	\end{equation}
	Recall that $\gamma^\textup{F}$ equals $\gamma_0$ away from $D_\textup{F}^\pm$ and thus satisfies the UCP there. According to~\cite[Lemma~3.7]{Harrach13},  $(\tilde{u}_{0,i})$ and $(\tilde{u}_{4,i})$ can thus be localized in $B$ along the set $U$ by resorting to the same sequence of current densities $(g_i)$ in cases (c) and (d). In consequence, we may pick $(g_i)$, via Lemma~\ref{lemma:locpot} and Lemma~\ref{lemma:locpot2}, such that
	\begin{equation} \label{eq:caseCDlimit}
	\lim_{i\to\infty}\int_{\Omega\setminus U} \abs{\nabla \tilde{u}_{j,i}}^2\,\di x = 0 \qquad \text{and} \qquad \lim_{i\to\infty}\int_{B} \abs{\nabla \tilde{u}_{j,i}}^2\,\di x = \infty 
	\end{equation}
	holds for $j\in \{0,1,2,4\}$ in case (c) and for $j\in\{0,1,3,4\}$ in case (d).
	
	We will go in turns through the four cases (a)--(d) and contradict one of the two inequalities in Theorem~\ref{thm:general}. More precisely, cases (b) and (d) contradict the first inequality, whereas cases (a) and (c) contradict the second inequality.
	
	\subsection*{Case (a)} 
	
	As noted above, the aim is to contradict the second inequality in Theorem~\ref{thm:general}. To this end, we write
	\begin{align}
		\inner{(\Lambda(\gamma)-\Lambda_\infty(C))f_i,f_i} &= \inner{(\Lambda(\gamma)-\Lambda(\gamma_4))f_i,f_i} + \inner{(\Lambda(\gamma_4)-\Lambda(\gamma_2))f_i,f_i} \notag\\
		&\hphantom{={}} + \inner{(\Lambda(\gamma_2)-\Lambda(\gamma_0))f_i,f_i} + \inner{(\Lambda(\gamma_0)-\Lambda_\infty(C))f_i,f_i}. \label{eq:casea}
	\end{align}
	 We now apply (i), (ii), (iii), and (ii) of Lemma~\ref{lemma:monoappendix}, respectively, to the four terms on the right-hand side of \eqref{eq:casea} to arrive at
	\begin{align}
	\inner{(\Lambda(\gamma)-\Lambda(\gamma_4))f_i,f_i} &\leq \int_{D_\textup{F}^-}\frac{\gamma_0}{\gamma_-}(\gamma_0 - \gamma_-)\abs{\nabla u_{4,i}}^2\,\di x + \int_{D_\textup{F}^+}\frac{\gamma_0}{\gamma_+}(\gamma_0 - \gamma_+)\abs{\nabla u_{4,i}}^2\,\di x, \label{eq:caseaest1} \\
	\inner{(\Lambda(\gamma_4)-\Lambda(\gamma_2))f_i,f_i} &\leq -\inf_{D_\infty}(\gamma_0)\int_{D_\infty} \abs{\nabla u_{2,i}}^2\, \di x, \label{eq:caseaest2}\\
	\inner{(\Lambda(\gamma_2)-\Lambda(\gamma_0))f_i,f_i} &\leq \sup_{D_0}(\gamma_0)\int_{D_0} \abs{\nabla u_{2,i}}^2\,\di x, \label{eq:caseaest3} \\
	\inner{(\Lambda(\gamma_0)-\Lambda_\infty(C))f_i,f_i} &\leq K\int_{C_\infty} \abs{\nabla u_{0,i}}^2\,\di x. \label{eq:caseaest4}
	\end{align}
	Using \eqref{eq:caseABlimit}, it follows that \eqref{eq:caseaest2}--\eqref{eq:caseaest4} tend to zero as $i\to\infty$. Since $\gamma_+ \geq \gamma_0$ in $D_\textup{F}^+ \supset B$ and $\gamma_- \leq \gamma_0$ in $D_\textup{F}^- \subset \Omega\setminus\overline{U}$, we obtain from \eqref{eq:caseaest1} that
	\begin{align} \label{eq:caseaest1new}
		\inner{(\Lambda(\gamma)-\Lambda(\gamma_4))f_i,f_i} &\leq \sup_{D_\textup{F}^-}\left( \frac{\gamma_0}{\gamma_-}(\gamma_0 - \gamma_-) \right)\int_{\Omega\setminus U}\abs{\nabla u_{4,i}}^2\,\di x \notag\\
		&\hphantom{\leq{}}- \inf_{D_\textup{F}^+}\left(\frac{\gamma_0}{\gamma_+}(\gamma_+ - \gamma_0)\right)\int_{B}\abs{\nabla u_{4,i}}^2\,\di x. 
	\end{align}
	According to Definition~\ref{def:indefinite}, it holds that $\inf_{D_\textup{F}^+}(\gamma_+-\gamma_0) > 0$. Since $\gamma_0\in L^\infty_+(\Omega)$ and $\gamma_\pm\in L^\infty_+(D_\textup{F}^\pm)$, \eqref{eq:caseaest1new} and \eqref{eq:caseABlimit} yield
	\begin{equation*}
		\lim_{i\to\infty}\inner{(\Lambda(\gamma)-\Lambda_\infty(C))f_i,f_i} = \lim_{i\to\infty}\inner{(\Lambda(\gamma)-\Lambda(\gamma_4))f_i,f_i}=-\infty.
	\end{equation*}
	
	\subsection*{Case (b)}
	
	In this case, the goal is to contradict the first inequality in Theorem~\ref{thm:general}. For this purpose, we write
	\begin{align}
		\inner{(\Lambda_0(C)-\Lambda(\gamma))f_i,f_i} &= \inner{(\Lambda_0(C)-\Lambda(\gamma_0))f_i,f_i} + \inner{(\Lambda(\gamma_0)-\Lambda(\gamma_3))f_i,f_i} \notag\\
		&\hphantom{={}}+ \inner{(\Lambda(\gamma_3)-\Lambda(\gamma_4))f_i,f_i} + \inner{(\Lambda(\gamma_4)-\Lambda(\gamma))f_i,f_i}. \label{eq:caseb}
	\end{align}
        We now apply (iii), (ii), (iii), and (i) of Lemma~\ref{lemma:monoappendix}, respectively, to the four terms on the right-hand side of \eqref{eq:caseb} to conclude
	\begin{align}
		\inner{(\Lambda_0(C)-\Lambda(\gamma_0))f_i,f_i} &\leq \sup_C(\gamma_0)\int_{C} \abs{\nabla u_{1,i}}^2\,\di x, \label{eq:casebest1} \\
		\inner{(\Lambda(\gamma_0)-\Lambda(\gamma_3))f_i,f_i} &\leq K\int_{D_\infty} \abs{\nabla u_{0,i}}^2\,\di x, \label{eq:casebest2}\\
		\inner{(\Lambda(\gamma_3)-\Lambda(\gamma_4))f_i,f_i} &\leq -\inf_{D_0}(\gamma_0)\int_{D_0} \abs{\nabla u_{3,i}}^2\,\di x, \label{eq:casebest3} \\
		\inner{(\Lambda(\gamma_4)-\Lambda(\gamma))f_i,f_i} &\leq \int_{D_\textup{F}^-} (\gamma_- - \gamma_0) \abs{\nabla u_{4,i}}^2\,\di x + \int_{D_\textup{F}^+} (\gamma_+ - \gamma_0) \abs{\nabla u_{4,i}}^2\,\di x. \label{eq:casebest4}
	\end{align}
	Due to \eqref{eq:caseABlimit}, it is clear that \eqref{eq:casebest1}--\eqref{eq:casebest3} tend to zero as $i\to\infty$. Since $\gamma_- \leq \gamma_0$ in $D_\textup{F}^-  \supset B$ and $\gamma_+ \geq \gamma_0$ in $D_\textup{F}^+  \subset \Omega\setminus\overline{U}$, the estimate \eqref{eq:casebest4} leads to
	\begin{equation} \label{eq:casebest4new}
	\inner{(\Lambda(\gamma_4)-\Lambda(\gamma))f_i,f_i} \leq \sup_{D_\textup{F}^-}(\gamma_- - \gamma_0)\int_{B} \abs{\nabla u_{4,i}}^2\,\di x + \sup_{D_\textup{F}^+}(\gamma_+ - \gamma_0)\int_{\Omega\setminus U}  \abs{\nabla u_{4,i}}^2\,\di x.
	\end{equation}
	According to Definition~\ref{def:indefinite}, it holds that $\sup_{D_\textup{F}^-}(\gamma_- - \gamma_0) < 0$. As a consequence, \eqref{eq:casebest4new} and \eqref{eq:caseABlimit} guarantee that 
	\begin{equation*}
		\lim_{i\to\infty}\inner{(\Lambda_0(C)-\Lambda(\gamma))f_i,f_i} = \lim_{i\to\infty}\inner{(\Lambda(\gamma_4)-\Lambda(\gamma))f_i,f_i}=-\infty.
	\end{equation*}
	
	\subsection*{Case (c)} 
	This time around we will contradict the second inequality in Theorem~\ref{thm:general}. We start by writing
	\begin{align}
	\inner{(\Lambda(\gamma)-\Lambda_\infty(C))g_i,g_i} &= \inner{(\Lambda(\gamma)-\Lambda(\gamma_2^\textup{F}))g_i,g_i} + \inner{(\Lambda(\gamma_2^\textup{F})-\Lambda(\gamma^\textup{F}))g_i,g_i} \notag\\
	&\hphantom{={}} + \inner{(\Lambda(\gamma^\textup{F})-\Lambda(\gamma_0))g_i,g_i} + \inner{(\Lambda(\gamma_0)-\Lambda_\infty(C))g_i,g_i}. \label{eq:casec}
	\end{align}
        Resorting once again to Lemma~\ref{lemma:monoappendix}, we apply its subresults (ii), (iii), (i), and (ii), respectively, to the four terms on the right-hand side of \eqref{eq:casec} to demonstrate that
	\begin{align}
	\inner{(\Lambda(\gamma)-\Lambda(\gamma_2^\textup{F}))g_i,g_i} &\leq -\inf_{D_\infty}(\gamma_0)\int_{D_\infty} \abs{\nabla \tilde{u}_{2,i}}^2\, \di x, \label{eq:casecest1}\\
	\inner{(\Lambda(\gamma_2^\textup{F})-\Lambda(\gamma^\textup{F}))g_i,g_i} &\leq \sup_{D_0}(\gamma_0)\int_{D_0} \abs{\nabla \tilde{u}_{2,i}}^2\,\di x, \label{eq:casecest2} \\
	\inner{(\Lambda(\gamma^\textup{F})-\Lambda(\gamma_0))g_i,g_i} & \leq \sup_{D_\textup{F}^- \cup D_\textup{F}^+}(\gamma_0-\gamma^\textup{F})\int_{D_\textup{F}^-\cup D_\textup{F}^+} \abs{\nabla \tilde{u}_{4,i}}^2\,\di x, \label{eq:casecest3} \\
	\inner{(\Lambda(\gamma_0)-\Lambda_\infty(C))g_i,g_i} &\leq K\int_{C_\infty} \abs{\nabla \tilde{u}_{0,i}}^2\,\di x. \label{eq:casecest4}
	\end{align}
	Since $D_\textup{F}^\pm \subset \Omega\setminus\overline{U}$, it follows directly from \eqref{eq:caseCDlimit} that \eqref{eq:casecest2}--\eqref{eq:casecest4} tend to zero and \eqref{eq:casecest1} goes to minus infinity as $i\to\infty$, i.e.
	\begin{equation*}
	\lim_{i\to\infty}\inner{(\Lambda(\gamma)-\Lambda_\infty(C))g_i,g_i} = \lim_{i\to\infty}\inner{(\Lambda(\gamma)-\Lambda(\gamma_2^\textup{F}))g_i,g_i}=-\infty.
	\end{equation*}
	
	\subsection*{Case (d)}
	
	In this final case, we will contradict the first inequality in Theorem~\ref{thm:general}. To achieve this objective, we first write
	\begin{align}
	\inner{(\Lambda_0(C)-\Lambda(\gamma))g_i,g_i} &= \inner{(\Lambda_0(C)-\Lambda(\gamma_0))g_i,g_i} + \inner{(\Lambda(\gamma_0)-\Lambda(\gamma^\textup{F}))g_i,g_i}  \notag\\
	&\hphantom{={}}+ \inner{(\Lambda(\gamma^\textup{F})-\Lambda(\gamma_3^\textup{F}))g_i,g_i} + \inner{(\Lambda(\gamma_3^\textup{F})-\Lambda(\gamma))g_i,g_i}. \label{eq:cased}
	\end{align}
        Analogously to the previous three cases, we apply (iii), (i), (ii), and (iii) of Lemma~\ref{lemma:monoappendix}, in this order, to the four terms on the right-hand side of \eqref{eq:cased} to reach the estimates
	\begin{align}
	\inner{(\Lambda_0(C)-\Lambda(\gamma_0))g_i,g_i} &\leq \sup_C(\gamma_0)\int_{C} \abs{\nabla \tilde{u}_{1,i}}^2\,\di x, \label{eq:casedest1} \\
	\inner{(\Lambda(\gamma_0)-\Lambda(\gamma^\textup{F}))g_i,g_i} & \leq \sup_{D_\textup{F}^- \cup D_\textup{F}^+}(\gamma^\textup{F} - \gamma_0)\int_{D_\textup{F}^-\cup D_\textup{F}^+} \abs{\nabla \tilde{u}_{0,i}}^2\,\di x, \label{eq:casedest2} \\
	\inner{(\Lambda(\gamma^\textup{F})-\Lambda(\gamma_3^\textup{F}))g_i,g_i} &\leq K\int_{D_\infty} \abs{\nabla \tilde{u}_{4,i}}^2\,\di x, \label{eq:casedest3}\\
	\inner{(\Lambda(\gamma_3^\textup{F})-\Lambda(\gamma))g_i,g_i} &\leq -\inf_{D_0}(\gamma_0)\int_{D_0} \abs{\nabla \tilde{u}_{3,i}}^2\,\di x. \label{eq:casedest4}
	\end{align}
	Since $D_\textup{F}^\pm \subset \Omega\setminus\overline{U}$, it follows directly from \eqref{eq:caseCDlimit} that \eqref{eq:casedest1}--\eqref{eq:casedest3} tend to zero and \eqref{eq:casedest4} converges to minus infinity as $i\to\infty$, leading to the sought-for conclusion
	\begin{equation*}
	\lim_{i\to\infty}\inner{(\Lambda_0(C)-\Lambda(\gamma))g_i,g_i} = \lim_{i\to\infty}\inner{(\Lambda(\gamma_3^\textup{F})-\Lambda(\gamma))g_i,g_i}=-\infty.
	\end{equation*}
	%

        
	The results for the cases (a)--(d) complete the proof of ``$\Leftarrow$'', that is, $D\not\subseteq C$ indeed implies either $\Lambda_0(C) \not\geq \Lambda(\gamma)$ or $\Lambda(\gamma) \not\geq \Lambda_\infty(C)$.
	
	To conclude the proof of Theorem~\ref{thm:general}, and simultaneously the whole paper, the established ``if and only if'' statement guarantees $D\subseteq C$ for all sets $C$ belonging to
	\begin{equation*}
		\mathcal{M}:=\{ C\in\mathcal{A} \mid \Lambda_0(C) \geq \Lambda(\gamma) \geq \Lambda_\infty(C) \}.
	\end{equation*}
	Since $D$ itself is also a member of $\mathcal{M}$, this proves $D = \cap \mathcal{M}$.~\hfill $\square$
	
	\subsection*{Acknowledgments} 
	
	This work was supported by the Academy of Finland (decision 312124) and the Aalto Science Institute (AScI). The authors thank Antti Hannukainen (Aalto University) and Arne Jensen (Aalborg University) for useful discussions.
	
	\appendix
	
	\section{Some monotonicity estimates} \label{sec:appendixA}
	
	In this appendix, we give some generic monotonicity estimates that are extensively utilized in the proof Theorem~\ref{thm:general}. For more information on the notation and definitions, we refer to Section~\ref{sec:continuum}. See also Corollary~\ref{coro:extended} for the definition of the extension operator into the perfectly insulating parts.
	
	In the following $f\in L^2_\diamond(\Gamma)$ is arbitrary but fixed. We denote $u_{\varsigma,C_0,C_\infty} = E(\varsigma,C_0) u_f^\sigma$ with $\sigma = \sigma(\varsigma,C_0,C_\infty)$, and we analogously set $\Lambda_{\varsigma,C_0,C_\infty} = \Lambda(\sigma)$. To motivate such notation, take note that we will consider several cases where $C_0$, $C_\infty$, or both are empty sets and we also choose $\varsigma$ in several ways.
	\begin{lemma} \label{lemma:monoappendix}
		Let $C = C_0\cup C_\infty$ satisfy Assumption~\ref{assump} and $\varsigma,\varsigma_1,\varsigma_2\in L^\infty_+(\Omega)$. 
		\begin{enumerate}[(i)]
			\item Different background conductivity, but with the same extreme inclusions:
			\begin{equation*}
				\int_{\Omega\setminus C} \frac{\varsigma_2}{\varsigma_1}(\varsigma_1-\varsigma_2)\abs{\nabla u_{\varsigma_2,C_0,C_\infty}}^2\,\di x \leq \inner{(\Lambda_{\varsigma_2,C_0,C_\infty}-\Lambda_{\varsigma_1,C_0,C_\infty})f,f} \leq \int_{\Omega\setminus C} (\varsigma_1 - \varsigma_2)\abs{\nabla u_{\varsigma_2,C_0,C_\infty}}^2\,\di x.
			\end{equation*}
			\item Same background conductivity and perfectly insulating inclusions, but with and without perfectly conducting inclusions:
			\begin{equation*}
			 \int_{C_\infty} \varsigma\abs{\nabla u_{\varsigma,C_0,\emptyset}}^2\,\di x \leq \inner{(\Lambda_{\varsigma,C_0,\emptyset}-\Lambda_{\varsigma,C_0,C_\infty})f,f} \leq K\int_{C_\infty} \abs{\nabla u_{\varsigma,C_0,\emptyset}}^2\,\di x,
			\end{equation*}
			where $K>0$ is independent of $f$.
			\item Same background conductivity and perfectly conducting inclusions, but with and without perfectly insulating inclusions:
			\begin{equation*}
			  \int_{C_0} \varsigma\abs{\nabla u_{\varsigma,\emptyset,C_\infty}}^2\,\di x \leq \inner{(\Lambda_{\varsigma,C_0,C_\infty}-\Lambda_{\varsigma,\emptyset,C_\infty})f,f} \leq \int_{C_0} \varsigma \abs{\nabla u_{\varsigma,C_0,C_\infty}}^2\,\di x. 
			\end{equation*}
		\end{enumerate}
	\end{lemma}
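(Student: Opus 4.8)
My plan rests on the single energy identity
\begin{equation*}
\inner{\Lambda(\sigma)f,f} = \int_{\Omega\setminus C}\varsigma\abs{\nabla u_f^\sigma}^2\,\di x,
\end{equation*}
obtained by testing the weak formulation \eqref{eq:weakform} against $v = u_f^\sigma$ and noting that the integrand is supported on $\Omega\setminus C$ (it vanishes on $C_0$ where $\sigma=0$, and $\nabla u_f^\sigma = 0$ on $C_\infty^\circ$); its polarization supplies the cross-testing identities used throughout, and since $f$ is real and $\Lambda(\sigma)$ self-adjoint, all dual pairings reduce to real energies. Part (i) is then the exact analogue of Lemma~\ref{lemma:mono}: the coefficients $\sigma(\varsigma_1,C_0,C_\infty)$ and $\sigma(\varsigma_2,C_0,C_\infty)$ share the \emph{same} variational space $\mathscr{H}_\diamond(C_0,C_\infty)$, so writing $u_j$ for the restriction of $u_{\varsigma_j,C_0,C_\infty}$ to $\Omega\setminus C_0$ and expanding $\int_{\Omega\setminus C}\varsigma_1\abs{\nabla u_1-\nabla u_2}^2\geq 0$ and substituting the cross-testing identities yields the upper bound, while expanding $\int_{\Omega\setminus C}\varsigma_1^{-1}\abs{\varsigma_1\nabla u_1-\varsigma_2\nabla u_2}^2\geq 0$ yields the lower bound.

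For (ii) I would exploit Proposition~\ref{prop:altcharacterization}, which gives $u_f^{\sigma(\varsigma,C_0,C_\infty)} = P\,u_f^{\sigma(\varsigma,C_0,\emptyset)}$ with $P = P(\varsigma,C_0,C_\infty)$. The Pythagorean theorem in $\inner{\cdot,\cdot}_\varsigma$ then collapses the difference of the two energies to a single projection term,
\begin{equation*}
\inner{(\Lambda_{\varsigma,C_0,\emptyset}-\Lambda_{\varsigma,C_0,C_\infty})f,f} = \norm{u_f^{\sigma(\varsigma,C_0,\emptyset)}}_\varsigma^2 - \norm{P u_f^{\sigma(\varsigma,C_0,\emptyset)}}_\varsigma^2 = \norm{P_\perp u_f^{\sigma(\varsigma,C_0,\emptyset)}}_\varsigma^2.
\end{equation*}
The upper bound is then exactly Lemma~\ref{lemma:perp} applied to $v = u_f^{\sigma(\varsigma,C_0,\emptyset)}$, which solves the conductivity equation in $\Omega\setminus C$. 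For the lower bound I would note that $\nabla P_\perp v = \nabla v$ on $C_\infty^\circ$ (since $\nabla P v = 0$ there) and simply discard the nonnegative contribution from $(\Omega\setminus C_0)\setminus C_\infty$, leaving $\int_{C_\infty}\varsigma\abs{\nabla v}^2$.

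For (iii) the two solutions live on different domains, so the projection shortcut is unavailable and I would return to cross-testing, now using the extension of Corollary~\ref{coro:extended}. Writing $u^c = E(\varsigma,C_0)u_f^{\sigma(\varsigma,C_0,C_\infty)}$ and $u^d = u_f^{\sigma(\varsigma,\emptyset,C_\infty)}$, one checks that $u^d|_{\Omega\setminus C_0}\in\mathscr{H}_\diamond(C_0,C_\infty)$ and $u^c\in\mathscr{H}_\diamond(\emptyset,C_\infty)$, so each may legitimately be inserted into the other's weak formulation, producing $\int_{\Omega\setminus C}\varsigma\nabla u^c\cdot\nabla\overline{u^d} = \inner{\Lambda_{\varsigma,\emptyset,C_\infty}f,f}$ and $\int_{\Omega\setminus C_\infty}\varsigma\nabla u^d\cdot\nabla\overline{u^c} = \inner{\Lambda_{\varsigma,C_0,C_\infty}f,f}$. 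Expanding $\int_{\Omega\setminus C_\infty}\varsigma\abs{\nabla u^c-\nabla u^d}^2\geq 0$ and inserting these identities produces the upper bound $\int_{C_0}\varsigma\abs{\nabla u^c}^2$, whereas expanding the same square over the smaller domain, $\int_{\Omega\setminus C}\varsigma\abs{\nabla u^c-\nabla u^d}^2\geq 0$, produces the lower bound $\int_{C_0}\varsigma\abs{\nabla u^d}^2$; the gap between the two is precisely the $C_0$-energy that distinguishes the two integration domains.

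The main obstacle is the bookkeeping in (iii): one must verify the admissibility of the two cross-test functions in their respective spaces — in particular that the harmonic extension $E(\varsigma,C_0)u_f^{\sigma(\varsigma,C_0,C_\infty)}$ indeed lies in $\mathscr{H}_\diamond(\emptyset,C_\infty)$ and leaves the trace on $\Gamma$ (hence the pairing $\inner{f,\cdot|_\Gamma}$) untouched — and carefully track which energy is integrated over $\Omega\setminus C$ versus $\Omega\setminus C_\infty$, since it is exactly the mismatch of these two domains that generates the two-sided estimate. Everything else reduces to the standard ``complete-the-square'' monotonicity computation.
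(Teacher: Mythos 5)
Your proof is correct, and while part (i) and the upper bounds of (ii) and (iii) follow essentially the paper's own route (the two complete-the-square identities for (i), the identity $\inner{(\Lambda_{\varsigma,C_0,\emptyset}-\Lambda_{\varsigma,C_0,C_\infty})f,f}=\norm{P_\perp u_{\varsigma,C_0,\emptyset}}_\varsigma^2$ combined with Lemma~\ref{lemma:perp} for (ii), and the expansion of $\int_{\Omega\setminus C_\infty}\varsigma\abs{\nabla u^c-\nabla u^d}^2\geq 0$ for (iii)), your treatment of the two \emph{lower} bounds is genuinely different from the paper's. There, the paper introduces $\epsilon$-truncated conductivities ($\epsilon^{-1}\varsigma$ in $C_\infty$ for (ii), $\epsilon\varsigma$ in $C_0$ for (iii)), applies the non-extreme monotonicity bounds of Lemma~\ref{lemma:mono} resp.\ part (i), and passes to the limit $\epsilon\to 0$ via Theorem~\ref{thm:uconverge}. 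You instead argue directly: for (ii) you observe that $\nabla P_\perp v=\nabla v$ a.e.\ on $C_\infty$ (since $\nabla Pv=0$ there) and discard the nonnegative remainder of $\norm{P_\perp v}_\varsigma^2$ over $(\Omega\setminus C_0)\setminus C_\infty$; for (iii) you expand the same square over the smaller domain $\Omega\setminus C$, and the mismatch between the two integration domains produces exactly the $C_0$-energy of $u^d$ as the lower bound. Both of your arguments check out — the cross-testing identities are legitimate because, as you verify, $u^d|_{\Omega\setminus C_0}\in\mathscr{H}_\diamond(C_0,C_\infty)$ and $E(\varsigma,C_0)u_f^{\sigma(\varsigma,C_0,C_\infty)}\in\mathscr{H}_\diamond(\emptyset,C_\infty)$ with unchanged trace on $\Gamma$, and all pairings are real by self-adjointness of the ND maps (this, rather than realness of $f$, is the operative reason). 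What your route buys is self-containedness: the lemma no longer depends on the convergence machinery of Theorem~\ref{thm:uconverge}, which is aesthetically preferable since in the paper's logical architecture this lemma is itself used to motivate convergence statements (Section~\ref{sec:remarks}, item (iv)). What the paper's route buys is uniformity of method — every bound is traced back to the single classical monotonicity estimate of Lemma~\ref{lemma:mono} plus a limit.
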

	\subsubsection*{Proof of (i)} 
	
	Abbreviate $\sigma_j = \sigma(\varsigma_j,C_0,C_\infty)$, $u_j = u_f^{\sigma_j}$, and $\Lambda_j = \Lambda(\sigma_j)$ for $j\in\{1,2\}$. 
	
	Using the weak forms \eqref{eq:weakform} for $u_1$ and $u_2$ as well as the self-adjointness of $\Lambda_1$, we have
	\begin{align}
		\inner{(\Lambda_2-\Lambda_1)f,f} &= \int_{\Omega\setminus C} \left(\varsigma_1\abs{\nabla u_1}^2 + \varsigma_2\abs{\nabla u_2}^2 - \varsigma_2\nabla u_2\cdot\nabla \overline{u_1} - \varsigma_2\nabla u_1\cdot\nabla \overline{u_2} \right) \di x  \notag\\
		&= \int_{\Omega\setminus C} \varsigma_1 \Big| \nabla u_1 - \frac{\varsigma_2}{\varsigma_1}\nabla u_2\Big|^2 \,\di x + \int_{\Omega\setminus C} \frac{\varsigma_2}{\varsigma_1}(\varsigma_1-\varsigma_2)\abs{\nabla u_2}^2\,\di x.   \label{eq:appest1}
	\end{align}
	Estimating the first term on the right-hand side of \eqref{eq:appest1} from below by zero gives the left-hand inequality in (i).

	Using this time around the self-adjointness of $\Lambda_2$, we obtain
	\begin{align}
	\inner{(\Lambda_2-\Lambda_1)f,f} &= \int_{\Omega\setminus C} \left( (\varsigma_1-\varsigma_2)\abs{\nabla u_2}^2 - \varsigma_1\abs{\nabla u_1}^2 - \varsigma_1\abs{\nabla u_2}^2 + \varsigma_1\nabla u_1\cdot\nabla\overline{u_2} + \varsigma_1\nabla u_2\cdot\nabla \overline{u_1} \right) \di x  \notag\\
	&= \int_{\Omega\setminus C} (\varsigma_1-\varsigma_2)\abs{\nabla u_2}^2 \,\di x - \int_{\Omega\setminus C} \varsigma_1 \abs{\nabla (u_1-u_2)}^2\,\di x.   \label{eq:appest2}
	\end{align}
	Estimating the second term on the right-hand side of \eqref{eq:appest2} from above by zero gives the right-hand inequality in (i).

	\subsubsection*{Proof of (ii)}
	
	Abbreviate $\sigma_1 = \sigma(\varsigma,C_0,C_\infty)$ and $\sigma_2 = \sigma(\varsigma,C_0,\emptyset)$, and in the same way as above, let $u_j = u_f^{\sigma_j}$ and $\Lambda_j = \Lambda(\sigma_j)$ for $j\in\{1,2\}$. We also abbreviate $P = P(\varsigma,C_0,C_\infty)$ and $P_\perp = P_\perp(\varsigma,C_0,C_\infty)$.
	
	Let us then introduce an $\epsilon$-truncation of $\sigma_1$ in $C_\infty$,
	\begin{equation*}
		\sigma_{\epsilon} = \begin{cases}
			\varsigma &\text{in } \Omega\setminus C, \\
			0 & \text{in } C_0, \\
			\epsilon^{-1}\varsigma &\text{in } C_\infty.
		\end{cases}
	\end{equation*}
	Using the lower bound of Lemma~\ref{lemma:mono}, with $\Omega$ replaced by $\Omega\setminus C_0$, we get
	\begin{equation*}
		(1-\epsilon)\int_{C_\infty} \varsigma \abs{\nabla u_2}^2\, \di x \leq \inner{(\Lambda_2-\Lambda(\sigma_\epsilon))f,f}.
	\end{equation*}
	Due to Theorem~\ref{thm:uconverge}, the left-hand inequality in (ii) follows by taking the limit $\epsilon\to 0$.
	
	To obtain the right-hand inequality in (ii), note first that $u_1 = Pu_2$ due to Proposition~\ref{prop:altcharacterization}. In particular, it holds that $u_2 = u_1 + P_\perp u_2$, and thus the weak formulation for $u_2$ gives
	\begin{align*}
		\inner{(\Lambda_2-\Lambda_1)f,f} &= \inner{f,(\Lambda_2-\Lambda_1)f} = \inner{f,u_2|_\Gamma} - \inner{f,u_1|_\Gamma} = \inner{f,(P_\perp u_2)|_\Gamma} \\
		&= \int_{\Omega\setminus C_0} \varsigma \nabla u_2\cdot \nabla \overline{P_\perp u_2}\,\di x = \int_{\Omega\setminus C_0} \varsigma \abs{\nabla P_\perp u_2}^2\,\di x.
	\end{align*}
	The right-hand inequality of (ii) is now a simple consequence Lemma~\ref{lemma:perp}.
	
	\subsubsection*{Proof of (iii)}
	
	Abbreviate $\sigma_1 = \sigma(\varsigma,C_0,C_\infty)$ and $\sigma_2 = \sigma(\varsigma,\emptyset,C_\infty)$, let $u_1 = E(\varsigma,C_0) u_f^{\sigma_1}$ and $u_2 = u_f^{\sigma_2}$, and introduce the corresponding ND maps $\Lambda_j = \Lambda(\sigma_j)$ for $j\in\{1,2\}$.
	
	To prove the left-hand inequality of (iii), we introduce a truncation of $\sigma_1$ in $C_0$,
	\begin{equation*}
	\sigma_{\epsilon} = \begin{cases}
	\varsigma &\text{in } \Omega\setminus C, \\
	\epsilon\varsigma &\text{in } C_0, \\
	\infty &\text{in } C_\infty.
	\end{cases}
	\end{equation*}
        When there are no perfectly insulating inclusions, the upper bound of (i) gives
	\begin{equation}
		\inner{(\Lambda_2-\Lambda(\sigma_\epsilon))f,f} \leq (\epsilon-1)\int_{C_0} \varsigma\abs{\nabla u_2}^2 \,\di x.
	\end{equation}
	 Due to the convergence result of Theorem~\ref{thm:uconverge}, taking the limit $\epsilon\to 0$ gives the left-hand inequality of (iii). (In fact, it is enough to consider the first term on the right-hand side of \eqref{eq:estu}.)
	
	Using the weak formulations for $u_1$ and $u_2$, along with the self-adjointness of $\Lambda_1$, we have
	\begin{equation*}
	\inner{\Lambda_1 f,f} = \int_{\Omega\setminus C} \varsigma\abs{\nabla u_1}^2\,\di x = \int_{\Omega\setminus C_\infty} \varsigma \nabla u_2\cdot\nabla\overline{u_1}\,\di x = \int_{\Omega\setminus C_\infty} \varsigma \nabla u_1\cdot\nabla\overline{u_2}\,\di x.
	\end{equation*}
	It follows that
	\begin{align}
	\int_{\Omega\setminus C_\infty} \varsigma\abs{\nabla(u_1-u_2)}^2 \,\di x &= \int_{\Omega\setminus C_\infty} \left(\varsigma\abs{\nabla u_1}^2 + \varsigma\abs{\nabla u_2}^2 - \varsigma\nabla u_1\cdot\nabla \overline{u_2} - \varsigma\nabla u_2\cdot\nabla \overline{u_1} \right) \,\di x \notag \\
	&= \inner{(\Lambda_2-\Lambda_1)f,f} + \int_{C_0}\varsigma\abs{\nabla u_1}^2\,\di x. \label{eq:appestfinal}
	\end{align}
	Since the left-hand side of \eqref{eq:appestfinal} is nonnegative, we have proven the right-hand inequality in (iii) and thereby completed the whole proof.~\hfill $\square$
	
	\section{Ambiguity in the perfectly insulating parts} \label{sec:appendixB}
	
	In this appendix, we give a simple example on the limiting behavior of the electric potential when the conductivity coefficient decays to zero in some parts of the domain. The example shows that the restriction of the limit potential to the insulating part of the domain may be different for different conductivity sequences even if they converge to the same limit conductivity.

	
	Let $\Omega$ be the unit disk and let $\Gamma = \partial \Omega$ be the unit circle. We make use of the polar coordinates $(r,\theta)\in (0,1)\times(-\pi,\pi]$ and consider the following two types of radially symmetric conductivity coefficients for $\epsilon > 0$:
	\begin{equation*}
		\sigma_\epsilon(r,\theta) := \begin{cases}
			\epsilon &\textup{for } r\in(0,\frac{1}{2}), \\
			1 &\textup{for } r\in(\frac{1}{2},1),
		\end{cases} \qquad \qquad
		\hat{\sigma}_\epsilon(r,\theta) := \begin{cases}
		\epsilon^2 &\textup{for } r\in(0,\frac{1}{4}), \\
		\epsilon &\textup{for } r\in(\frac{1}{4},\frac{1}{2}), \\
		1 &\textup{for } r\in(\frac{1}{2},1).
		\end{cases}
	\end{equation*}
	If $f = \e^{\I(\cdot)}$ is the applied current density as a function of the polar angle on $\Gamma$, then the corresponding electric potentials can be found via separation of variables:
	\begin{align*}
	u_\epsilon(r,\theta) &= \frac{\e^{\I\theta}}{3+5\epsilon}\begin{cases}
	8r &\textup{for } r\in(0,\frac{1}{2}), \\
	4(1+\epsilon)r + (1-\epsilon)r^{-1} &\textup{for } r\in(\frac{1}{2},1),
	\end{cases} \\[2mm]
	\hat{u}_\epsilon(r,\theta) &= \frac{\e^{\I\theta}}{15+24\epsilon+25\epsilon^2}\begin{cases}
	64r &\textup{for } r\in(0,\frac{1}{4}), \\
	32(1+\epsilon)r + 2(1-\epsilon)r^{-1} &\textup{for } r\in(\frac{1}{4},\frac{1}{2}), \\
	4(5+6\epsilon+5\epsilon^2)r + 5(1-\epsilon^2)r^{-1} &\textup{for } r\in(\frac{1}{2},1).
	\end{cases}
	\end{align*}
	As $\epsilon\to 0$, the resulting limit potentials are obviously 
	\begin{equation*}
	u_0(r,\theta) = \frac{\e^{\I\theta}}{3}\begin{cases}
	8r &\textup{for } r\in(0,\frac{1}{2}), \\
	4r + r^{-1} &\textup{for } r\in(\frac{1}{2},1),
	\end{cases} \qquad
	\hat{u}_0(r,\theta) = \frac{\e^{\I\theta}}{15}\begin{cases}
	64r &\textup{for } r\in(0,\frac{1}{4}), \\
	32r + 2r^{-1} &\textup{for } r\in(\frac{1}{4},\frac{1}{2}), \\
	20r + 5r^{-1} &\textup{for } r\in(\frac{1}{2},1).
	\end{cases}
	\end{equation*}
	It is easy to verify that $u_0$ is the extension introduced in Corollary~\ref{coro:extended}. As expected, $u_0$ and $\hat{u}_0$ agree in the conducting part of the domain, i.e.~for $r\in(\frac{1}{2},1)$. However, they differ in the insulating disk, i.e.~for $r\in (0,\frac{1}{2})$, even though the limits of the corresponding conductivity coefficients $\sigma_\epsilon$ and $\hat{\sigma}_\epsilon$ are identical.
	
	\bibliographystyle{plain}
	\bibliography{minbib}

\begin{thebibliography}{10}

\bibitem{Borcea2002a}
L.~Borcea.
\newblock Electrical impedance tomography.
\newblock {\em Inverse Problems}, 18(6):R99--R136, 2002.

\bibitem{Borcea2002}
L.~Borcea.
\newblock Addendum to ``{E}lectrical impedance tomography".
\newblock {\em Inverse Problems}, 19(4):997--998, 2003.

\bibitem{Borman_2009}
D.~Borman, D.~B. Ingham, B.~T. Johansson, and D.~Lesnic.
\newblock The method of fundamental solutions for detection of cavities in
  {EIT}.
\newblock {\em J.\ Integral Equ.\ Appl.}, 21(3):383--406, 2009.

\bibitem{Brander_2018}
T.~Brander, J.~Ilmavirta, and M.~Kar.
\newblock Superconductive and insulating inclusions for linear and non-linear
  conductivity equations.
\newblock {\em Inverse Probl. Imag.}, 12(1):91--123, 2018.

\bibitem{Brander_2015}
T.~Brander, M.~Kar, and M.~Salo.
\newblock Enclosure method for the p-{L}aplace equation.
\newblock {\em Inverse Problems}, 31(4), 2015.
\newblock Article ID 045001.

\bibitem{BruhlPhD}
M.~Br\"uhl.
\newblock {\em Gebietserkennung in der elektrischen Impedanztomographie}.
\newblock PhD thesis, Universit\"at {K}arlsruhe, 1999.

\bibitem{Bruhl2001}
M.~Br\"uhl.
\newblock Explicit characterization of inclusions in electrical impedance
  tomography.
\newblock {\em SIAM J. Math. Anal.}, 32(6):1327--1341, 2001.

\bibitem{Bruhl2000}
M.~Br\"uhl and M.~Hanke.
\newblock Numerical implementation of two noniterative methods for locating
  inclusions by impedance tomography.
\newblock {\em Inverse Problems}, 16(4):1029--1042, 2000.

\bibitem{Calderon1980}
A.~P. {C}alder{\'o}n.
\newblock On an inverse boundary value problem.
\newblock In {\em Seminar on {N}umerical {A}nalysis and its {A}pplications to
  {C}ontinuum {P}hysics}, pages 65--73. Soc. Brasil. Mat., Rio de Janeiro,
  1980.

\bibitem{Friedman_1989}
A.~Friedman and M.~Vogelius.
\newblock Identification of small inhomogeneities of extreme conductivity by
  boundary measurements: a theorem on continuous dependence.
\newblock {\em Arch.\ Ration.\ Mech.\ Anal.}, 105(4):299--326, 1989.

\bibitem{Garde_2017a}
H.~Garde.
\newblock Comparison of linear and non-linear monotonicity-based shape
  reconstruction using exact matrix characterizations.
\newblock {\em Inverse Probl. Sci. Eng.}, 26(1):33--50, 2018.

\bibitem{Garde_2019b}
H.~Garde.
\newblock Reconstruction of piecewise constant layered conductivities in
  electrical impedance tomography.
\newblock {\em Comm. PDE}, 45(9):1118--1133, 2020.

\bibitem{Garde_2019c}
H.~Garde, N.~Hyv\"onen, and T.~Kuutela.
\newblock On regularity of the logarithmic forward map of electrical impedance
  tomography.
\newblock {\em SIAM J. Math. Anal.}, 52(1):197--220, 2020.

\bibitem{GardeStaboulis_2016}
H.~Garde and S.~Staboulis.
\newblock Convergence and regularization for monotonicity-based shape
  reconstruction in electrical impedance tomography.
\newblock {\em Numer. Math.}, 135(4):1221--1251, 2017.

\bibitem{Garde_2019}
H.~Garde and S.~Staboulis.
\newblock The regularized monotonicity method: detecting irregular indefinite
  inclusions.
\newblock {\em Inverse Probl. Imag.}, 13(1):93--116, 2019.

\bibitem{Gebauer2008b}
B.~Gebauer.
\newblock Localized potentials in electrical impedance tomography.
\newblock {\em Inverse Probl. Imag.}, 2(2):251--269, 2008.

\bibitem{Gebauer2007}
B.~Gebauer and N.~Hyv{\"o}nen.
\newblock Factorization method and irregular inclusions in electrical impedance
  tomography.
\newblock {\em Inverse Problems}, 23(5):2159--2170, 2007.

\bibitem{Hanke2015}
M.~Hanke-Bourgeois and A.~Kirsch.
\newblock Sampling methods.
\newblock In {\em Handbook of Mathematical Methods in Imaging}, pages 591--647.
  Springer, 2015.

\bibitem{Harrach13b}
B.~Harrach.
\newblock Recent progress on the factorization method for electrical impedance
  tomography.
\newblock {\em Comput. Math. Methods Med.}, 2013.
\newblock Article ID 425184.

\bibitem{Harrach_2019}
B.~Harrach.
\newblock Uniqueness and {L}ipschitz stability in electrical impedance
  tomography with finitely many electrodes.
\newblock {\em Inverse Problems}, 35(2), 2019.
\newblock Article ID 024005.

\bibitem{Harrach10}
B.~Harrach and J.~K. Seo.
\newblock Exact shape-reconstruction by one-step linearization in electrical
  impedance tomography.
\newblock {\em SIAM J. Math. Anal.}, 42(4):1505--1518, 2010.

\bibitem{Harrach13}
B.~Harrach and M.~Ullrich.
\newblock Monotonicity-based shape reconstruction in electrical impedance
  tomography.
\newblock {\em SIAM J. Math. Anal.}, 45(6):3382--3403, 2013.

\bibitem{Harrach15}
B.~Harrach and M.~Ullrich.
\newblock Resolution guarantees in electrical impedance tomography.
\newblock {\em IEEE T. Med. Imaging}, 34(7):1513--1521, 2015.

\bibitem{Hyvonen2004}
N.~Hyv{\"o}nen.
\newblock Complete electrode model of electrical impedance tomography:
  Approximation properties and characterization of inclusions.
\newblock {\em SIAM J. Appl. Math.}, 64(3):902--931, 2004.

\bibitem{Ikehata1998a}
M.~Ikehata.
\newblock Size estimation of inclusion.
\newblock {\em J. Inverse Ill-Posed Probl.}, 6(2):127--140, 1998.

\bibitem{Ikehata1999a}
M.~Ikehata.
\newblock How to draw a picture of an unknown inclusion from boundary
  measurements. {T}wo mathematical inversion algorithms.
\newblock {\em J. Inverse Ill-Posed Probl.}, 7(3):255--271, 1999.

\bibitem{Ikehata2000c}
M.~Ikehata.
\newblock Reconstruction of the support function for inclusion from boundary
  measurements.
\newblock {\em J. Inverse Ill-Posed Probl.}, 8(4):367--378, 2000.

\bibitem{Kang1997b}
H.~Kang, J.~K. Seo, and D.~Sheen.
\newblock The inverse conductivity problem with one measurement: stability and
  estimation of size.
\newblock {\em SIAM J. Math. Anal.}, 28(6):1389--1405, 1997.

\bibitem{Kirsch2008}
A.~Kirsch and N.~Grinberg.
\newblock {\em The factorization method for inverse problems}.
\newblock Oxford University Press, USA, 2008.

\bibitem{Kohn1985}
R.~Kohn and M.~Vogelius.
\newblock Determining conductivity by boundary measurements {II}. {I}nterior
  results.
\newblock {\em Comm. Pure Appl. Math.}, 38(5):643--667, 1985.

\bibitem{Kress_2012}
R.~Kress.
\newblock Inverse problems and conformal mapping.
\newblock {\em Complex Var. Elliptic}, 57(2-4):301--316, 2012.

\bibitem{Kress_2005}
R.~Kress and W.~Rundell.
\newblock Nonlinear integral equations and the iterative solution for an
  inverse boundary value problem.
\newblock {\em Inverse Problems}, 21(4):1207--1223, 2005.

\bibitem{Lechleiter2006}
A.~Lechleiter.
\newblock A regularization technique for the factorization method.
\newblock {\em Inverse problems}, 22(5):1605--1625, 2006.

\bibitem{Lechleiter2008b}
A.~Lechleiter.
\newblock {\em Factorization methods for photonics and rough surfaces}.
\newblock KIT Scientific Publishing, 2008.

\bibitem{Lions1972}
J.-L. Lions and E.~Magenes.
\newblock {\em Non-homogeneous boundary value problems and applications, Vol.
  1}.
\newblock Springer--Verlag, New York--Heidelberg, 1972.

\bibitem{Moradifam_2012}
A.~Moradifam, A.~Nachman, and A.~Tamasan.
\newblock Conductivity imaging from one interior measurement in the presence of
  perfectly conducting and insulating inclusions.
\newblock {\em {SIAM} J. Math. Anal.}, 44(6):3969--3990, 2012.

\bibitem{Munnier_2016}
A.~Munnier and K.~Ramdani.
\newblock Conformal mapping for cavity inverse problem: an explicit
  reconstruction formula.
\newblock {\em Appl. Anal.}, 96(1):108--129, 2016.

\bibitem{Necas2012}
J.~Ne\v{c}as.
\newblock {\em Direct methods in the theory of elliptic equations}.
\newblock Springer, Heidelberg, 2012.

\bibitem{SchmittPhD}
S.~Schmitt.
\newblock {\em Detection and Characterization of Inclusions in Impedance
  Tomography}.
\newblock PhD thesis, {K}arlsruher {I}nstituts f\"ur {T}echnologie, 2010.

\bibitem{Tamburrino2002}
A.~Tamburrino and G.~Rubinacci.
\newblock A new non-iterative inversion method for electrical resistance
  tomography.
\newblock {\em Inverse Problems}, 18(6):1809--1829, 2002.

\bibitem{Uhlmann2009}
G.~Uhlmann.
\newblock Electrical impedance tomography and {C}alder\'on's problem.
\newblock {\em Inverse Problems}, 25(12), 2009.
\newblock Article ID 123011.

\end{thebibliography}

\end{document}